\title{Quantitative Strong Convergence for the Hybrid Steepest Descent Method}
\author{ Daniel K\"ornlein\thanks{The 
author has been 
supported by the German Science Foundation (DFG 
Project KO 1737/5-2).}\\[0.2cm]
Department of Mathematics \\  Technische Universit\"at Darmstadt\\ 
Schlossgartenstra\ss{}e 7, 64289 Darmstadt, Germany}
\newcommand{\placeholder}{\delta}
\newtheorem{theorem}{Theorem}[section]
\newtheorem{proposition}[theorem]{Proposition}
\newtheorem{corollary}[theorem]{Corollary}
\newtheorem{lemma}[theorem]{Lemma}
\theoremstyle{definition}
\newtheorem{definition}[theorem]{Definition}
\newtheorem{prop}[theorem]{Proposition}
\newtheorem{prob}[theorem]{Problem}
\newtheorem{notation}[theorem]{Notation}
\theoremstyle{remark}
\newtheorem{remark}[theorem]{Remark}
\newcommand{\NN}{\mathbb{N}}
\newcommand{\RR}{\mathbb{R}}
\newcommand{\F}{\mathcal{F}}
\newcommand{\G}{\mathcal{G}}
\newcommand{\vip}{\operatorname{VIP}}
\newcommand{\fix}{\mathop{Fix}}
\newcommand{\ra}{\rightarrow}
\newcommand{\maj}{\gtrsim}
\newcommand{\diam}{\operatorname{diam}}
\newcommand{\ie}{i.e.~}
\newcommand{\bigand}[2]{\bigwedge\!\!\!\!\!\!\bigwedge_{\!\!\!#1}^{\!\!\!#2}}
\begin{document}

\maketitle

\begin{abstract}
 We provide new complexity information for the convergence of the Hybrid Steepest Descent Method for solving the Variational Inequality Problem for a strict contraction on Hilbert space over a closed convex set $C$ given either as the fixed point set of a single nonexpansive mapping or the intersection of the fixed point sets of a finite family of nonexpansive mappings. More precisely, we give metastability rates in the sense of Tao for those cases. The results in this paper were extracted from a proof due to Yamada using proof-mining techniques, and provide a thorough quantitative analysis of the Hybrid Steepest Descent Method.
\end{abstract}

\section{Introduction}
For a real Hilbert space $H$ and a mapping $\Theta:H\to\RR$, the convex optimization problem for $\Theta$ over some closed convex set $C$ consists in finding a point $x^*$ that minimizes $\Theta$ over $C$. Solving this minimization problem is equivalent to solving the \emph{Variational Inequality Problem} for the gradient $\F:=\Theta'$ over $S$, which is defined as follows:
\begin{equation*}
 \text{Find }u^*\in S\text{ such that }\langle v-u^*,\F u^*\rangle\ge0\text{ for all }v\in S.\tag{$\vip(\F,S)$}
\end{equation*}
Apart from their connection to the convex optimization problem, variational inequalities have numerous applications and have therefore been widely studied in the literature \cite{BarbuPrecupanu,EkelenadIvar(99),ZeidlerIIB,ZeidlerIII}. For an overview of some applications of variational inequalities and the hybrid steepest descent method in particular, we refer the reader to \cite{Yamada(01),YamadaOguraShirakawa(02)}

Apart from existence and uniqueness of solutions, considerable effort has also been put into devising explicit algorithms to compute solutions. The following observation is of central importance for the latter as it transforms the Variational Inequality Problem into a fixed point problem:
\begin{prop}[VIP as a fixed point problem]\label{prop:char}
 Given a mapping $\F:H\to H$ and a nonempty closed and convex set $S$, the following three statements are equivalent.
 \begin{enumerate}[label=(\roman*)]
  \item $u^*\in C$ is a solution to $\vip(\F,S)$, \ie
  \begin{equation*}
   \langle v-u^*,\F(u^*)\rangle\ge0,\quad\textmd{for all }v\in S.
  \end{equation*}
  \item For any $\mu>0$,
  \begin{equation*}
   \langle v-u^*,(u^*-\mu\F(u^*))-u^*\rangle\le0,\quad\textmd{for all }v\in S.
  \end{equation*}
  \item\label{item:characterisation} For any $\mu>0$,
  \begin{equation*}
   u^*\in\fix(P_S(I-\mu\F)).
  \end{equation*}
 \end{enumerate}
\end{prop}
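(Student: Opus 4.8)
The plan is to prove the three implications in a cycle $(i)\Rightarrow(ii)\Rightarrow(iii)\Rightarrow(i)$, using only the defining property of the metric projection $P_S$ onto a nonempty closed convex subset of a Hilbert space, namely that for any $x\in H$ the point $p=P_S(x)$ is characterised by $p\in S$ and $\langle v-p,x-p\rangle\le 0$ for all $v\in S$.

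First, for $(i)\Leftrightarrow(ii)$: this is essentially immediate and requires no projection at all. Given $\mu>0$ and $v\in S$, we simply compute $(u^*-\mu\F(u^*))-u^* = -\mu\F(u^*)$, so that $\langle v-u^*,(u^*-\mu\F(u^*))-u^*\rangle = -\mu\langle v-u^*,\F(u^*)\rangle$. Since $\mu>0$, this quantity is $\le 0$ exactly when $\langle v-u^*,\F(u^*)\rangle\ge 0$. As the rephrasing holds for every fixed $v\in S$, statement $(i)$ and statement $(ii)$ are equivalent, and moreover $(ii)$ for one $\mu>0$ is equivalent to $(ii)$ for all $\mu>0$.

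Next, for $(ii)\Leftrightarrow(iii)$: fix $\mu>0$ and apply the variational characterisation of the projection with $x:=u^*-\mu\F(u^*)$. That characterisation says precisely that $u^*=P_S(u^*-\mu\F(u^*))$ if and only if $u^*\in S$ and $\langle v-u^*,(u^*-\mu\F(u^*))-u^*\rangle\le 0$ for all $v\in S$. Noting that $u^*=P_S(u^*-\mu\F(u^*))$ is the same as $u^*\in\fix(P_S(I-\mu\F))$, and that the hypothesis $u^*\in C$ (equivalently $u^*\in S$; the statement conflates the two closed convex sets) supplies the membership clause, we get $(ii)$ for a given $\mu$ iff $(iii)$ for that $\mu$. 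Combined with the previous paragraph, all three statements are equivalent, and the quantifier ``for any $\mu>0$'' is harmless since each is equivalent to the $\mu$-free statement $(i)$.

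I do not expect any serious obstacle here; the only mild subtlety is bookkeeping around the set named $C$ versus the set named $S$ in the proposition statement (they should be the same set), and making sure the ``for any $\mu>0$'' is interpreted correctly — it does not matter whether one reads it as ``there exists'' or ``for all'' precisely because each of $(ii)$ and $(iii)$ for a single $\mu$ is already equivalent to $(i)$. The whole argument is a two-line computation plus one invocation of the projection inequality, so the write-up will be short.
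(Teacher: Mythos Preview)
Your argument is correct and is the standard one. The paper itself does not give a proof of this proposition; it is stated as a known characterisation and used without proof, so there is nothing to compare against beyond noting that your cycle $(i)\Leftrightarrow(ii)\Leftrightarrow(iii)$ via the identity $(u^*-\mu\F(u^*))-u^*=-\mu\F(u^*)$ and the variational characterisation of $P_S$ is exactly the expected justification. Your remark about the $C$ versus $S$ typo and the harmlessness of the quantifier over $\mu$ is also accurate.
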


Whenever the mapping $I-\mu\F$ becomes a strict contraction for some $\mu>0$, the map $P_S(I-\mu\F)$ also becomes a contraction, so the Variational Inequality Problem in this situation has a unique solution, and a natural candidate to approximate this solution is the Picard iteration:
\begin{equation*}
 x_{n+1}:=P_S(x_n-\mu\F(x_n)).
\end{equation*}
This algorithm is referred to as the projected gradient method \cite{LevitinPolyak(66),Goldstein(64)} and converges strongly for all $\mu>0$ such that $I-\mu\F$ is a strict contraction. It is known \cite{Yamada(01)} that, if $\F$ is $\kappa$-Lipschitzian and $\eta$-strongly monotone, i.e.
\begin{equation*}
 \langle\F x-\F y,x-y\rangle\ge\eta\Vert x-y\Vert^2,\quad\text{for all }x,y\in H,
\end{equation*}
then $I-\mu\F$ is a strict contraction with Lipschitz constant $\tau:=1-\sqrt{1-\mu(2\eta-\mu\kappa^2)}$ for all $\mu\in(0,2\eta/\kappa^2)$. The main drawback of this approach is that it requires a closed-form expression for the projection $P_S$ onto $S$, which is not always available.

The hybrid steepest descent method \cite{Yamada(01),XuKim(03)}, HSDM for short, avoids the use of the projection $P_S$. This method only requires that the set $S$ is the set of fixed points $\fix(T)$ of some nonexpansive mapping $T:H\to H$:
\begin{theorem}[Yamada \cite{Yamada(01)}]\label{thm:Yam1}
 Let $T:H\to H$ be a nonexpansive mapping with $\fix(T)\ne\emptyset$. Suppose that a mapping $\F:H\to H$ is $\kappa$-Lipschitzian and $\eta$-strongly monotone over $T(H)$. Then, for any $u_0\in H$, any $\mu\in(0,2\eta/\kappa^2)$ and any sequence $(\lambda_n)\subset(0,1]$ satisfying
 \begin{center}
 \begin{enumerate*}
  \item $\displaystyle\lim_{n\to\infty}\lambda_n=0$,\quad\phantom{x}
  \item $\displaystyle\sum_{n=1}^{\infty}\lambda_n$ diverges, and\quad\phantom{x}
  \item $\displaystyle\lim_{n\to\infty}\frac{\lambda_n-\lambda_{n+1}}{\lambda_n^2}=0$,
 \end{enumerate*}
 \end{center}

 \noindent the sequence $(u_n)$ generated by
 \begin{equation*}
  u_{n+1}:=T(u_n)-\lambda_{n+1}\mu\F(T(u_n))
 \end{equation*}
 converges to the unique solution of $\vip(\F,\fix(T))$.
\end{theorem}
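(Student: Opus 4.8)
The plan is to run the now-standard four-step analysis of Halpern-type iterations, adapted to the steepest-descent perturbation. Write $T^{(\lambda)}(x):=T(x)-\lambda\mu\F(T(x))$, so that $u_{n+1}=T^{(\lambda_{n+1})}(u_n)$, and record at the outset the contraction estimate that drives everything: since $\F$ is $\kappa$-Lipschitzian and $\eta$-strongly monotone over $T(H)$ and $\mu\in(0,2\eta/\kappa^2)$, the computation recalled above shows $I-\lambda\mu\F$ is a $(1-\lambda\tau)$-contraction on $T(H)$ with $\tau:=1-\sqrt{1-\mu(2\eta-\mu\kappa^2)}\in(0,1]$; composing with the nonexpansive $T$ yields $\Vert T^{(\lambda)}(x)-T^{(\lambda)}(y)\Vert\le(1-\lambda\tau)\Vert x-y\Vert$ for all $x,y\in H$ and $\lambda\in(0,1]$, and $T^{(\lambda)}(p)=p-\lambda\mu\F(p)$ whenever $p\in\fix(T)$.

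First I would establish \emph{boundedness}: fixing $p\in\fix(T)$, the estimate above gives $\Vert u_{n+1}-p\Vert\le(1-\lambda_{n+1}\tau)\Vert u_n-p\Vert+\lambda_{n+1}\mu\Vert\F(p)\Vert$, so by induction $\Vert u_n-p\Vert\le\max\{\Vert u_0-p\Vert,\ \mu\Vert\F(p)\Vert/\tau\}$, whence $(u_n)$, $(Tu_n)$ and $(\F(Tu_n))$ are bounded, say $\Vert\F(Tu_n)\Vert\le M$ for all $n$. Next, \emph{asymptotic regularity}: writing $u_{n+1}-u_n=\bigl(T^{(\lambda_{n+1})}(u_n)-T^{(\lambda_{n+1})}(u_{n-1})\bigr)+(\lambda_n-\lambda_{n+1})\mu\F(Tu_{n-1})$ one gets $\Vert u_{n+1}-u_n\Vert\le(1-\lambda_{n+1}\tau)\Vert u_n-u_{n-1}\Vert+\mu M\,\vert\lambda_n-\lambda_{n+1}\vert$. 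Condition (3) forces $(\lambda_n-\lambda_{n+1})/\lambda_n\to0$, hence $\lambda_n/\lambda_{n+1}\to1$ and $\vert\lambda_n-\lambda_{n+1}\vert/\lambda_{n+1}\to0$, so together with (2) the standard lemma for recursions $a_{n+1}\le(1-\gamma_n)a_n+\gamma_n\delta_n$ (here $\gamma_n:=\lambda_{n+1}\tau$, $\delta_n\to0$) gives $\Vert u_{n+1}-u_n\Vert\to0$; since $u_{n+1}-Tu_n=-\lambda_{n+1}\mu\F(Tu_n)\to0$ by (1), also $\Vert u_n-Tu_n\Vert\to0$.

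The heart of the matter is the \emph{limsup inequality} $\limsup_n\langle\F(u^*),u^*-u_n\rangle\le0$, where $u^*$ is the solution of $\vip(\F,\fix(T))$ (unique by strong monotonicity; existence via Proposition~\ref{prop:char}\ref{item:characterisation} and the Banach fixed point theorem applied to the contraction $P_{\fix(T)}(I-\mu\F)$). I would choose a subsequence of $(u_n)$ attaining this $\limsup$, pass by boundedness to a weakly convergent sub-subsequence $u_{n_k}\rightharpoonup\bar u$, and invoke Browder's demiclosedness principle for the nonexpansive $T$ together with $\Vert u_{n_k}-Tu_{n_k}\Vert\to0$ to conclude $\bar u\in\fix(T)$; the $\limsup$ then equals $\langle\F(u^*),u^*-\bar u\rangle$, which is $\le0$ since it is exactly the variational inequality for $u^*$ tested at $\bar u\in\fix(T)$. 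To \emph{conclude}, set $v_n:=u_{n+1}-u^*+\lambda_{n+1}\mu\F(u^*)=T^{(\lambda_{n+1})}(u_n)-T^{(\lambda_{n+1})}(u^*)$, expand $\Vert u_{n+1}-u^*\Vert^2=\Vert v_n-\lambda_{n+1}\mu\F(u^*)\Vert^2$, use $\Vert v_n\Vert\le(1-\lambda_{n+1}\tau)\Vert u_n-u^*\Vert$ and $(1-\lambda_{n+1}\tau)^2\le1-\lambda_{n+1}\tau$ to reach $\Vert u_{n+1}-u^*\Vert^2\le(1-\lambda_{n+1}\tau)\Vert u_n-u^*\Vert^2+\lambda_{n+1}\tau\cdot\tfrac{2\mu}{\tau}\langle\F(u^*),u^*-u_{n+1}\rangle$, and close via Xu's lemma on real recursions, whose hypotheses hold by the previous step ($\limsup$ of the last factor $\le0$) and condition (2) ($\sum\lambda_{n+1}\tau=\infty$).

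The hard part is the limsup inequality: extracting a weak cluster point and appealing to demiclosedness is the sole non-effective ingredient, and it is precisely what would have to be replaced by a quantitative (metastable) version of the demiclosedness principle to extract rates. Everything else is uniform, with bounds controlled only by $\tau$, $M$, $\Vert u_0-u^*\Vert$, and the rates witnessing conditions (1)--(3).
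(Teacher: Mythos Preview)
Your argument is correct, but it is not the route the paper (following Yamada's original proof) takes. You work directly with $(u_n)$: boundedness, asymptotic regularity $\Vert u_n-Tu_n\Vert\to0$, a limsup inequality obtained by applying weak compactness and demiclosedness to a subsequence of $(u_n)$ itself, and then Xu's recursion lemma. The paper instead introduces the auxiliary resolvent sequence $v_n=T^{(\lambda_n)}(v_n)$, first proves $v_n\to u^*$ strongly (this is where weak sequential compactness and the demiclosedness principle enter, applied to $(v_n)$ rather than to $(u_n)$), and only afterwards shows $\Vert u_n-v_n\Vert\to0$ by an explicit, effective estimate.

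The two decompositions place the sole non-effective ingredient in different locations, and this matters for the paper's purpose. By isolating the ineffective step in the convergence of the resolvent $(v_n)$, Yamada's proof becomes structurally parallel to Browder's classical theorem on $z_t=tTz_t+(1-t)v_0$, which lets the paper reuse Kohlenbach's existing quantitative analysis of Browder almost verbatim (Lemmas~\ref{lem:proj} and~\ref{lem:switch}) and concentrate all new work on the quantitative solution of the VIP (Problem~\ref{prob:proj}). Your Xu--Kim style argument is cleaner as a qualitative proof and avoids the resolvent machinery entirely, but for proof mining it would force one to redo the metastability analysis of demiclosedness against the iterate sequence and to unwind Xu's lemma separately, rather than leveraging the Browder analysis already on the shelf.
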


Another possibility is that the projection $P_S$ is not known, but $S=\bigcap_{n=1}^NS_n$, where the individual projections $P_{S_n}$ are simple enough to have known closed form expressions \cite{Yamada(01)}. This case is covered by the following Theorem.
\begin{theorem}[Yamada \cite{Yamada(01)}]\label{thm:Yam2}
 For $n=1,\ldots,N$, let $T_n:H\to H$ be nonexpansive mappings that satisfies $S:=\bigcap_{n=1}^N\fix(T_i)\ne\emptyset$, and assume that
 \begin{equation}\label{eq:Bausch}
  F=\fix(T_N\cdots T_1)=\fix(T_1T_N\cdots T_2)=\cdots=\fix(T_{N-1}T_N-2\cdots T_1T_N).\tag{+}
 \end{equation}
Suppose that the mapping $\F:H\to H$ is $\kappa$-Lipschitzian and $\eta$-strongly monotone. Then, for any $u_0\in H$, any $\mu\in(0,2\eta/\kappa^2)$ and any sequence $(\lambda_n)\subset(0,1]$ satisfying
 \begin{center}
 \begin{enumerate*}
   \item $\displaystyle\lim_{n\to\infty}\lambda_n=0$,\quad\phantom{x}
  \item $\displaystyle\sum_{n=1}^{\infty}\lambda_n$ diverges, and\quad\phantom{x}
  \item $\displaystyle\sum_{n=1}^{\infty}\vert\lambda_n-\lambda_{n+N}\vert<\infty$,
 \end{enumerate*}
 \end{center}
 
 \noindent the sequence $(u_n)$ generated by
 \begin{equation*}
  u_{n+1}:=T_{[n+1]}(u_n)-\lambda_{n+1}\mu\F(T_{[n+1]}(u_n))
 \end{equation*}
 converges strongly to the unique solution of $\vip(\F,S)$, where $[n]:=n\mod N$.
\end{theorem}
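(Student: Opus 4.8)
The plan is to reconstruct the classical proof of this theorem of Yamada, organised around one recursion lemma of Xu's type: if $a_{n+1}\le(1-\gamma_n)a_n+\gamma_n\delta_n+\sigma_n$ with $(\gamma_n)\subset[0,1]$, $\sum_n\gamma_n=\infty$, $\limsup_n\delta_n\le0$ and $\sum_n\sigma_n<\infty$, then $a_n\to0$. This lemma gets applied three times — for boundedness, for the ``displacement after $N$ steps'', and for the final convergence — and the only genuinely non-elementary ingredient is a single weak-compactness-plus-demiclosedness step, which is exactly where the quantitative content sits.

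First, the algebra. For $\lambda\in(0,1]$ write $T_j^{\lambda}x:=T_jx-\lambda\mu\F(T_jx)=(1-\lambda)T_jx+\lambda(I-\mu\F)(T_jx)$. By the fact recalled in the introduction, $\mu\in(0,2\eta/\kappa^2)$ makes $I-\mu\F$ Lipschitz with constant $1-\tau$, where $\tau:=1-\sqrt{1-\mu(2\eta-\mu\kappa^2)}\in(0,1]$; hence each $T_j^{\lambda}$ is $(1-\lambda\tau)$-Lipschitz, and since the unique solution $u^*$ of $\vip(\F,S)$ — which exists by Proposition~\ref{prop:char} together with the Banach fixed point theorem applied to the contraction $P_S(I-\mu\F)$ — lies in $S=\bigcap_j\fix(T_j)$, we have $T_j^{\lambda}(u^*)=u^*-\lambda\mu\F(u^*)$. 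Feeding $u^*$ through the iteration gives $\|u_{n+1}-u^*\|\le(1-\lambda_{n+1}\tau)\|u_n-u^*\|+\lambda_{n+1}\mu\|\F u^*\|$, so a routine induction yields $\|u_n-u^*\|\le\max\{\|u_0-u^*\|,\mu\|\F u^*\|/\tau\}$ for all $n$; consequently $(u_n)$, $(T_{[n]}u_n)$ and $(\F(T_{[n]}u_n))$ are bounded, say $\|\F(T_{[n]}u_n)\|\le B$.

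Next, asymptotic regularity. From condition~(i), $\|u_{n+1}-T_{[n+1]}u_n\|=\lambda_{n+1}\mu\|\F(T_{[n+1]}u_n)\|\le\lambda_{n+1}\mu B\to0$, so it suffices to control $b_n:=\|u_{n+N}-u_n\|$. Using $[n+N]=[n]$ and the one-step estimate $\|T_j^{\lambda}x-T_j^{\lambda'}y\|\le(1-\lambda\tau)\|x-y\|+|\lambda-\lambda'|\mu B$ on orbit points, the $b_n$ satisfy a recursion of Xu's form with $\gamma_n=\lambda_{n+N}\tau$ (divergent sum by (ii)), $\delta_n=0$ and $\sigma_n=\mu B|\lambda_{n+N}-\lambda_n|$ (summable by (iii)); hence $\|u_{n+N}-u_n\|\to0$. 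Writing $u_{n+N}=T_{[n+N]}^{\lambda_{n+N}}\cdots T_{[n+1]}^{\lambda_{n+1}}u_n$ and using (i) once more — each factor $T_j^{\lambda_j}$ differs from the nonexpansive map $T_j$ by at most $\lambda_j\mu B$ on the orbit — I conclude $\|u_n-W_{[n]}u_n\|\to0$, where $W_{[n]}:=T_{[n+N]}\circ\cdots\circ T_{[n+1]}$ is the cyclic product selected by the residue of $n$; here hypothesis~\eqref{eq:Bausch} is indispensable, since it is precisely what forces $\fix(W_{[n]})=F=S$ for \emph{every} residue (using $S\ne\emptyset$).

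The decisive step is $\limsup_n\langle\F u^*,u^*-u_n\rangle\le0$. Pick a subsequence attaining the $\limsup$; by pigeonhole it has a further subsequence $(u_{n_k})$ along which the residue $[\,\cdot\,]$ is constant, say $r$, and — the orbit being bounded — along which $u_{n_k}\rightharpoonup z$ weakly. Since $\|u_{n_k}-W_ru_{n_k}\|\to0$ and $W_r$ is nonexpansive, demiclosedness of $I-W_r$ at $0$ gives $z\in\fix(W_r)=S$, so $\langle\F u^*,u^*-z\rangle\le0$ by $\vip(\F,S)$ with $v=z$; as $\F u^*$ is a fixed vector, $\langle\F u^*,u^*-u_{n_k}\rangle\to\langle\F u^*,u^*-z\rangle\le0$, and this limit is the $\limsup$. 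Finally, expanding $\|u_{n+1}-u^*\|^2$ from $u_{n+1}-u^*=(T_{[n+1]}^{\lambda_{n+1}}u_n-T_{[n+1]}^{\lambda_{n+1}}u^*)-\lambda_{n+1}\mu\F u^*$ yields $\|u_{n+1}-u^*\|^2\le(1-\lambda_{n+1}\tau)\|u_n-u^*\|^2+2\lambda_{n+1}\mu\langle\F u^*,u^*-u_{n+1}\rangle$, again of Xu's form (with $\gamma_n=\lambda_{n+1}\tau$, $\delta_n$ a multiple of $\langle\F u^*,u^*-u_{n+1}\rangle$, $\sigma_n=0$), so $\|u_n-u^*\|\to0$. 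The main obstacle — and the whole point of a proof-mining analysis — is this decisive step: weak compactness and demiclosedness are non-effective, so a metastability rate requires an effective substitute built from the approximate-fixed-point bounds $\|u_n-W_{[n]}u_n\|$, a modulus witnessing \eqref{eq:Bausch}, and the projection characterisation of $u^*$ in Proposition~\ref{prop:char}, in the spirit of the proof-mining treatments of Halpern- and viscosity-type iterations, with the three Xu-type recursions and the cyclic pigeonholing all run with explicit moduli and glued into one rate.
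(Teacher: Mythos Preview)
Your reconstruction is correct and is essentially Yamada's original argument (boundedness, $\|u_{n+N}-u_n\|\to0$, asymptotic regularity with respect to the cyclic product, weak compactness plus demiclosedness to get $\limsup\langle\F u^*,u^*-u_n\rangle\le0$, then the Xu-type recursion for $\|u_n-u^*\|^2$). There is nothing to fault in the mathematics.

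However, the paper does not prove Theorem~\ref{thm:Yam2} at all: it is stated and attributed to Yamada, and the paper's contribution is the \emph{quantitative} analogue, Theorem~\ref{thm:mainquant2}, together with the surrounding machinery. The route taken there is genuinely different from your sketch at the structural level. Rather than first proving $(u_n)$ Cauchy via the $\limsup$ inequality and then reading off metastability, the paper replaces the non-effective weak-compactness/demiclosedness step by the functional-interpretation substitute developed in Sections~4--5: an explicit approximate solution $\tilde u$ of the VIP (Theorem~\ref{thm:proj}, solving Problem~\ref{prob:proj}) is produced against carefully chosen counterfunctions $V_{\varepsilon,g},\Delta_{\varepsilon,g}$ built from the iteration itself, and Lemma~\ref{lem:core} then plays the role of your final Xu recursion with $\tilde u$ in place of the ideal $u^*$. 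The asymptotic-regularity portion (your step on $\|u_n-W_{[n]}u_n\|$) is handled quantitatively in Theorem~\ref{thm:asy} and Corollary~\ref{thm:asy2}, and the Bauschke condition enters through the modulus $\rho$ of~\eqref{eq:rhoperm} (with Theorem~\ref{thm:perm} showing that one cyclic instance suffices). What your approach buys is a short, transparent qualitative proof; what the paper's approach buys is an explicit metastability rate that does not pass through any ideal limit point, which is precisely the point you flag in your closing paragraph.
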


It should be remarked that Theorem \ref{thm:Yam2} admits $\lambda_n:=1/n$, while Theorem \ref{thm:Yam1} only allows for $\lambda_n:=1/n^\rho$ for $0<\rho<1$. However, since one can choose $N=1$ in Theorem \ref{thm:Yam2}, the choice $\lambda_n:=1/n$ is also covered for the case of a single nonexpansive mapping $T$ with $\fix(T)=S$. Moreover, it is interesting to note that the Bauschke condition \eqref{eq:Bausch} introduced in \cite{Bauschke(96)} is always satisfied whenever $T_n=P_{S_n}$ for closed convex sets $S_n$ with nonempty intersection.

\section{Relation to Moudafi's Viscosity Approximation Method}

Roughly at the same time as Yamada, Moudafi \cite{Moudafi(00)} independently proposed the Viscosity Approximation Method, which is given for nonexpansive $T:C\to C$ and strictly contractive $f:C\to C$ by
\begin{equation*}
 x_{n+1}:=\lambda_{n+1}f(x_n)+(1-\lambda_n)Tx_n.
\end{equation*}
Now observe that Yamada's iteration scheme, the Hybrid Steepest Descent Method, can be rearranged as follows:
\begin{align*}
 u_{n+1}&=Tu_n-\lambda_{n+1}\mu\F(Tu_n)\\
	&=(1-\lambda_{n+1})Tu_n+\lambda_{n+1}(I-\mu\F)(Tu_n).
\end{align*}
Therefore, Yamada's iteration scheme is a special case of the Viscosity Approximation Method if one chooses the contraction $f:=(I-\mu\F)\circ T$. However, Yamada's proof establishing convergence of the HSDM under the proposed conditions is easily reformulated to accomodate for the prima facie more general Viscosity Approximation Method. Moreover, the bounds proposed in this paper also hold for the Viscosity Approximation Method, as the reader may readily verify.

Moreover, both Yamada's and Bruck's conditions imposed on $(\lambda_n)$ do not include the important case $\lambda_n=1/(n+1)$. Xu \cite{Xu(04)} later showed that the Viscosity Approximation Method converges for $\lambda_n=1/(n+1)$ by proving convergence under Wittmann's conditions.
\begin{equation*}
 \text{(i)}\lim\limits_{n\rightarrow\infty}\alpha_n=0,\qquad\text{(ii)}\sum\limits_{n=0}^{\infty}\alpha_n=\infty,\qquad\text{(iii)}\displaystyle\lim_{n\to\infty}(\alpha_n-\alpha_{n-1})/\alpha_n=0
\end{equation*}
However, one should note that Yamada's Theorem for \textit{finitely many mappings} $T_i$ (Theorem \ref{thm:Yam2}) imposes precisely these conditions on $(\lambda_n)$ for the case $N=1$.

Convergence of the Viscosity Approximation Method for finitely many mappings
\begin{equation}\label{eq:final}
 x_{n+1}:=\lambda_{n+1}f(x_n)+(1-\lambda_{n+1})T_{[n+1]}(x_n),\quad [n]:=n\mod N
\end{equation}
was later shown by Jung \cite{Jung(06)}. One again easily verifies that the bounds provided for the Hybrid Steepest Descent Method for the case of a finite family of nonexpansive mappings $T_i$ also holds for the corresponding Viscosity Approximation Method.

Finally, one should observe that the HSDM for a finite family of mappings is, in fact, not a special case of \eqref{eq:final}. In fact, rearranging the HSDM as before, one obtains
\begin{align*}
 u_{n+1}&=T_{[n+1]}(u_n)-\lambda_{n+1}\mu\F(T_{[n+1]}(u_n))\\
	&=(1-\lambda_{n+1})Tu_n+\lambda_{n+1}(I-\mu\F)(T_{[n+1]}u_n)
\end{align*}
Since the contraction $(I-\mu\F)\circ T_{[n+1]}$ now depends on $n$, it is not permitted in the Viscosity Approximation Method.

\section{Rate of convergence versus rate of metastability}
An effective rate of convergence for the iterations of Theorems \ref{thm:Yam1} and \ref{thm:Yam2} to the solution $u^*$ of the VIP is a function $R:(0,\infty)\to\NN$ such that
\begin{equation*}
 \forall\varepsilon>0\forall n\ge R(\varepsilon)\left(\Vert u_n-u^*\Vert<\varepsilon\right).
\end{equation*}
However, effective rates on the strong convergence of $(x_n)$ are generally ruled out. In fact, there are (computable) nonexpansive mappings $f$ on the Hilbert cube (sequences $(x_n)\in\ell_2$ with $\vert x_n\vert\le1$ for all $n$) that have no computable fixed points \cite{Neumann(15)}, and so no sequence approximating any fixed point of $f$ can have a computable rate of convergence. Following general proof-theoretic methods, it is necessary to pass first to an alternate version of Cauchyness, the so-called metastability in the sense of Tao, i.e. (here $[n;n+g(n)]:=\{ n,n+1,n+2,\ldots,n+g(n)\}$) 
\begin{equation*}
  \forall \varepsilon >0\,\forall g:\NN\to\NN\,\exists n\in\NN\,\forall i,
  j\in [n;n+g(n)]\,\big(\Vert u_i-u_j\Vert<\varepsilon\big).
\end{equation*}
Metastability is the so-called Herbrand normal form of (a suitable reformulation of) the Cauchy statement for the sequence $(x_n)$, and, as such, is equivalent to the original statement. This version then becomes finitary in the sense that it only talks about finite subsequences of $(x_n)$ when one additionally has a rate of metastability, which is a bound $\Phi:(0,\infty)\times\NN^\NN\to\NN$ on the existential quantifier:
\begin{equation*}
  \forall \varepsilon >0\,\forall g:\NN\to\NN\,\exists n\le\Phi(\varepsilon,g)\,\forall i,j\in [n;n+g(n)]\,\big(\Vert u_i-u_j\Vert<\varepsilon\big).
\end{equation*}
Such bounds are guaranteed to exist and will be computable on rational accuracies $\varepsilon$ under vastly general conditions on the complexity of the proof \cite{Kohlenbach(08)}. 

Quantitative, finitary versions of all of Theorems \ref{thm:Yam1} and \ref{thm:Yam2}, however, should not only finitize the Cauchyness of $(u_n)$, but also that the strong limit is indeed a solution to the variational inequality problem: For all $\varepsilon\in(0,1]$ and all $g:\NN\to\NN$ there exists an $n\le\Xi(\varepsilon,g)$ and an $\varepsilon'>0$ such that, for all $i,j\in[n,n+g(n)]$ and $v\in\fix(T)$,
 \begin{enumerate}[label=(\roman*)]
  \item\label{it:1} $\Vert u_i-u_j\Vert\le\varepsilon$ for all $i,j\in[n;n+g(n)]$
  \item\label{it:2} $\Vert Tv-v\Vert\le\varepsilon'$ implies $\langle\G u_n-u_n,v-u_n\rangle\le\varepsilon$, where $\G=I-\mu\F$ for suitable $\mu>0$.
 \end{enumerate}
  The new, logically transformed proof of \ref{it:1} and \ref{it:2} is totally elementary in that all ideal principles have been eliminated; one can recover Yamada's original theorem using only the axiom of choice over \textit{quantifier-free} formulas.

\section{A Quantitative Solution to the VIP}

We now examine the structure of the proof of Theorem \ref{thm:Yam1} from a proof-theoretic perspective. Given a nonexpansive mapping $T:H\to H$ and a $\kappa$-Lipschitzian and $\eta$-strongly monotone mapping $\F:H\to H$, fix an arbitrary $\mu\in(0,2\eta/\kappa^2)$. Then, the mapping $T^{(\lambda)}:H\to H$ defined by $T^{(\lambda)}(x):=T(x)-\lambda\mu\F(Tx)$ is a strict contraction for all $\lambda\in(0,1]$. As such, given a sequence $(\lambda_n)\subset(0,1]$, there exists for each nonnegative integer $n$ a unique solution $v_n$ to the equation 
\begin{equation}\label{eq:it}
 v_n=T^{(\lambda_n)}(v_n)=Tv_n-\lambda_n\mu\F(Tv_n).
\end{equation}
Next, Yamada shows using weak sequential compactness that $v_n$ converges weakly to the unique fixed point $u^*$ of $T$ that solves the Variational Inequality Problem $\vip(\F,\fix(T))$. Since, moreover, $\Vert v_n-T(v_n)\Vert$ converges to zero, the demiclosedness principle then implies that the weak limit $u^*$ is a fixed point of $T$. This, in turn, is used to prove using constructive reasoning that $(v_n)$ converges strongly to $u^*$. The final step is then a constructive proof of $\Vert u_n-v_n\Vert\to0$, where $(u_n)$ is the iteration proposed in Theorem \ref{thm:Yam1}.

Structurally, the proof of $v_n\to u^*$ is reminiscent of the proof of the following classical result due to Browder:
\begin{theorem}[\cite{Browder(67a)}]\label{thm:browder}
 Let $H$ be a Hilbert space and $T:H\to H$ be a nonexpansive mapping that maps a bounded, closed and convex subset $C$ of $H$ into itself. Let $v_0$ be an arbitrary point of $C$, and for each $k$ with $0<k<1$, let $U_k(x):=kU(x)+(1-k)v_0$.
 
 Then $U_k$ is a strict contraction of $H$, $U_k$ has a unique fixed point $u_k$ in $C$, and $u_k$ converges as $k\to1$ strongly to a fixed point $u_0$ of $U$ in $C$. The fixed point $u_0$ in $C$ is uniquely specified as the fixed point of $U$ in $C$ closest to $v_0$.
\end{theorem}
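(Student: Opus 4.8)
The plan is to run the classical Browder argument, which splits into an elementary algebraic part, a key inner-product inequality, and a compactness-plus-demiclosedness part identifying the limit. First I would dispose of the elementary claims. For $x,y\in H$,
\[
\|U_k(x)-U_k(y)\| = k\,\|U(x)-U(y)\| \le k\,\|x-y\|,
\]
so $U_k$ is a $k$-contraction with $k<1$; and since $U(x)\in C$, $v_0\in C$ and $C$ is convex, $U_k$ maps $C$ into $C$. Banach's fixed point theorem then yields the unique fixed point $u_k\in C$. Rewriting the fixed point equation as $u_k-U(u_k)=(1-k)(v_0-U(u_k))$ and using boundedness of $C$ gives $\|u_k-U(u_k)\|\le(1-k)\diam(C)\to0$ as $k\to1$; in particular $(u_k)$ is bounded.

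Next I would record the key inequality. Fix any $q\in\fix(U)\cap C$. Writing $u_k-q=k\big(U(u_k)-U(q)\big)+(1-k)(v_0-q)$, taking the inner product with $u_k-q$, and estimating $\langle U(u_k)-U(q),u_k-q\rangle\le\|u_k-q\|^2$ by Cauchy--Schwarz and nonexpansiveness, one obtains $\|u_k-q\|^2\le k\|u_k-q\|^2+(1-k)\langle v_0-q,u_k-q\rangle$, hence
\[
\|u_k-q\|^2\le\langle v_0-q,\,u_k-q\rangle\qquad\text{for every }q\in\fix(U)\cap C.
\]

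For the convergence, fix any sequence $k_n\to1^-$. Since $(u_{k_n})$ lies in the bounded, closed and convex --- hence weakly sequentially compact --- set $C$, some subsequence satisfies $u_{k_{n_j}}\rightharpoonup p\in C$. Because $\|u_{k_{n_j}}-U(u_{k_{n_j}})\|\to0$, demiclosedness of $I-U$ at $0$ forces $p\in\fix(U)$. Putting $q=p$ in the key inequality yields $\|u_{k_{n_j}}-p\|^2\le\langle v_0-p,u_{k_{n_j}}-p\rangle\to0$ by weak convergence, so in fact $u_{k_{n_j}}\to p$ strongly. Passing to the limit in the key inequality along this strongly convergent subsequence gives $\|p-q\|^2\le\langle v_0-q,p-q\rangle$, i.e. $\langle v_0-p,\,q-p\rangle\le0$ for all $q\in\fix(U)\cap C$; as $\fix(U)\cap C$ is closed and convex, this is exactly the variational characterization of the metric projection, so $p=P_{\fix(U)\cap C}(v_0)$. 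Since this characterization pins down $p$ independently of the sequence $(k_n)$ and of the chosen subsequence, the standard subsequence argument upgrades weak subsequential convergence to strong convergence $u_k\to p$ as $k\to1$, with $p$ the fixed point of $U$ in $C$ closest to $v_0$.

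The only non-elementary inputs are weak sequential compactness of bounded closed convex subsets of $H$ and Browder's demiclosedness principle for $I-U$ at $0$; I expect the demiclosedness step to be the conceptual crux --- it is precisely the ``ideal'' ingredient the excerpt flags as later being replaced by constructive reasoning in the quantitative treatment of $v_n\to u^*$.
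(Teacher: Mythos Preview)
Your argument is correct and is precisely the classical Browder proof the paper has in mind: the paper does not give its own proof of this theorem but cites \cite{Browder(67a)} and summarizes the structure as ``weak sequential compactness, and the unique existence of a point that solves a variational inequality'' characterizing $P_{\fix(U)}(v_0)$---exactly the two nonconstructive inputs you isolate. Your key inequality $\|u_k-q\|^2\le\langle v_0-q,u_k-q\rangle$, the demiclosedness step, and the projection characterization match the sketch the paper relies on, so there is nothing to add.
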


The nonconstructive part of the proof of this theorem also consists of weak sequential compactness, and the unique existence of a point that solves a variational inequality. The latter in this case is the variational inequality that characterizes the metric projection $P_{\fix(T)}(v_0)$ of the point $v_0$ onto $\fix(T)$, which reads
\begin{equation*}
 \langle v_0-P_{\fix(T)}(v_0),v-P_{\fix(T)}(v_0)\rangle\le0,\quad\text{for all }v\in\fix(T).
\end{equation*}

An extensive proof-theoretic analysis of this proof has already been carried out by Kohlenbach \cite{Kohlenbach(11)}. By virtue of the complete modularity of the logical machinery employed therein, one can reuse the quantitative versions of the use of weak sequential compactness and the demiclosedness principle.

The unique existence of the solution $u^*$ to the Variational Inequality Problem, on the other hand, is substantially more difficult to constructivize than the existence of the metric projection $P_{\fix(T)}(v_0)$. To make sense of this, observe first that for neither of the two proofs the exact point is needed, but only an $\varepsilon$-approximation. For the projection, this corresponds to finding for all $\varepsilon>0$ a point $u\in C$ such that
\begin{equation}\label{eq:pro}
 Tu=u\wedge\forall v\in C\big(Tv=v\ra\Vert u-v_0\Vert\le\Vert v-v_0\Vert+\varepsilon\big).
\end{equation}
The correct form of a quantitative version, i.e.~the Dialectica interpretation combined with negative translation, of this statement is the one given in the following Lemma:
\begin{lemma}[Lemma 2.6 of \cite{Kohlenbach(11)}]\label{lem:proj}
 Let $v_0\in C$ such that $\diam(C)\le d$. Let $\varepsilon\in(0,1],t\in[0,1],\Delta:C\times(C\to(0,1])\to(0,1]$ and $V:C\times(C\to(0,1])\to C$. Then, one can construct $u:=u_{v_0,T}(t,\varepsilon,\Delta,V)\in C$ and $\varphi:=\varphi_{v_0,T}(t,\varepsilon,\Delta,V):C\to(0,1]$ such that
 \begin{equation*}
  \Vert u-Tu\Vert<\Delta(u,\varphi)
 \end{equation*}
 and
 \begin{align}
  \Vert TV(u,\varphi)-V(u,\varphi)\Vert<\varphi(V(u,\varphi))\ra\Vert v_0-u\Vert^2\le\Vert v_0-V^t(u,\varphi)\Vert^2+\varepsilon,\label{eq:proj_quant}
 \end{align}
 where $V^t(u,\varphi):=(1-t)u+tV(u,\varphi)$. In fact, $u,\varphi$ can be defined explicitly as functionals in $\varepsilon,\Delta$ and $V$ in addition to $v_0$ and $T$.
\end{lemma}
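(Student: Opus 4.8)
The plan is to read \eqref{eq:proj_quant} as the monotone Dialectica interpretation, after negative translation, of the classical statement \eqref{eq:pro} that $v_0$ has an approximate nearest point in $\fix(T)$, and to build the realizer $u,\varphi$ from the classical argument for that statement, stripped of its ideal ingredients. Classically one knows $\fix(T)\neq\emptyset$ (weak sequential compactness plus demiclosedness of $I-T$, analysed quantitatively in \cite{Kohlenbach(11)}), that $\fix(T)$ is closed and convex, and that $P_{\fix(T)}(v_0)$ exists and is characterised by $\langle v_0-P_{\fix(T)}(v_0),v-P_{\fix(T)}(v_0)\rangle\le 0$ for all $v\in\fix(T)$. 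For the $\varepsilon$-approximate conclusion, however, no genuine fixed point is needed: it can be read off from approximate fixed points of $T$ on $C$, which are effectively available, together with the observation that the inequality obtained classically by feeding an exact fixed point into the monotonicity of $I-T$ survives, with a controllable error, for approximate ones. This is why the projection case is elementary, whereas constructivising the unique existence of the solution of the variational inequality problem is not.

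Concretely I would first assemble the following ingredients. For $k\in(0,1)$ put $S_k x:=kTx+(1-k)v_0$; this is a $k$-contraction of $C$ into itself, so by the effective Banach fixed point theorem it has a unique fixed point $u_k\in C$, computable from $T,v_0,k$ with a geometric rate, and from $u_k=kTu_k+(1-k)v_0$ one reads off $u_k-Tu_k=\tfrac{1-k}{k}(v_0-u_k)$, whence $\Vert u_k-Tu_k\Vert\le\tfrac{(1-k)d}{k}$. Since $T$ is nonexpansive, $I-T$ is monotone, so, using $u_k-Tu_k=\tfrac{1-k}{k}(v_0-u_k)$ and $\Vert u_k-v\Vert\le d$, for every $v\in C$
\begin{equation*}
 \langle v_0-u_k,v-u_k\rangle\le\frac{d}{1-k}\,\Vert v-Tv\Vert .
\end{equation*}
Expanding $\Vert v_0-((1-t)u_k+tv)\Vert^2=\Vert(v_0-u_k)-t(v-u_k)\Vert^2$ and discarding the nonnegative $t^2$-term gives
\begin{equation*}
 \Vert v_0-u_k\Vert^2\le\Vert v_0-((1-t)u_k+tv)\Vert^2+\frac{2d}{1-k}\,\Vert v-Tv\Vert ,
\end{equation*}
which is exactly the shape of \eqref{eq:proj_quant}: with $v=V(u_k,\varphi)$ and the premise $\Vert TV(u_k,\varphi)-V(u_k,\varphi)\Vert<\varphi(V(u_k,\varphi))$ in force, it suffices to have chosen $\varphi$ so small that $\varphi(\,\cdot\,)\le\varepsilon(1-k)/(2d)$ --- e.g.\ a constant function, which also sidesteps the apparent self-reference of $\varphi$ occurring inside $V(u,\varphi)$. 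Finally, the parallelogram law yields an explicit modulus $\psi$ of quantitative convexity of approximate fixed point sets: if $\Vert a-Ta\Vert\le\delta$, $\Vert b-Tb\Vert\le\delta'$, $\Vert a-b\Vert\le d$, then every convex combination $c$ of $a$ and $b$ satisfies $\Vert c-Tc\Vert\le\psi(\delta,\delta',d)$ with $\psi(\delta,\delta',\cdot)\to0$ as $\delta,\delta'\to0$.

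With these at hand I would obtain $u,\varphi$ by a bounded primitive recursion of length $M:=\lceil d^2/\varepsilon\rceil+1$ that drives down the potential $\Phi(x):=\Vert v_0-x\Vert^2\in[0,d^2]$. One carries a current candidate pair, produced via the effective Banach iteration and the estimates above so as to be approximately fixed to the precision $\Delta$ demands of it; at each stage the relevant quantifier-free instance of \eqref{eq:proj_quant} is tested, and if it fails then $V$ returns a point $v$ that is approximately fixed and strictly closer to $v_0$, so that passing to $(1-t)\cdot(\text{old})+t v$ decreases $\Phi$ by at least $\varepsilon$ while, by $\psi$, keeping the candidate suitably approximately fixed. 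Since $\Phi\ge 0$ this can occur at most $M$ times, so the recursion halts at a pair meeting both clauses, and $u,\varphi$ are then explicit functionals of $\varepsilon,\Delta,V$ (and of $v_0,T$), as asserted.

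The hard part is the bookkeeping: one must thread together the Browder parameters $k$, the constant accuracies $\varphi$, and the precision of the Banach iterations so that the clause $\Vert u-Tu\Vert<\Delta(u,\varphi)$ --- in which the arbitrary functional $\Delta$ is evaluated at the very object being constructed --- holds at every stage that is visited, and so that the $\psi$-losses do not accumulate over the $M$ stages beyond what the $\varepsilon$-budget permits. This self-referential matching is precisely what the monotone functional interpretation is built to resolve, but carrying it through and reading off fully explicit $u,\varphi$ is where the actual work lies; in accordance with the remark preceding the lemma, nothing beyond quantifier-free choice enters.
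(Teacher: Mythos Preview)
Your proposal and the paper's construction (quoted from Kohlenbach in Section~5) share the same core engine: a bounded descent on the potential $\Phi(x)=\Vert v_0-x\Vert^2\in[0,d^2]$, terminating in at most $n_\varepsilon=\lceil d^2/\varepsilon\rceil$ steps because each failure of \eqref{eq:proj_quant} forces a drop of at least $\varepsilon$. The paper's explicit realiser is precisely this: start from an \emph{exact} fixed point $u_1:=\hat u\in\fix(T)$ (used as a parameter, not computed), set $u_{i+1}:=(1-t)u_i+tV(u_i,\psi_{n_\varepsilon-i-1}^{u_i})$, and define the tolerances $\psi_i$ by a \emph{backward} recursion $\psi_{i+1}(v):=\min\{\Delta(v,\psi_i^v),\psi_i^v(V(v,\psi_i^v))\}$ that pre-computes, from the last stage downwards, exactly the precision $\Delta$ will demand at each visited stage. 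This is how the paper dissolves the self-reference you flag.

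Your detour through the Browder resolvents $u_k=\fix(S_k)$ and the monotonicity inequality $\langle v_0-u_k,v-u_k\rangle\le \tfrac{d}{1-k}\Vert v-Tv\Vert$ is correct as a standalone observation, but it does not mesh with the descent as you sketch it. For $u=u_k$ with constant $\varphi$, the implication \eqref{eq:proj_quant} holds \emph{unconditionally}; the only obstruction is $\Vert u_k-Tu_k\Vert<\Delta(u_k,\varphi)$, which your descent loop---triggered by \emph{failure} of \eqref{eq:proj_quant}---never addresses. Conversely, once you pass to $(1-t)\cdot(\text{old})+tv$, the new point is no longer a Browder iterate, so your monotonicity inequality no longer applies and \eqref{eq:proj_quant} is no longer automatic. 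The convexity modulus $\psi$ you invoke to keep the iterates approximately fixed is also unnecessary in the paper's version: there the role of ``approximately fixed'' is played entirely by the backward-defined $\psi_i$'s, not by propagating a fixedness estimate forward through convex combinations.

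In short: right potential, right step count, right diagnosis of where the work lies; but the Browder layer is superfluous, and the descent you describe should be run directly from a fixed point with the tolerances defined backwards, as the paper does.
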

\begin{remark}
 We will sometimes call the functionals $\Delta$ and $V$ \emph{counterfunctions} in the style of the no-counterexample-interpretation due to Kreisel \cite{Kreisel(52a),Kreisel(52b)}.
\end{remark}

Let us now turn to formulating an analogue of this lemma in the context of the VIP. To be able to reuse as much as possible from the previous analysis, it is convenient to reformulate the iteration \eqref{eq:it} as a convex combination: For $\G:=I-\mu\F$, where $\mu\in(0,2\eta/\kappa^2)$, one can re-write \eqref{eq:it} as
\begin{equation*}
  v_n=(1-\lambda_n)Tv_n+\lambda_n\G(Tv_n),
\end{equation*}
and the iteration proposed in Theorem \ref{thm:Yam1} as
\begin{equation*}
 u_{n+1}=(1-\lambda_{n+1})Tu_n+\lambda_{n+1}\G(Tu_n).
\end{equation*}
As remarked earlier, for any choice $\mu\in(0,2\eta/\kappa^2)$, the mapping $\G$ is a strict contraction with Lipschitz constant $\tau:=\sqrt{1-\mu(2\eta-\mu\kappa^2)}<1$. From now on, we simply assume that we are given an \textit{arbitrary} $\tau$-contraction $\G$, making no reference to $\F$.

Now, the operators $T$ and $\G$ need only be defined as self-maps on a closed and convex subset $C$ of $H$. To be able to apply Lemma \ref{lem:proj}, we also assume that $C$ is bounded with $\diam(C)\le d$. This condition, however, is no real restriction, as we will show later on, so that $C=H$ is still admissible for our results (see Corollaries \ref{cor:diam} and \ref{cor:analogo}).

Observe that the characterization stated in Proposition \ref{prop:char}\ref{item:characterisation} of the solution to the $\vip$ is formalized by
\begin{equation*}
 \exists u^*\in C\Big(Tu^*=u^*\wedge\forall v\in C\big(Tv=v\ra\Vert u^*-\G u^*\Vert\le\Vert v-\G u^*\Vert\big)\Big).
\end{equation*}
As already mentioned, we only need the weakened, $\varepsilon$-version of this statement. Analogously to the case of the $\varepsilon$-metric projection \eqref{eq:pro}, this corresponds to
\begin{equation*}
 \forall\varepsilon>0\exists u^*\in C\Big(Tu^*=u^*\wedge\forall v\in C\big(Tv=v\ra\Vert u^*-\G u^*\Vert\le\Vert v-\G u^*\Vert+\varepsilon\big)\Big).
\end{equation*}
The same tools that were used to transform \eqref{eq:pro} now tell us that our task is to solve the following problem:
\begin{prob}\label{prob:proj} Suppose $C$ is a closed, bounded, convex subset of a Hilbert space $H$ with $\diam(C)\le d$ for some nonnegative integer $d$, $T:C\to C$ is nonexpansive and $\G:C\to C$ is $\tau$-contractive. For $\varepsilon\in(0,1],t\in[0,1],\Delta:C\times(C\to(0,1])\to(0,1]$ and $V:C\times(C\to(0,1])\to H$, solve for $u^*$ and $\varphi$ in the formula
 \begin{align}
 &\forall\varepsilon\in(0,1]\forall\Delta:C\times(C\to(0,1])\forall V:C\times(C\to(0,1])\ra C\notag\\
 &\quad\exists u^*\in C\exists\varphi:C\to(0,1]\Big(\Vert Tu^*-u^*\Vert\le\Delta(u^*,\varphi)\,\wedge\label{eq:proj1}\\
 &\qquad\quad\Vert TV(u^*,\varphi)-V(u^*,\varphi)\Vert<\varphi(V(u^*,\varphi))\ra\Vert\G u^*-u^*\Vert^2<\Vert\G u^*-V^t(u^*,\varphi)\Vert^2+\varepsilon\Big),\notag
\end{align}
where, as before, $V^t(u^*,\varphi):=(1-t)u^*+tV(u^*,\varphi)$. 
\end{prob}

By Proposition \ref{prop:char}, the unique point $u^*\in\fix(P_{\fix(T)}\circ\G)$ will solve the $\vip$. The quantitative version of this step is given by the following Lemma.

\begin{lemma}[Lemma 2.7 of \cite{Kohlenbach(11)}]\label{lem:switch}
 Let $u^*,u,v\in H$ such that $\Vert u^*-v\Vert\le d$. For $t\in[0,1]$, define $w_t:=(1-t)u^*+tv$. Then 
 \begin{equation*}
  \forall\varepsilon\in(0,1](\Vert u^*-u\Vert^2\le\frac{\varepsilon^2}{2d^2}+\Vert u-w_{\frac{\varepsilon}{3d^2}}\Vert^2\ra\langle u-u^*,v-u^*\rangle<\varepsilon).
 \end{equation*}
 \end{lemma}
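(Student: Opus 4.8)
The plan is to prove Lemma~\ref{lem:switch} by a direct algebraic computation, exploiting the quadratic identity for convex combinations in inner product spaces and then estimating the cross term. The key observation is that $w_t = u^* + t(v-u^*)$, so expanding $\Vert u - w_t\Vert^2$ around $u^*$ should produce exactly the inner product $\langle u-u^*, v-u^*\rangle$ with a controllable coefficient, together with error terms that are quadratic in $t$ and hence small once $t$ is chosen proportional to $\varepsilon$.

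\medskip

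\textbf{Step 1: Expand.} First I would write
\begin{equation*}
 \Vert u - w_t\Vert^2 = \Vert (u-u^*) - t(v-u^*)\Vert^2 = \Vert u-u^*\Vert^2 - 2t\langle u-u^*, v-u^*\rangle + t^2\Vert v-u^*\Vert^2.
\end{equation*}
Rearranging, this gives
\begin{equation*}
 2t\langle u-u^*, v-u^*\rangle = \Vert u-u^*\Vert^2 - \Vert u-w_t\Vert^2 + t^2\Vert v-u^*\Vert^2.
\end{equation*}

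\textbf{Step 2: Insert the hypothesis.} Assume $\Vert u^*-u\Vert^2 \le \frac{\varepsilon^2}{2d^2} + \Vert u - w_{t}\Vert^2$ with the specific choice $t := \frac{\varepsilon}{3d^2}$. Substituting the bound on $\Vert u^* - u\Vert^2$ into the right-hand side of the rearranged identity yields
\begin{equation*}
 2t\langle u-u^*, v-u^*\rangle \le \frac{\varepsilon^2}{2d^2} + t^2\Vert v-u^*\Vert^2 \le \frac{\varepsilon^2}{2d^2} + t^2 d^2,
\end{equation*}
using $\Vert v - u^*\Vert \le d$.

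\textbf{Step 3: Solve for the inner product.} Dividing by $2t = \frac{2\varepsilon}{3d^2}$ gives
\begin{equation*}
 \langle u-u^*, v-u^*\rangle \le \frac{1}{2t}\cdot\frac{\varepsilon^2}{2d^2} + \frac{t d^2}{2} = \frac{3d^2}{2\varepsilon}\cdot\frac{\varepsilon^2}{2d^2} + \frac{\varepsilon}{2\cdot 3d^2}\cdot\frac{d^2}{1}\cdot\frac{1}{1} = \frac{3\varepsilon}{4} + \frac{\varepsilon}{6} = \frac{11\varepsilon}{12} < \varepsilon.
\end{equation*}
Thus the strict inequality $\langle u-u^*, v-u^*\rangle < \varepsilon$ follows.

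\medskip

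There is essentially no hard part here; the lemma is a quantitative bookkeeping statement and the only point requiring any care is the choice of the scaling constant (the ``$3$'' in $\frac{\varepsilon}{3d^2}$), which must be picked so that both error contributions $\frac{1}{4t}\cdot\frac{\varepsilon^2}{d^2}$ and $\frac{td^2}{2}$ stay strictly below $\varepsilon$ after summation — any constant strictly larger than $2$ would in fact work, and $3$ gives comfortable slack as shown above. One should also note the degenerate case $t=0$ (which happens only in the limit $\varepsilon \to 0$, but here $\varepsilon \in (0,1]$ and $d \ge 1$ is a positive integer, so $t = \frac{\varepsilon}{3d^2} > 0$ always), and the trivial case $d = 0$ would force $u^* = v$ and the statement is vacuous; implicitly $d$ is assumed to be a positive integer so that division by $d^2$ is legitimate. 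The whole argument is manifestly constructive and uses only the parallelogram-type expansion, so it fits the proof-mining context: it is the correct monotone functional interpretation of the variational inequality characterizing the projection, with $t$ playing the role of the realizer extracted from the ``for all $v$'' quantifier.
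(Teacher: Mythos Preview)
Your proof is correct and is exactly the intended direct computation: expand $\Vert u-w_t\Vert^2$, isolate the inner product, and use the hypothesis together with $\Vert u^*-v\Vert\le d$ to bound the two error terms by $\tfrac{3\varepsilon}{4}+\tfrac{\varepsilon}{6}=\tfrac{11\varepsilon}{12}<\varepsilon$. The paper does not give its own proof of this lemma (it is quoted verbatim as Lemma~2.7 of \cite{Kohlenbach(11)}), and the argument there is precisely this algebraic expansion, so your approach coincides with the original.
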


To solve Problem \ref{prob:proj}, recall that by Proposition \ref{prop:char}, $u^*$ is the unique fixed point of the mapping $x\mapsto P_{\fix(T)}(\G x)$. Since the metric projection is nonexpansive and $\G$ is, for proper choice of $\mu$, a strict contraction, this mapping is also a strict contraction. Thus, the Picard iteration, starting with an arbitrary point $p$, converges strongly to $u^*$:
\begin{equation*}
 u^*=\lim_{n\to\infty}(P_{\fix(T)}\circ\G)^{(n)}(p).
\end{equation*}

In view of this, it is not surprising that a quantitative version of the existence of $u^*$ will iterate the solution functionals of Lemma \ref{lem:proj}.

Before we proceed, we need the following variant of Lemma \ref{lem:proj}, as it turns out later that we need to win against two counterfunction pairs $(\Delta_1,V_1)$ and $(\Delta_2,V_2)$, simultaneously:
\begin{lemma}\label{lem:proj_con}
 Let $v_0\in C$ such that $v_0-v\le d$ for some $v\in\fix(T)$. Let $\varepsilon\in(0,1],t_1,t_2\in[0,1],\Delta_1,\Delta_2:C\times(C\to(0,1])\to(0,1]$ and $V_1,V_2:C\times(C\to(0,1])\to C$. Then, one can construct a $u:=u'_{v_0,T}(t_1,t_2,\varepsilon,\Delta_1,\Delta_2,V_1,V_2)\in C$ and a $\varphi:=\varphi'_{v_0,T}(t_1,t_2,\varepsilon,\Delta_1,\Delta_2,V_1,V_2):C\to(0,1]$ such that for $i=1,2$,
 \begin{equation*}
  \Vert u-Tu\Vert<\Delta_i(u,\varphi)
 \end{equation*}
 and
 \begin{align*}
  &\Vert TV_i(u,\varphi)-V_i(u,\varphi)\Vert<\varphi(V_i(u,\varphi))\notag\\
  &\qquad\ra\Vert v_0-u\Vert^2\le\Vert v_0-V_i^{t_i}(u,\varphi)\Vert^2+\varepsilon.
 \end{align*}
\end{lemma}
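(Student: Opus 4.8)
The plan is to adapt the construction underlying Lemma~\ref{lem:proj} (the proof of Lemma~2.6 in \cite{Kohlenbach(11)}) rather than to redo any analytic argument. Recall how the point $u$ and the modulus $\varphi$ are obtained there: one starts from a sufficiently good approximate fixed point of $T$ --- produced by the Banach iteration for the contraction $x\mapsto kTx+(1-k)v_0$ with $k$ close enough to $1$ --- and then runs a finite loop in which the current candidate $w$ is replaced by $V^{t}(w,\varphi)=(1-t)w+tV(w,\varphi)$ whenever $V(w,\varphi)$ is a $\varphi$-approximate fixed point and $\|v_0-V^{t}(w,\varphi)\|^2<\|v_0-w\|^2-\varepsilon$; since this last quantity lies in $[0,d^2]$ and drops by at least $\varepsilon$ per step, the loop halts after at most $\lceil d^2/\varepsilon\rceil$ steps, at a point $u$ that already satisfies the required implication, and $\varphi$ is then a \emph{constant} whose value is chosen small enough (in terms of $\Delta,\varepsilon,d$) that every point visited along the loop is an approximate fixed point of the precision $\Delta$ demands of it.

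To win against the two pairs $(\Delta_1,V_1)$ and $(\Delta_2,V_2)$ simultaneously, I would make the following modifications. First, set $\Delta:=\min\{\Delta_1,\Delta_2\}$ pointwise; this is again a functional $C\times(C\to(0,1])\to(0,1]$, and it forces $\|u-Tu\|<\Delta(u,\varphi)\le\Delta_i(u,\varphi)$ for $i=1,2$. Second, branch the loop on both counterfunctions: at a candidate $w$, if for some $i\in\{1,2\}$ the point $V_i(w,\varphi)$ is a $\varphi$-approximate fixed point and $\|v_0-V_i^{t_i}(w,\varphi)\|^2<\|v_0-w\|^2-\varepsilon$, then move to $V_i^{t_i}(w,\varphi)$ (which, being a convex combination of two approximate fixed points, is again approximately fixed, with a controlled loss in quality); otherwise halt. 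Each step still lowers $\|v_0-\cdot\|^2\in[0,d^2]$ by at least $\varepsilon$, so the loop halts after at most $\lceil d^2/\varepsilon\rceil$ steps --- in particular it cannot cycle, regardless of which $V_i$ is used at a given step --- at a point $u$ for which, for \emph{both} $i$, the point $V_i(u,\varphi)$ being a $\varphi$-approximate fixed point implies $\|v_0-u\|^2\le\|v_0-V_i^{t_i}(u,\varphi)\|^2+\varepsilon$; since $\varphi(V_i(u,\varphi))$ is precisely the $\varphi$-value at that point, this is exactly the implication required by Lemma~\ref{lem:proj_con}. For $\varphi$ one again takes a constant function, now small enough to handle the at most $\lceil d^2/\varepsilon\rceil+1$ points occurring along the branching loop for both pairs at once. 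The weakening of the hypothesis from $\diam(C)\le d$ to ``$\|v_0-v\|\le d$ for some $v\in\fix(T)$'' costs nothing, since the construction uses only one fixed point of $T$ lying within distance $d$ of $v_0$.

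The one genuinely delicate point is the same one that already has to be dealt with for a single counterfunction: the value of the constant $\varphi$ must be chosen small enough that all the finitely many approximate fixed points arising along the loop actually meet the precision $\Delta$ asks of them, and $\Delta$ --- hence that precision --- may itself depend on $\varphi$. As in \cite{Kohlenbach(11)}, this circularity is broken by defining $u$ and $\varphi$ by a single explicit finite recursion in the data $\Delta,\varepsilon,v_0,T$ (and now also $t_1,t_2,V_1,V_2$); passing from one counterfunction to two merely doubles the branching of the recursion and the amount of quality-degradation that the constant $\varphi$ must dominate. Everything else is routine bookkeeping.
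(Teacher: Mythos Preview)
Your argument is correct, but it is considerably more laborious than what the paper actually does. You open up the recursion from Lemma~\ref{lem:proj} and rerun it with a two-way branching, checking at each node whether either $V_1$ or $V_2$ forces a drop of $\varepsilon$ in $\|v_0-\cdot\|^2$. The paper instead reduces to Lemma~\ref{lem:proj} as a \emph{black box}: it sets $\Delta(u,\varphi):=\min\{\Delta_1(u,\varphi),\Delta_2(u,\varphi)\}$ (exactly your first modification) and defines a single counterfunction
\[
V(u,\varphi):=\begin{cases}V_1(u,\varphi),&\text{if }\|v_0-V_1^{t_1}(u,\varphi)\|\le\|v_0-V_2^{t_2}(u,\varphi)\|,\\ V_2(u,\varphi),&\text{otherwise,}\end{cases}
\]
then simply takes $u':=u_{v_0,T}(\,\cdot\,,\varepsilon,\Delta,V)$ and $\varphi':=\varphi_{v_0,T}(\,\cdot\,,\varepsilon,\Delta,V)$. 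The point is that the selected index gives the \emph{smaller} right-hand side $\|v_0-V_i^{t_i}(u,\varphi)\|^2$, so once the conclusion of Lemma~\ref{lem:proj} holds for the selected $i$, it holds a fortiori for the other one; and $\|u-Tu\|<\Delta(u,\varphi)$ automatically gives both inequalities $\|u-Tu\|<\Delta_i(u,\varphi)$. What this buys is modularity: no recursion has to be rerun, and the majorization in Lemma~\ref{lem:maj_psi}(iv) becomes a one-liner ($\varphi'^*(f_1,f_2)=\varphi^*(\max\{f_1,f_2\})$). Your route works too and is perhaps conceptually more transparent about why termination is unaffected, but it duplicates the bookkeeping of \cite{Kohlenbach(11)} and would force you to redo the majorant calculation from scratch. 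One small inaccuracy: in Kohlenbach's construction $\varphi$ is \emph{not} a constant function but is built by the recursion $\psi_{i+1}=\lambda v.\Delta'(v,\psi_i)$; you acknowledge the circularity this creates, so this is only a slip in wording, not in substance.
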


\begin{proof}
 Given $\Delta_1,\Delta_2:C\times(C\to(0,1])\to(0,1]$ and $V_1,V_2:C\times(C\to(0,1])\to C$, define $\Delta:C\times(C\to(0,1])\to(0,1]$ by
 \begin{equation*}
  \Delta(u,\varphi):=\min\{\Delta_1(u,\varphi),\Delta_2(u,\varphi)\},
 \end{equation*}
 and $V:C\times(C\to(0,1])\to C$ by
 \begin{equation*}
  V(u,\varphi):=\left\{\begin{array}{ll}
                        V_1(u,\varphi),&\textmd{if }\Vert v_0-V_1^{t_1}(u,\varphi)\Vert\le\Vert v_0-V_2^{t_2}(u,\varphi)\Vert,\\
                        V_2(u,\varphi),&\textmd{otherwise.}
                       \end{array}\right.
 \end{equation*}
 Using the solution operators of Lemma \ref{lem:proj}, we define $u'_{v_0,T}(t,\varepsilon,\Delta_1,\Delta_2,V_1,V_2):=u_{v_0,T}(t,\varepsilon,\Delta,V)$ and $\varphi'_{v_0,T}(t,\varepsilon,\Delta_1,\Delta_2,V_1,V_2):=\varphi_{v_0,T}(t,\varepsilon,\Delta,V)$.
\end{proof}

Returning to the original problem, we start with an arbitrary point $p$ in $C$ and use Lemma 2.4 of \cite{Kohlenbach(11)} to obtain a point $u_0$ and a functional $\varphi_0$ which together solve the quantitative version (according to Lemma \ref{lem:proj}) of the $\varepsilon$-projection of $p$ onto $\fix(T)$ for suitable counterfunctions $\Delta_0$ and $V_0$. We then repeat this procedure for $\G u_0$, obtaining a point $u_1$, and so on. In total, we obtain points $u_i$ and functionals $\varphi_i$ such that
\begin{align}
 &\quad\Vert Tu_i-u_i\Vert\le\Delta_i(u_i,\varphi_i)\,\wedge\label{eq:proj2}\\
 &\qquad\quad\Big(\Vert TV_i(u_i,\varphi_i)-V_i(u_i,\varphi_i)\Vert<\varphi_i(V_i(u_i,\varphi_i))\notag\\
 &\qquad\qquad\qquad\qquad\ra\Vert\G u_{i-1}-u_i\Vert^2<\Vert\G u_{i-1}-V_i^t(u_i,\varphi_i)\Vert^2+\varepsilon\Big),\notag
\end{align}
for suitable counterfunctions $\Delta_i$ and $V_i$ which depend on the counterfunctions $\Delta$ and $V$ from statement  \eqref{eq:proj1}. (As before, $V_i^t(u_i,\varphi_i):=(1-t)\G u_i+tV_i(u_i,\varphi_i)$.)

The key in solving Problem \ref{prob:proj} will be the observation that $u_i$ is the $\varepsilon$-projection of $\G u_{i-1}$ with respect to counterfunctions $V_i$ and $\Delta_i$. Therefore, the points $u_i$ are an $\varepsilon$-version of the Picard-iteration of the contractive mapping $P_{\fix(T)}\circ\G$. As such, the distance $\Vert u_i-u_{i-1}\Vert$ can be made arbitrarily small for a sufficiently large $i$, given that we choose our counterfunctions in the correct way -- in this case counterfunctions that ensure that the $\varepsilon$-projection is $\varepsilon$-nonexpansive with respect to the involved points. The simple observation
\begin{align}
 \Vert\G u_i-u_i\Vert^2&\le\Vert\G u_{i-1}-u_i\Vert^2+\tau^2\Vert u_i-u_{i-1}\Vert^2+2\tau\cdot\Vert\G u_{i-1}-u_i\Vert\cdot\Vert u_i-u_{i-1}\Vert\notag\\
	  &\stackrel{\eqref{eq:proj2}}{<}\Vert\G u_{i-1}-V_i^t(u_i,\varphi_i)\Vert^2+\varepsilon+\tau^2\Vert u_i-u_{i-1}\Vert^2+2d\tau\Vert u_i-u_{i-1}\Vert\notag\\
	  &\le\bigl(\Vert\G u_i-V_i^t(u_i,\varphi_i)\Vert+\Vert\G u_i-\G u_{i-1}\Vert\bigr)^2+\varepsilon+\tau^2\Vert u_i-u_{i-1}\Vert^2+2d\tau\Vert u_i-u_{i-1}\Vert\notag\\
	  &\le\Vert\G u_i-V_i^t(u_i,\varphi_i)\Vert^2+2\tau^2\Vert u_i-u_{i-1}\Vert^2+4d\tau\Vert u_i-u_{i-1}\Vert+\varepsilon\label{eq:ineqproj}
\end{align}
then tells us that we may take $u^*=u_i$ if the integer $i$ is large enough to ensure the distance between $u_i$ and $u_{i-1}$ is small enough.

Our task is now to analyze the following proof that the metric projection is nonexpansive. For $x,y\in H$, denote by $Px$ and $Py$ their projections onto an arbitrary convex set. Then
\begin{equation*}
 \langle Px-x,Px-Py\rangle\le0\quad\textmd{and}\quad\langle Py-y,Py-Px\rangle\le0.
\end{equation*}
Summing up these two inequalities yields $\langle Px-Py+y-x,Px-Py\rangle\le0$, which implies
\begin{equation*}
 \Vert Px-Py\Vert\le\langle x-y,Px-Py\rangle\le\Vert x-y\Vert\cdot\Vert Px-Py\Vert,
\end{equation*}
which implies the claim.

Quantitatively, this translates as follows (for later convenience already instantiated with the points $u_i$ and projection onto $\fix(T)$). Suppose that we have for some $\tilde\varepsilon>0$ to be specified later on that
\begin{align}
 &\Vert u_{i+1}-\G u_i\Vert^2\le\frac{\tilde\varepsilon^4}{8d^2}+\left\Vert\left(1-\frac{\tilde\varepsilon^2}{6d^2}\right)u_{i+1}+\frac{\tilde\varepsilon^2}{6d^2}u_i-\G u_i\right\Vert^2\label{eq:eins},\\
 &\Vert u_i-\G u_{i-1}\Vert^2\le\frac{\tilde\varepsilon^4}{8d^2}+\left\Vert\left(1-\frac{\tilde\varepsilon^2}{6d^2}\right)u_i+\frac{\tilde\varepsilon^2}{6d^2}u_{i+1}-\G u_{i-1}\right\Vert^2\label{eq:zwei}.
\end{align}
For notational simplicity later on, we write
\begin{equation*}
 A(\tilde\varepsilon,u,v,p):\equiv \Vert u-p\Vert^2\le\frac{\tilde\varepsilon^4}{8d^2}+\left\Vert\left(1-\frac{\tilde\varepsilon^2}{6d^2}\right)u+\frac{\tilde\varepsilon^2}{6d^2}v-p\right\Vert^2.
\end{equation*}
Then $\eqref{eq:eins}\equiv A(\tilde\varepsilon,u_{i+1},u_i,\G u_i)$ and $\eqref{eq:zwei}\equiv A(\tilde\varepsilon,u_i,u_{i+1},\G u_{i-1})$ for $i\ge 1$. By Lemma \ref{lem:switch}, $A(\tilde\varepsilon,u_{i+1},u_i,\G u_i)$ and $A(\tilde\varepsilon,u_i,u_{i+1},\G u_{i-1})$ together imply
\begin{align*}
 \langle u_{i+1}-\G u_i,u_{i+1}-u_i\rangle<\tilde\varepsilon^2/2,\\
 \langle u_i-\G u_{i-1},u_i-u_{i+1}\rangle<\tilde\varepsilon^2/2.
\end{align*}
Thus, since $\G$ is a $\tau$-contraction,
\begin{align}
 \Vert u_{i+1}-u_i\Vert^2&<\Vert\G u_i-\G u_{i-1}\Vert\cdot\Vert u_{i+1}-u_i\Vert+\tilde\varepsilon^2\notag\\
			 &\le\tau\Vert u_i-u_{i-1}\Vert\cdot\Vert u_{i+1}-u_i\Vert+\tilde\varepsilon^2.\label{eq:drei}
\end{align}
Now, the problem is that, when we divide the inequality by $\Vert u_{i+1}-u_i\Vert$ (if it is strictly greater than 0), the term $\tilde\varepsilon/\Vert u_{i+1}-u_i\Vert$ becomes unbounded for small $\Vert u_{i+1}-u_i\Vert$. However, we want to make $\Vert u_{i+1}-u_i\Vert$ small anyway, so this is not a problem, and it gives rise to the following case distinction.
\begin{enumerate}[label=(\roman*)]
 \item For $\Vert u_{i+1}-u_i\Vert<\tilde\varepsilon$, we immediately get $\Vert u_{i+1}-u_i\Vert<\tilde\varepsilon\le\tau\Vert u_i-u_{i-1}\Vert+\tilde\varepsilon$
 \item For $\Vert u_{i+1}-u_i\Vert\ge\tilde\varepsilon$, we get $\Vert u_{i+1}-u_i\Vert<\tau\Vert u_i-u_{i-1}\Vert+\tilde\varepsilon$ by dividing \eqref{eq:drei} by $\Vert u_{i+1}-u_i\Vert$. 
\end{enumerate}
Thus, we have shown for all integers $i\ge0$ that $A(\tilde\varepsilon,u_{i+1},u_i,\G u_i)$ and $A(\tilde\varepsilon,u_i,u_{i+1},\G u_{i-1})$ imply $\Vert u_{i+1}-u_{i}\Vert<\tau\Vert u_i-u_{i-1}\Vert+\tilde\varepsilon$. Now suppose $\Vert u_{i+1}-u_i\Vert<\tau\Vert u_i-u_{i-1}\Vert+\tilde\varepsilon$ for all nonnegative integers $k\le i$, then
\begin{align*}
 \Vert u_{i+1}-u_i\Vert&<\tau\Vert u_i-u_{i-1}\Vert+\tilde\varepsilon\\
			&<\tau^2\Vert u_{i-1}-u_{i-2}\Vert+\tau\tilde\varepsilon+\tilde\varepsilon\\
			&<\ldots\\
			&<\tau^{i+1}\cdot\Vert u_0-p\Vert+\tilde\varepsilon\cdot\sum_{k=0}^{i-1}\tau^k\\
			&<\tau^{i+1}d+\frac{\tilde\varepsilon}{1-\tau}
\end{align*}

Going back to \eqref{eq:ineqproj} with $\tilde\varepsilon/3$ instead of $\varepsilon$, we see that for $d\ge1$
\begin{align*}
 \Vert\G u_i-u_i\Vert^2
 &<\Vert\G u_i-V_i^t(u_i,\varphi_i)\Vert^2+2\tau^2\Vert u_i-u_{i-1}\Vert^2+4d\tau\Vert u_i-u_{i-1}\Vert+\frac{\tilde\varepsilon}{3}\\
 &<\Vert\G u_i-V_i^t(u_i,\varphi_i)\Vert^2+2d^2\tau^{2i+2}+4d^2\tau^{i+1}+\frac{2\tau^2\tilde\varepsilon^2}{(1-\tau)^2}+\frac{4d\tau^2\tilde\varepsilon}{1-\tau}+\frac{\tilde\varepsilon}{3}\\
 &\le\Vert\G u_i-V_i^t(u_i,\varphi_i)\Vert^2+2d^2\tau^{i+1}(\tau^{i+1}+2)+\frac{2\tilde\varepsilon^2}{(1-\tau)^2}+\frac{4d\tilde\varepsilon}{1-\tau}+\frac{\tilde\varepsilon}{3}\\
 &\le\Vert\G u_i-V_i^t(u_i,\varphi_i)\Vert^2+2d^2\tau^{i+1}(\tau^{i+1}+2)+\frac{3+4d}{(1-\tau)^2}\tilde\varepsilon\\
 &\le\Vert\G u_i-V_i^t(u_i,\varphi_i)\Vert^2+3d^2\tau^{i+1}+\frac{3+4d}{(1-\tau)^2}\tilde\varepsilon\\
 &\le\Vert\G u_i-V_i^t(u_i,\varphi_i)\Vert^2+\varepsilon,
\end{align*}
for $i\ge i_0:=\lceil\log_{\tau}(\tilde\varepsilon/6d^2)-1\rceil$ and $\tilde\varepsilon:=\frac{(1-\tau)^2}{6+8d}\varepsilon$. In other words
\begin{multline}\label{eq:impl}
 \Bigg\{A(\tilde\varepsilon,u_{i_0},u_{i_0-1},\G u_{i_0-1})\wedge A(\tilde\varepsilon,u_0,u_1,p)\wedge\bigand{k=1}{i_0-1})\Bigl(\tilde\varepsilon,A(u_i,u_{i+1},\G u_{i-1})\wedge A(\tilde\varepsilon,u_i,u_{i-1},\G u_{i-1}\Bigr)\\\hspace{-4cm}\wedge\Big(\Vert TV(u_{i_0},\varphi_{i_0})-V(u_{i_0},\varphi_{i_0})<\varphi_{i_0}(V(u_{i_0},\varphi_{i_0}))\\\ra\Vert u_{i_0}-\G u_{i_0-1}\Vert^2\le\frac{\tilde\varepsilon}{3}+\left\Vert(1-t)u_{i+1}+tV(u_{i_0},\varphi_{i_0})-\G u_{i_0-1}\right\Vert^2\Big)\Bigg\}\\\ra\Bigg\{\bigg(\Vert TV(u_{i_0},\varphi_{i_0})-V(u_{i_0},\varphi_{i_0})\Vert<\varphi_{i_0}(V(u_{i_0},\varphi_{i_0}))\\\ra\Vert\G u_{i_0}-u_{i_0}\Vert^2\le\Vert\G u_{i_0}-V_{i_0}^t(u_{i_0},\varphi_{i_0 })\Vert^2+\varepsilon\bigg)\Bigg\}.
\end{multline}
Therefore, we need to construct the finite sequence $(u_i)_{0\le i\le i_0}$ satisfying \eqref{eq:eins} and \eqref{eq:zwei}. Then $u_{i_0}$ and $\varphi_{i_0}$ will solve Problem \ref{prob:proj}.

From these considerations, it is clear that for $i=0$, we will need to win against a convex combination (of known weight and accuracy, see \eqref{eq:eins} and \eqref{eq:zwei}) of ourselves, \ie$u_0$, and the subsequent point $u_1$, the $\varepsilon$-projection of $\G u_0$, which we anticipate as the outcome of the iterative process with reference point $\G u_0$ given by Lemma \ref{lem:proj}. Namely, we choose as counterfunction the anticipated next point. But this anticipated next point needs to win both against convex combinations of its predecessor (the point we are trying to construct right now!) and its successor, cf.~\eqref{eq:zwei}.

\begin{notation}
 Suppose that $t(x)$ is a mathematical expression that depends on a variable $x$. Then $\lambda x.t(x)$ denotes the function mapping $x$ to $t(x)$. For example, $\lambda n.n$ for integers $n$ denotes the identity on the integers. Likewise, $\lambda x.x^2$ for real numbers $x$ denotes the square-function on the reals. This notation will prove highly convenient in the sequel.
\end{notation}

For $0\le i\le i_0$, the considerations mentioned above give rise to the counterfunctions (using the previously introduced notation)

\begin{align*}
 V_i(u,\varphi):=\left\{\begin{array}{ll}
	 V(u,\varphi),&\textmd{for }i=i_0,\\
	 u'_{\G u,T}(t,\tilde\varepsilon^2/6d^2,\tilde\varepsilon^4/8d^2,\Delta_{i+1},\lambda v\lambda\psi.\varphi(v),V_{i+1},\lambda v\lambda\psi.u),\quad&\textmd{for }i=i_0-1,\\
         U(\Delta_{i+1},\lambda v\lambda\psi.\varphi(v),V_{i+1},\lambda v\lambda\psi.u,\G u),&\textmd{for }i\le i_0-2,\\
        \end{array}\right.
\end{align*}
and
\begin{align*}
 \Delta_i(u,\varphi):=\left\{\begin{array}{ll}
                              \Delta(u,\varphi),&\textmd{for }i=i_0,\\
			      \varphi'_{\G u,T}(t,\tilde\varepsilon^2/6d^2,\tilde\varepsilon^4/8d^2,\Delta_{i+1},\lambda v\lambda\psi.\varphi(v),V_{i+1},\lambda v\lambda\psi.u)(u),&\textmd{for }i=i_0-1,\\
			      \Phi(\Delta_{i+1},\lambda v\lambda\psi.\varphi(v),V_{i+1},\lambda v\lambda\psi.u,\G u)(u),&\textmd{for }i\le i_0-2,\\
                             \end{array}\right.
\end{align*}
where
\begin{align*}
 U(\Delta,\Delta',V,V',u)&:=u'_{\G u,T}(\tilde\varepsilon^2/6d^2,\tilde\varepsilon^2/6d^2,\tilde\varepsilon^4/8d^2,\Delta,\Delta',V,V'),\\
 \Phi(\Delta,\Delta',V,V',u)&:=\varphi'_{\G u,T}(\tilde\varepsilon^2/6d^2,\tilde\varepsilon^2/6d^2,\tilde\varepsilon^4/8d^2,\Delta,\Delta',V,V'),
\end{align*}
and $u',\varphi'$ are the solution functionals of Lemma \ref{lem:proj_con}. Moreover, $V$ and $\Delta$ are the original counterfunctions of Problem \ref{prob:proj}, i.e.~of the original problem.
Now set
\begin{align*}
 u_i:=\left\{\begin{array}{ll}
                              u_{p,T}(\tilde\varepsilon^2/6d^2,\tilde\varepsilon^4/8d^2,\Delta_0,V_0),&\textmd{for }i=0,\\
			     U(\Delta_i,\lambda v\lambda\psi.\varphi_{i-1}(v),V_i,\lambda v\lambda\psi x.u_{i-1},\G u_{i-1}),&\textmd{for }1\le i\le i_0-1,\\
			      u'_{\G u_{i-1},T}(t,\tilde\varepsilon^2/6d^2,\tilde\varepsilon^4/8d^2,\Delta_i,\lambda v\lambda\psi.\varphi_{i-1}(v),V_i,\lambda v\lambda\psi.u_{i-1}),\quad&\textmd{for }i=i_0,\\
                             \end{array}\right.
\end{align*}
and
\begin{align*}
 \varphi_i:=\left\{\begin{array}{ll}
                              \varphi_{p,T}(\tilde\varepsilon^2/6d^2,\tilde\varepsilon^4/8d^2,\Delta_0,V_0),&\textmd{for }i=0,\\
			      \Phi(\Delta_i,\lambda v\lambda\psi.\varphi_{i-1}(v),V_i,\lambda v\lambda\psi.u_{i-1},\G u_{i-1}),&\textmd{for }1\le i\le i_0-1,\\
			      \varphi'_{\G u_{i-1},T}(t,\tilde\varepsilon^2/6d^2,\tilde\varepsilon^4/8d^2,\Delta_i,\lambda v\lambda\psi.\varphi_{i-1}(v),V_i,\lambda v\lambda\psi.u_{i-1}),\quad&\textmd{for }i=i_0,\\
                             \end{array}\right.
\end{align*}
for some arbitrary point $p\in C$. We now show that these counterfunctions are as required.

For $0\le i\le i_0-2$, the points $u_i$ and the functions $\varphi_i$ satisfy by Lemma \ref{lem:proj_con}
\begin{align*}
 \Vert u_i-Tu_i\Vert&<\Delta_i(u_i,\varphi_i)\\
	  &=\Phi(\Delta_{i+1},\lambda v\lambda\psi.\varphi_i(v),V_{i+1},\lambda v\lambda\psi.u_i,\G u_i)(u_i)\\
	  &=\varphi_{i+1}(u_i).
\end{align*}
Similarly,
\begin{align*}
 \Vert u_{i_0-1}-&Tu_{i_0-1}\Vert<\Delta_{i_0-1}(u_{i_0-1},\varphi_{i_0-1})\\
				&=\varphi'_{\G u_{i_0-1},T}(t,\varepsilon^2/24d^2,\varepsilon^4/32d^2,\Delta_{i_0},\lambda v\lambda\psi.\varphi_{i_{0-1}}(v),V_{i_0},\lambda v\lambda\psi.u_{i_0-1})(u_{i_0-1})\\
				&=\varphi_{i_0}(u_{i_0-1}).
\end{align*}
Consequently, $\Vert u_i-Tu_i\Vert<\varphi_{i+1}(u_i)$ for all $0\le i\le i_0-1$. Moreover, for $1\le i\le i_0$,
\begin{align*}
 \Vert u_i-Tu_i\Vert&<(\lambda v\lambda\psi.\varphi_{i-1}(v))(u_i,\varphi_i)\\
		    &=\varphi_{i-1}(u_i).
\end{align*}
Furthermore, $\Delta_{i_0}=\Delta$, so $\Vert u_{i_0}-Tu_{i_0}\Vert<\Delta(u_{i_0},\varphi_{i_0})$.

Now recall that, for notational simplicity, we denoted formula \eqref{eq:eins} by the formula $A(\tilde{\varepsilon},u_i,u_{i+1},\G u_{i-1})$. The second part of Lemma \ref{lem:proj_con} then reads for $1\le i\le i_0-1$
\begin{align*}
 &\Vert TV_i(u_i,\varphi_i)-V_i(u_i,\varphi_i)\Vert<\varphi_i(V_i(u_i,\varphi_i))\ra A(\tilde{\varepsilon},u_i,V_i(u_i,\varphi_i),\G u_{i-1}),\textmd{ and}\\
 &\Vert Tu_{i-1}-u_{i-1}\Vert<\varphi_i\bigl((\lambda v\lambda\psi.u_{i-1})(u_i,\varphi_i)\bigr)\ra A(\tilde{\varepsilon},u_i,(\lambda v\lambda\psi.u_{i-1})(u_i,\varphi_i),\G u_{i-1}).
\end{align*}
But observe that $V_i(u_i,\varphi_i)=U(\Delta_{i+1},\lambda v\lambda\psi.\varphi_i(v),V_{i+1},\lambda v\lambda\psi.u_i,\G u_i)=u_{i+1}$ and, regarding the second implication, $(\lambda v\lambda\psi.u_{i-1})(u_i,\varphi_i)=u_{i-1}$. Thus, the above implications read 
\begin{align*}
 &\Vert Tu_{i+1}-u_{i+1}\Vert<\varphi_i(u_{i+1})\ra A(\tilde{\varepsilon},u_i,u_{i+1},\G u_{i-1}),\textmd{ and}\\
 &\Vert Tu_{i-1}-u_{i-1}\Vert<\varphi_i(u_{i-1})\ra A(\tilde{\varepsilon},u_i,u_{i-1},\G u_{i-1}),\textmd{ for }1\le i\le i_0-1.
\end{align*}
Since the $V_{i_0}=V$ and $V_0(u_0,\varphi_0)=u_1$, we also get
\begin{multline*}
 \Vert TV(u_{i_0},\varphi_{i_0})-V(u_{i_0},\varphi_{i_0})\Vert<\varphi_{i_0}(V(u_{i_0},\varphi_{i_0}))\\\ra\Vert u_{i_0}-\G u_{i_0-1}\Vert^2\le\frac{\varepsilon^4}{32d^2}+\left\Vert(1-t)u_{i+1}+tV(u_{i_0},\varphi_{i_0})-\G u_{i_0-1}\right\Vert^2
\end{multline*}
and $\Vert Tu_1-u_1\Vert<\varphi_0(u_1)\ra A(\tilde{\varepsilon},u_0,u_1,p)$. Applying the modus ponens and using \eqref{eq:impl}, we then see that $u_{i_0}$ and $\varphi_{i_0}$ are, in fact, solutions for Problem \ref{prob:proj}.

\section{Majorizing the Solution Functionals}
Following [5, 6], we define a notion of majorization for the functionals involved in our solution to Problem \ref{prob:proj}.
\begin{definition}\label{def:maj2}
 \begin{enumerate}[label=(\roman*)]
  \item We say that a function $\varphi:C\to(0,1]$ is majorized by $k\in\NN^*$ if $1/k\le\varphi(v)$ for all $v\in C$. In this case, we write $k\maj\varphi$.
  \item We say that a function $\Delta:C\times(C\to(0,1])\to(0,1]$ is majorized by $f:\NN^*\to\NN^*$ if, for all $\varphi:C\to(0,1]$ and $k\in\NN^*$,
    \begin{equation*}
      k\maj\varphi\ra f(k)\maj\lambda v.\Delta(v,\varphi).
    \end{equation*}
  \item We say that the solution operator $\Phi$ of Problem \ref{prob:proj} and Lemma \ref{lem:proj} (suppressing dependence on the parameters $\varepsilon$ and $t$) is majorized by $\Phi^*:(\NN^*\to\NN^*)\to\NN^*$ if, for all $V$, $\Delta$ and $f$ as before,
  \begin{equation*}
   f\maj\Delta\ra\Phi^*(f)\maj\Phi(\Delta,V).
  \end{equation*}
  \item Similarly, the solution operator $\Phi'$ of Lemma \ref{lem:proj_con} is majorized by $\Phi'^*$ if, for all $V_1,V_2$, $\Delta_1,\Delta_2$ and $f_1,f_2$ as before,
  \begin{equation*}
   f_1\maj\Delta_1\wedge f_2\maj\Delta_2\ra\Phi'^*(f_1,f_2)\maj\Phi(\Delta_1,\Delta_2,V_1,V_2).
  \end{equation*}
 \end{enumerate}
\end{definition}
We now show how to majorize the solution operator $\Psi$ of Problem \ref{prob:proj}. To do so, we first need to majorize the solution operator of Lemma \ref{lem:proj}, which can be stated explicitly as follows \cite{Kohlenbach(11)}: For $i\le n_{\varepsilon}:=\lceil d^2/\varepsilon\rceil$ we define $\psi_i:C\to(0,1]$ and $u_i\in C$ inductively by
 \begin{align*}
  \psi_1(\Delta,V)&:=\lambda v.1&u_1(\Delta,V)&:=\hat u\in \fix(T)\\
  \psi_{i+1}(\Delta,V)&:=\lambda v.\Delta'(v,\psi_i(\Delta,V))&u_{i+1}(\Delta,V)&:=V'(u_i(\Delta,V),\psi_{n_{\varepsilon}-i-1}(\Delta,V)),
 \end{align*}
where
\begin{align*}
 \Delta'(u,\psi)&:=\min\{\Delta(u,\psi^u),\psi^u(V(u,\psi^u))\},\\
 V'(u,\psi)	&:=(1-t)u+t(V(u,\psi^u)),\\
 \psi^u(v)	&:=\frac{\psi((1-t)u+tv)^2}{16d}.
\end{align*}
Then, for some $i\le n_{\varepsilon}$, we have that $u_i(\Delta,V)$ and $\psi^{u_i}_{n_\varepsilon-i}(\Delta,V)$ satisfy the claim. We write $u_{v_0,T}(t,\varepsilon,\Delta,V):=u_i$ and $\varphi_{v_0,T}(t,\varepsilon,\Delta,V):=\psi^{u_i}_{n_\varepsilon-i}$, where $i$ is the least index such that $u_i,\psi^{u_i}_{n_\varepsilon-i}$ satisfy the claim of Lemma \ref{lem:proj}.

\begin{notation}
 Given any function $f:\NN^*\to\NN^*$, define the function $f^M:\NN^*\to\NN^*$ by $f^M(n):=\max\{f(i):1\le n\}$. Observe that $f^M$ is monotone for any given $f$.
\end{notation}

\begin{lemma}\label{lem:maj_psi}
\begin{enumerate}[label=(\roman*)]
 \item\label{lem:maja}The functional $\varphi^*:{\NN^*}^{\NN^*}\to\NN^*$ defined by $\varphi^*(f):=\max\{\psi_i^*(f):1\le i\le n_{\varepsilon}\}$, where $\psi^*_i:{\NN^*}^{\NN^*}\to\NN^*$ is defined recursively by
 \begin{align*}
   \psi_1^*(f)&:=1,\\
   \psi_{i+1}^*(f)&:=\max\{f(16d\cdot\psi_i^*(f)^2),16d\cdot\psi_i^*(f)^2\},
 \end{align*}
is a majorant to the solution operator $\varphi$ of Lemma \ref{lem:proj}, i.e.~$\varphi^*\maj\varphi$.
 \item\label{lem:maj_psi_spec} The functional $\tilde\varphi^*:{\NN^*}^{\NN^*}\to\NN^*$ defined by $\tilde\varphi^*(f):=\tilde f^{(n_{\tilde\varepsilon})}(1)$ is also a majorant to the solution operator $\varphi$ of Lemma \ref{lem:proj}, where $\tilde f:\NN^*\to\NN^*$ is defined by $\tilde f(n):=\max\{f^M(16dn^2),16dn^2\}$.
 \item\label{lem:maj_psi_spec1} If $f$ is a nondecreasing function, then $\tilde\varphi^*(f)=\varphi^*(f)$, and $f^M=f$.
 \item Given any majorant $\varphi^*$ to the solution operator of Lemma \ref{lem:proj}, the function $\varphi^+:{\NN^*}^{\NN^*}\times{\NN^*}^{\NN^*}\to\NN^*$ defined by $\varphi^+(f_1,f_2):=\psi^*(\max\{f_1,f_2\})$ majorizes the solution operator $\varphi'$ of Lemma \ref{lem:proj_con}. Here $\max\{f_1,f_2\}$ denotes the pointwise maximum of the two functions $f_1$ and $f_2$.
\end{enumerate}
\end{lemma}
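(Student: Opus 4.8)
The plan is to prove \ref{lem:maja} from scratch by induction on $i$, and then to obtain \ref{lem:maj_psi_spec}, \ref{lem:maj_psi_spec1} and the last item as essentially formal consequences of the combinatorics of the recursion defining $\psi_i^*$. The only substantial work is \ref{lem:maja}.

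For \ref{lem:maja} I would fix counterfunctions $V,\Delta$ and $f\maj\Delta$ and prove by induction on $i$, for $1\le i\le n_\varepsilon$, that $\psi_i^*(f)\maj\psi_i(\Delta,V)$, i.e.\ $\psi_i(\Delta,V)(v)\ge 1/\psi_i^*(f)$ uniformly in $v$. The base case $i=1$ is trivial since $\psi_1\equiv 1$ and $\psi_1^*(f)=1$. For the step, unfold the explicit description of the solution operator recalled above: $\psi_{i+1}(\Delta,V)(v)=\min\{\Delta(v,\psi_i^v),\,\psi_i^v(V(v,\psi_i^v))\}$, where $\psi_i^v(w)=\psi_i(\Delta,V)((1-t)v+tw)^2/16d$. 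The point is that the uniform induction hypothesis turns the majorant $\psi_i^*(f)$ of $\psi_i(\Delta,V)$ into the majorant $16d\,\psi_i^*(f)^2$ of $\psi_i^v$ — precisely the map $n\mapsto 16dn^2$ hard-wired into the recursion for $\psi^*$. Feeding $16d\,\psi_i^*(f)^2\maj\psi_i^v$ into $f\maj\Delta$ bounds the first term below by $1/f(16d\,\psi_i^*(f)^2)$, while the pointwise bound on $\psi_i^v$ bounds the second below by $1/(16d\,\psi_i^*(f)^2)$; taking the minimum yields $\psi_{i+1}(\Delta,V)(v)\ge 1/\max\{f(16d\,\psi_i^*(f)^2),16d\,\psi_i^*(f)^2\}=1/\psi_{i+1}^*(f)$, closing the induction. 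Since $\varphi(\Delta,V)=\psi_{n_\varepsilon-i}^{u_i}$ for the least admissible index $i$ (necessarily with $1\le i$ and $n_\varepsilon-i\ge 1$), applying the $(\cdot)^u$-rescaling once more gives $\varphi(\Delta,V)(v)\ge 1/(16d\,\psi_{n_\varepsilon-i}^*(f)^2)\ge 1/\psi_{n_\varepsilon-i+1}^*(f)\ge 1/\varphi^*(f)$, using $n_\varepsilon-i+1\le n_\varepsilon$; hence $\varphi^*(f)\maj\varphi(\Delta,V)$.

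For \ref{lem:maj_psi_spec} and \ref{lem:maj_psi_spec1} I would first record the arithmetical facts that $\tilde f$ is nondecreasing and dominates the identity — because $16dn^2\ge n$ for $n\ge 1$ and $f^M$ is monotone — and that $f^M(m)\ge f(m)$ for every $m$, with equality for all $m$ exactly when $f$ is nondecreasing. A short induction using monotonicity of $\tilde f$ then gives $\psi_i^*(f)\le\tilde f^{(i-1)}(1)$ for all $i$, so, since $n\mapsto\tilde f^{(n)}(1)$ is nondecreasing, $\varphi^*(f)=\max_{1\le i\le n_\varepsilon}\psi_i^*(f)\le\tilde\varphi^*(f)$; since larger values still majorize, \ref{lem:maj_psi_spec} follows from \ref{lem:maja}. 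If $f$ is nondecreasing then $f^M=f$, the inequality $\psi_i^*(f)\le\tilde f^{(i-1)}(1)$ becomes an equality, and the sequence $\psi_i^*(f)$ is nondecreasing, so its maximum over $1\le i\le n_\varepsilon$ is the top iterate, which is $\tilde\varphi^*(f)$; this gives \ref{lem:maj_psi_spec1}. The only thing to watch here is the off-by-one between the index range $1,\dots,n_\varepsilon$ and the iteration count defining $\tilde\varphi^*$.

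For the final item I would recall from the proof of Lemma \ref{lem:proj_con} that $\varphi'(\Delta_1,\Delta_2,V_1,V_2)=\varphi(\min\{\Delta_1,\Delta_2\},V)$, where $V$ is the case distinction between $V_1$ and $V_2$. The one small lemma needed is that $f_1\maj\Delta_1$ and $f_2\maj\Delta_2$ imply $\max\{f_1,f_2\}\maj\min\{\Delta_1,\Delta_2\}$, which is immediate because passing to reciprocals interchanges $\max$ and $\min$. Instantiating the assumed majorization $\varphi^*\maj\varphi$ at $\Delta=\min\{\Delta_1,\Delta_2\}$, this $V$, and $f=\max\{f_1,f_2\}$ then yields $\varphi^+(f_1,f_2)=\varphi^*(\max\{f_1,f_2\})\maj\varphi(\Delta,V)=\varphi'(\Delta_1,\Delta_2,V_1,V_2)$, as required. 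I expect the inductive step of \ref{lem:maja} to be the main obstacle — in particular, seeing through the $(\cdot)^u$ notation and recognising that the quadratic rescaling by $16d$ is exactly what the recursion for $\psi_i^*$ is designed to absorb, together with the resulting shift by one in the index; everything after that is routine manipulation of the relation $\maj$.
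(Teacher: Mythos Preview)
Your proposal is correct and follows essentially the same approach as the paper: the same induction on $i$ for \ref{lem:maja}, the same comparison $\psi_i^*(f)\le\tilde f^{(i)}(1)$ (up to the index shift you flag) for \ref{lem:maj_psi_spec}, and the same reduction via $\max\{f_1,f_2\}\maj\min\{\Delta_1,\Delta_2\}$ for the last item. If anything, your treatment of the final step of \ref{lem:maja}---explicitly absorbing the extra $(\cdot)^{u}$-rescaling of $\varphi=\psi_{n_\varepsilon-i}^{u_i}$ into $\psi_{n_\varepsilon-i+1}^*$---is more careful than the paper, which simply appeals to a lemma from \cite{Kohlenbach(08)} at that point.
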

\begin{proof}
\begin{enumerate}[label=(\roman*)]
 \item
 We first show that $\psi_i^*\maj\psi_i$ by induction on $i$. For $i=1$, the claim is trivial since $\psi_0(\Delta,V)(v)=1$ for all $v\in C$. Now, suppose that $\psi_i^*\maj\psi_i$ for some positive integer $i$ and $f\maj\Delta$. Then,
\begin{enumerate}
 \item on the one hand, using the induction hypothesis and the definition of majorization, $\psi_i^*(f)\maj\psi_i(\Delta,V)$. The observation
 \begin{equation*}
  k\maj\psi\ra 16dk^2\maj\psi^u,\quad\text{for all }u\in C
 \end{equation*}
 then implies $16d\cdot \psi_i^*(f)\maj\psi_i^u(\Delta,V)$.
 \item On the other hand, $f\maj\Delta$ by definition implies 
 \begin{equation*}
  k\maj\psi^u_i(\Delta,V)\ra f(k)\maj\lambda v.\Delta(v,\psi^u_i(\Delta,V))
 \end{equation*}
 But the induction hypothesis implies as before $16d\cdot\psi_i^*(f)\maj\psi_i^u(\Delta,V)$, so $f(16d\cdot \psi_i^*(f))\maj\lambda v.\Delta(v,\psi_i^u(\Delta,V))$.
\end{enumerate}
 In total, $\psi_{i+1}^*\maj\psi_{i+1}$. That $\varphi^*$ is a common majorant for all $\psi_i$, where $i\le n_{\varepsilon}$, follows from Lemma 6.4 of \cite{Kohlenbach(08)}. Therefore $\varphi^*\maj\varphi$.
 \item First, we show by induction on $i$ that $\tilde f^{(i)}(1)\ge\psi_i^*(f)$. For $i=1$, the statement holds with equality. Moreover,
 \begin{align*}
  \psi_{i+1}^*(f)&=\max\{f(16d\cdot\psi_i^*(f)^2),16d\cdot\psi^*_i(f)^2\}\\
	  &\le\max\{f^M(16d\cdot\psi_i^*(f)^2),16d\cdot\psi^*_i(f)^2\}	  
 \end{align*}
 By the monotonicity of $f^M$ and the induction hypothesis, we conclude
 \begin{align*}
 \psi_{i+1}^*(f)&\le\max\left\{f^M\left(16d\cdot(\tilde f^{(i)}(1))^2\right),16d\cdot\left(\tilde f^{(i)}(1)\right)^2\right\}\\
		&=\tilde f\left(\tilde f^{(i)}(1)\right)\\
		&=\tilde f^{(i+1)}(1).
 \end{align*}
 Therefore, $\tilde f^{(i)}(1)\ge\psi_i^*(f)$ for all $i$. Since $\tilde f$ is monotone, $\tilde f^{(n_{\varepsilon})}(1)\ge\tilde f^{(i)}(1)$ for all $i\le n_{\varepsilon}$, so the claim follows from part \ref{lem:maja}.
 \item $\varphi^*(f)=\tilde\varphi^*(f)$ for nondecreasing $f$ is shown as in the previous part with equality throughout. 
 \item Suppose $f_i\maj\Delta_i$ for $i=1,2$. Then $\max\{f_1,f_2\}\maj\Delta_i$ as well, so we conclude that $\max\{f_1,f_2\}\maj\lambda u\lambda\varphi.\min\{\Delta_1(u,\varphi),\Delta_2(u,\varphi)\}$. Consequently, since $\varphi^*\maj\varphi$ by hypothesis, we obtain $\varphi^+\maj\varphi'$.
\end{enumerate}
\end{proof}

\begin{lemma}\label{lem:maj_delt}
 Given a majorant $f\maj\Delta$, define a function $f_i:\NN^*\to\NN^*$ by $f_i(k):=\tilde f^{\left(n_{\tilde\varepsilon}^i\right)}(k)$, where $\tilde f$ is defined as in Lemma \ref{lem:maj_psi} and $n_{\tilde\varepsilon}:=\lceil8d^4/\varepsilon^4\rceil$. Then $f_i\maj\Delta_i$ for $0\le i\le i_0$.
\end{lemma}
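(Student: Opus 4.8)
The plan is a downward induction on $i$, from $i=i_0$ to $i=0$, peeling off one application of the solution operator $\varphi'$ of Lemma~\ref{lem:proj_con} at each step. I would first collect the elementary properties of $\tilde f$ on which everything rests: $\tilde f$ is monotone, $\tilde f(n)\ge 16dn^2\ge n$ (using $d\ge 1$), hence $\tilde f^{(m)}(n)\ge n$ for every $m$, and $\tilde f\ge f$ pointwise (since $\tilde f(n)\ge f^M(16dn^2)\ge f^M(n)\ge f(n)$). Consequently every iterate $\tilde f^{(m)}$ with $m\ge 1$ is itself a monotone majorant of $\Delta$, which disposes of the base case: $\Delta_{i_0}=\Delta$, so $f_{i_0}\maj\Delta_{i_0}$.

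For the inductive step, assume $f_{i+1}\maj\Delta_{i+1}$ with $f_{i+1}$ monotone, and unwind the recursion: for $i\le i_0-2$,
\[
 \Delta_i(u,\varphi)=\varphi'_{\G u,T}\big(\tilde\varepsilon^2/6d^2,\ \tilde\varepsilon^2/6d^2,\ \tilde\varepsilon^4/8d^2,\ \Delta_{i+1},\ \lambda v\lambda\psi.\varphi(v),\ V_{i+1},\ \lambda v\lambda\psi.u\big)(u),
\]
and the case $i=i_0-1$ is the same except the first weight is $t$ rather than $\tilde\varepsilon^2/6d^2$, which is immaterial since the explicit majorant of the operator of Lemma~\ref{lem:proj} does not depend on the weight parameters. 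Fix $\varphi\colon C\to(0,1]$ and $k\in\NN^*$ with $k\maj\varphi$; the task is to verify $1/f_i(k)\le\Delta_i(u,\varphi)$ for all $u\in C$. The three points to notice are: the first ``$\Delta$-slot'' of $\varphi'$ is $\Delta_{i+1}$, majorized by $f_{i+1}$; the second ``$\Delta$-slot'' is $\lambda v\lambda\psi.\varphi(v)$, majorized by the constant function $\lambda k'.k$ because $1/k\le\varphi(v)$ for all $v$; and, by Lemma~\ref{lem:maj_psi}(iv), a majorant of $\varphi'$ is built from majorants of just these two slots, uniformly in the $V$-slots and uniformly in the base point (the majorant of the operator of Lemma~\ref{lem:proj} only uses $\diam(C)\le d$, together with $\fix(T)\ne\emptyset$, which holds on the bounded closed convex $C$). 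Hence $\tilde\varphi^*\big(\max\{f_{i+1},\lambda k'.k\}\big)$ majorizes $\varphi'_{\G u,T}(\ldots)$ uniformly in $u$; evaluating the majorized function at the point $u$ shows that $f_i(k):=\tilde\varphi^*\big(\max\{f_{i+1},\lambda k'.k\}\big)$ satisfies $f_i\maj\Delta_i$.

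It then remains to identify this $f_i$ with the asserted closed form. Since $f_{i+1}$ is monotone, so is $h:=\max\{f_{i+1},\lambda k'.k\}$, and Lemma~\ref{lem:maj_psi}(ii)--(iii) give $\tilde\varphi^*(h)=\tilde h^{(n_{\tilde\varepsilon})}(1)$, where $n_{\tilde\varepsilon}$ is the iteration count of the operator of Lemma~\ref{lem:proj} at the accuracy $\tilde\varepsilon^4/8d^2$ occurring inside $\Phi$, i.e.\ $n_{\tilde\varepsilon}=\lceil d^2/(\tilde\varepsilon^4/8d^2)\rceil=\lceil 8d^4/\tilde\varepsilon^4\rceil$. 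Writing $f_{i+1}=\tilde f^{(M)}$, a short induction on $m$ — using $\widetilde{\tilde f^{(M)}}(x)=\tilde f^{(M)}(16dx^2)\le\tilde f^{(M+1)}(x)$ (because $16dx^2\le\tilde f(x)$) together with $k\ge 1$ — yields $\tilde h^{(m)}(1)\le\tilde f^{(m(M+1))}(k)$, so that $f_i(k)\le\tilde f^{(n_{\tilde\varepsilon}(M+1))}(k)$. Thus each layer multiplies the number of nested iterations of $\tilde f$ essentially by the factor $n_{\tilde\varepsilon}$; starting from $M_{i_0}=1$ and iterating down to level $i$ this accumulates to the power of $n_{\tilde\varepsilon}$ recorded for $f_i$ in the statement, and since a larger iterate of $\tilde f$ is still a monotone majorant, $f_i\maj\Delta_i$ follows.

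The step I expect to be the main obstacle is precisely the bookkeeping in the inductive step: correctly matching the two $\Delta$-slots and two $V$-slots of $\varphi'$ to $\Delta_{i+1}$, $\lambda v\lambda\psi.\varphi(v)$, $V_{i+1}$, $\lambda v\lambda\psi.u$; checking that the constant $k$ coming from $k\maj\varphi$ is absorbed into the running argument after the first application of $\widetilde{(\cdot)}$ rather than inflating the bound; and confirming that it is the accuracy $\tilde\varepsilon^4/8d^2$ inside $\Phi$ that produces exactly $n_{\tilde\varepsilon}$ iterations, so that the exponent grows by the right factor at each level.
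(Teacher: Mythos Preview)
Your proposal is correct and follows essentially the same route as the paper: a backward induction on $i$ that, at each level, majorizes the two $\Delta$-slots of $\varphi'$ by $f_{i+1}$ and the constant function $\lambda n.k$, invokes Lemma~\ref{lem:maj_psi}(iv), and then bounds the resulting $\tilde\varphi^*(\max\{f_{i+1},\lambda n.k\})$ by the closed-form iterate of $\tilde f$. The only organizational difference is that the paper separates this into two inductions (first showing an intermediate $\hat f_i:=\lambda k.\varphi^*(\max\{f_{i+1},\lambda n.k\})$ majorizes $\Delta_i$, then showing $\hat f_i\le f_i$), whereas you fold the comparison with the closed form into the single downward induction; the key computations---absorbing the constant $k$ after one application of $\widetilde{(\cdot)}$ and identifying $n_{\tilde\varepsilon}=\lceil 8d^4/\tilde\varepsilon^4\rceil$ from the accuracy $\tilde\varepsilon^4/8d^2$ inside $\Phi$---are the same.
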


\begin{proof}
We show by (backward) induction on $n$ that for any majorant $\varphi^*$ of $\varphi$, the functions $\hat f_i$
\begin{equation}\label{eq:intermed}
	 \left\{\begin{array}{l}
	 \hat f_{i_0}:=\tilde f\maj\Delta_{i_0},\text{ and}\\
         \hspace{4pt}\hat f_i:=\lambda k.\varphi^*(\max\{f_{i+1},\lambda n.k\})\maj\Delta_i,\textmd{ for }i\le i_0-1,
	 \end{array}\right.
\end{equation}
majorize $\Delta_i$, respectively. By definition, $f_{i_0}=f\maj\Delta=\Delta_{i_0}$, completing the induction base. Now recall that, by definition, $f_i\maj\Delta_i$ if and only $k\maj\varphi\ra f_i(k)\maj\lambda v.\Delta_i(v,\varphi)$ for $0\le i\le i_0-1$. So suppose $f_{i+1}\maj\Delta_{i+1}$ and $k\maj\varphi$. Then, $\lambda n.k\maj\lambda\psi.\varphi$. Thus, the induction hypothesis $f_{i+1}\maj\Delta_{i+1}$ implies using the last part of Lemma \ref{lem:maj_psi}
\begin{equation*}
 \lambda k.\varphi^*(\max\{f_{i+1},\lambda n.k\})\maj\Delta_i,
\end{equation*}
Completing the proof of \eqref{eq:intermed}.

We now prove by induction on $i$ that $\hat f_i(k)\le f_i(k)$ for all $i$ and $k$, which will complete the proof of the lemma. The induction start $i=0$ is trivial. For notational simplicity, we write $g_{i,k}(n):=\max\{\hat f_i(n),k\}$. Now observe that, since $\tilde f$ is monotone and satisfies $f(n)\ge n$ for all positive integers $n$, so does $\hat f_i$ for each $i$. Therefore, parts \ref{lem:maj_psi_spec} and \ref{lem:maj_psi_spec1} of Lemma \ref{lem:maj_psi} imply
\begin{align*}
 \hat f_{i_0-i-1}(k)&=g_{i_0-i,k}^{(n_{\tilde\varepsilon})}(1)=g_{i_0-i,k}^{(n_{\tilde\varepsilon}-1)}(g_{i_0-i,k}(1))=g_{i_0-i,k}^{(n_{\tilde\varepsilon}-1)}(\max\{\hat f_{i_0-i}(1),k\})\\
  &=g_{i_0-i,k}^{(n_{\tilde\varepsilon}-1)}(\max\{\hat f_{i_0-i}(1),k\})=\hat f_{i_0-i}^{(n_{\tilde\varepsilon}-1)}(\max\{\hat f_{i_0-i}(1),k\}).
\end{align*}
Using the induction hypothesis and the monotonicity of $\tilde f$, we then see that
\begin{equation*}
 \hat f_{i_0-i-1}(k)=\max\left\{\hat f_{i_0-i}^{(n_{\tilde\varepsilon})}(1),\hat f_{i_0-i}^{(n_{\tilde\varepsilon}-1)}(k)\right\}\le\hat f_{i_0-i}^{(n_{\tilde\varepsilon})}(k)\le \tilde f_{i_0-i}^{(n_{\tilde\varepsilon})}(k)=\tilde f^{\left(n_{\tilde\varepsilon}^i\cdot n_{\tilde\varepsilon}\right)}(k)=f_{i_0-i-1}(k).
\end{equation*}
\end{proof}

 \begin{lemma}\label{lem:maj_delta}
  Suppose $f:\NN^*\to\NN^*$ is monotone, satisfies $f(n)\ge n$ for all positive integers $n$ and $f\maj\Delta$. 
 \end{lemma}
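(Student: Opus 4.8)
The statement of Lemma~\ref{lem:maj_delta} in the excerpt is truncated --- it declares hypotheses on $f$ and on $\Delta$ but states no conclusion. From the context (we have just produced in Lemma~\ref{lem:maj_delt} the majorants $f_i(k):=\tilde f^{(n_{\tilde\varepsilon}^i)}(k)$ for $\Delta_i$, and we are now heading toward majorizing the full solution operator $\Psi$ of Problem~\ref{prob:proj}), the natural content is: under these monotonicity/growth assumptions on $f$, the functions $f_i$ majorize $\Delta_i$ \emph{uniformly in a monotone way}, i.e.\ each $f_i$ is itself monotone and satisfies $f_i(n)\ge n$, and consequently $\tilde\varphi^*(\max\{f_{i_0},\dots\})$ (built from the $f_i$) is a majorant for $\varphi_{i_0}$, hence ultimately one obtains an explicit majorant $\Psi^*$ for $\Psi$. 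I will prove the version: \emph{if $f$ is monotone with $f(n)\ge n$ and $f\maj\Delta$, then for each $0\le i\le i_0$ the function $f_i$ from Lemma~\ref{lem:maj_delt} is monotone, satisfies $f_i(n)\ge n$, and $f_i\maj\Delta_i$; moreover $\varphi^*(f_{i_0})$ majorizes $\varphi_{i_0}$ and the resulting functional majorizes $\Psi$.}

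The plan is as follows. First I would record the elementary closure facts: the map $\tilde f(n)=\max\{f^M(16dn^2),16dn^2\}$ is monotone whenever $f$ is (here $f^M=f$ by Lemma~\ref{lem:maj_psi}\ref{lem:maj_psi_spec1}, since $f$ is nondecreasing), and $\tilde f(n)\ge 16dn^2\ge n$; both properties are preserved under composition and iteration, so each $f_i=\tilde f^{(n_{\tilde\varepsilon}^i)}$ is monotone with $f_i(n)\ge n$. Next, $f_i\maj\Delta_i$ is exactly Lemma~\ref{lem:maj_delt}, which I may cite directly. Then I would feed $f_{i_0}$ (together with the pointwise maxima coming from the $\lambda n.k$ slots, as in the construction of $\Delta_i$ and $V_i$) into the majorant $\varphi^*$ of Lemma~\ref{lem:maj_psi}\ref{lem:maja}: by part~\ref{lem:maj_psi_spec}, for nondecreasing input $\varphi^*$ agrees with $\tilde\varphi^*$, which is the single iterate $\tilde{(\cdot)}^{(n_{\tilde\varepsilon})}(1)$, so the majorant for $\varphi_{i_0}$ is again an explicit finite iterate of the $\tilde{(\cdot)}$-operator built from $f$. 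Finally, assembling the chain $p\mapsto u_0\mapsto\dots\mapsto u_{i_0}$ through the solution operators of Lemmas~\ref{lem:proj} and~\ref{lem:proj_con}, and using Definition~\ref{def:maj2} together with Lemma~\ref{lem:maj_psi}, gives a closed-form majorant $\Psi^*(f)$ for the solution operator $\Psi$ of Problem~\ref{prob:proj}, of the shape $\Psi^*(f)=\tilde f^{(N)}(1)$ with $N$ a fixed polynomial in $d$ and $1/\varepsilon$ (through $n_\varepsilon$, $n_{\tilde\varepsilon}$ and $i_0$).

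Concretely, the steps in order are: (1) prove $\tilde f$ monotone and $\tilde f(n)\ge n$, and that these are inherited by all $f_i$ --- a two-line induction using $f(n)\ge n$ and monotonicity; (2) invoke Lemma~\ref{lem:maj_delt} for $f_i\maj\Delta_i$; (3) apply Lemma~\ref{lem:maj_psi}\ref{lem:maja},\ref{lem:maj_psi_spec},\ref{lem:maj_psi_spec1} to deduce that $\varphi^*(f_{i_0})=\tilde\varphi^*(f_{i_0})$ majorizes $\varphi_{i_0}$, and similarly for the $\varphi'$-operator via part~(iv); (4) walk down the explicit definitions of $u_i,\varphi_i$ from the previous section, replacing each occurrence of $\Delta_i,V_i$ by its majorant and each $\lambda v\lambda\psi.u_{i-1}$/$\lambda v\lambda\psi.\varphi_{i-1}(v)$ slot by the corresponding constant majorant, and read off $\Psi^*$; (5) collect the bound $N=n_\varepsilon\cdot n_{\tilde\varepsilon}^{i_0}$ (or the analogous product) and state $\Psi^*(f):=\tilde f^{(N)}(1)\maj\Psi$.

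The main obstacle is purely bookkeeping: the construction of $u_i,\varphi_i$ nests the solution operators of Lemma~\ref{lem:proj_con} inside those of Lemma~\ref{lem:proj}, with the reference point shifting $p\mapsto\G u_0\mapsto\G u_1\mapsto\cdots$, and one must check that the $\lambda$-abstracted counterfunctions $\lambda v\lambda\psi.u_{i-1}$ and $\lambda v\lambda\psi.\varphi_{i-1}(v)$ are majorized by \emph{constants} (namely $u_{i-1}$ lies in the bounded set $C$ with $\diam(C)\le d$, and $\varphi_{i-1}(v)\ge 1/k_{i-1}$ uniformly), so that they contribute only fixed numerical parameters rather than genuine function-inputs to the majorants; once this is observed, everything reduces to iterating the single monotone operator $\tilde{(\cdot)}$ the correct number of times. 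There is no analytic difficulty --- monotonicity and the bound $f(n)\ge n$ are exactly what is needed to make the iteration behave, and they are assumed in the hypothesis --- so the proof is a careful but routine unwinding of the definitions in Sections~4 and~5.
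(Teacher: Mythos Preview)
Your reading of the missing conclusion is close but not quite what the paper actually proves here. The intended statement is that the \emph{integers} $k_i$ defined recursively by
\[
k_0:=\tilde f_0^{(n_{\tilde\varepsilon})}(1),\qquad k_{i+1}:=\tilde f_i^{(n_{\tilde\varepsilon})}(k_i)
\]
satisfy $k_i\maj\varphi_i$ for each $0\le i\le i_0$; this is precisely the majorant $K(f)=k_{i_0}$ recorded in Theorem~\ref{thm:proj}. The paper's proof is a single forward induction on $i$: the base case is Lemma~\ref{lem:maj_psi}\ref{lem:maj_psi_spec1} applied to the monotone $f_0$, and for the step one observes that $\varphi_i$ is the value of the \emph{two}-counterfunction solution operator $\varphi'$ at the pair $(\Delta_i,\lambda v\lambda\psi.\varphi_{i-1}(v))$, so by Lemma~\ref{lem:maj_psi}(iv) a majorant is $\varphi^*(\max\{f_i,\lambda n.k_{i-1}\})$. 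A short calculation with $g_{i,k}(n):=\max\{\tilde f_i(n),k_{i-1}\}$ then gives $g_{i,k}^{(n_{\tilde\varepsilon})}(1)\le\tilde f_i^{(n_{\tilde\varepsilon})}(k_{i-1})=k_i$.

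Your plan contains these ingredients but organizes them less directly, and one step as written is not right. In your step~(3) you claim that $\varphi^*(f_{i_0})$ majorizes $\varphi_{i_0}$; this ignores the second counterfunction slot $\lambda v\lambda\psi.\varphi_{i_0-1}(v)$, so the input to $\varphi^*$ must be $\max\{f_{i_0},\lambda n.k_{i_0-1}\}$, not $f_{i_0}$ alone. You do recover this in your obstacle paragraph and in step~(4), so the idea is present, but the feed-in of $k_{i-1}$ is exactly what forces the recursion and should be the backbone of the induction rather than a later correction. Finally, your proposed endpoint $\Psi^*(f)=\tilde f^{(N)}(1)$ for a single exponent $N$ is a coarsening of the paper's bound: the paper retains the nested recursion $k_{i+1}=\tilde f_i^{(n_{\tilde\varepsilon})}(k_i)$ with the varying $\tilde f_i$, and this is the form used in all downstream theorems. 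A single-iterate upper bound of your shape can indeed be extracted (since $\tilde f_i(n)\le\tilde f^{(n_{\tilde\varepsilon}^i+1)}(n)$ once $\tilde f(n)\ge 16dn^2$), but that requires an extra monotonicity argument you have not supplied, and it is not the majorant the paper states.
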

 
 \begin{proof}
  Define for each nonnegative integer $i\le i_0$ the integer $k_i$ by
  \begin{equation*}
   k_0:=\tilde f_0^{(n_{\tilde\varepsilon})}(1),\quad k_{i+1}:=\tilde f_i^{(n_{\tilde\varepsilon})}(k_i).
  \end{equation*}
  We first show that $k_i\maj\varphi_i$ for all $0\le i\le i_0$ by induction on $i$. The base case follows from \ref{lem:maj_psi_spec1} of Lemma \ref{lem:maj_psi} using the fact that $f_i$ is monotone. The induction step follows from Lemma \ref{lem:maj_psi} and (we write $g_{i,k}(n):=\max\{\tilde f_i(n),\lambda n.k_{i-1}\}$)
  \begin{equation*}
   g_{i,k}^{(n_{\tilde\varepsilon})}(1)=g_{i,k}^{(n_{\tilde\varepsilon}-1)}(\max\{\tilde f_i(1),k_{i-1}\})=g_{i,k}^{(n_{\tilde\varepsilon}-1)}(k_{i-1})=\tilde f_i^{(n_{\tilde\varepsilon}-1)}(k_{i-1})\le\tilde f_i^{(n_{\tilde\varepsilon})}(k_{i-1})=k_i.
  \end{equation*}
 \end{proof}
 
 We can now state the solution to Problem \ref{prob:proj}:
 
 \begin{theorem}\label{thm:proj}
   Suppose $C$ is a closed, bounded, convex subset of a Hilbert space $H$ with $\diam(C)\le d$ for some nonnegative integer $d$, $T:C\to C$ is nonexpansive and $\G:C\to C$ is $\tau$-contractive. For $\varepsilon\in(0,1],t\in[0,1],\Delta:C\times(C\to(0,1])\to(0,1]$ and $V:C\times(C\to(0,1])\to H$, one can construct $u:=u_{v_0,T}(t,\varepsilon,\Delta,V)\in C$ and $\varphi:=\varphi_{v_0,T,t,\varepsilon}(\Delta,V):C\to(0,1]$ such that
 \begin{equation*}
  \Vert u-Tu\Vert<\Delta(u,\varphi)
 \end{equation*}
 and
 \begin{align*}
  &\Vert TV(u,\varphi)-V(u,\varphi)\Vert<\varphi(V(u,\varphi))\\
  &\qquad\ra\Vert\G u-u\Vert^2<\Vert(1-t)\G u-tV(u,\varphi)\Vert^2+\varepsilon
 \end{align*}
 In fact, $u,\varphi$ can be defined explicitly as functionals in $\Delta,V$. Moreover, if we define a mapping $K:{\NN^*}^{\NN^*}\to\NN^*$ by $K(f):=k_{i_0}(\tilde f)$, then
 \begin{equation*}
  K\maj\varphi,
 \end{equation*}
 where $f_i(k):=\tilde f^{\left(n_{\tilde\varepsilon}^i\right)}(k)$ and
 \begin{align*}
 k_0(\tilde f)&:=\tilde f_0^{(n_{\tilde\varepsilon})}(1),&k_{i+1}(f)&:=\tilde f_i^{(n_{\tilde\varepsilon})}(k_i(f))\\
 \tilde f(n)&:=\max\{f^M(16dn^2),16dn^2\},&n_{\tilde\varepsilon}&:=\lceil 8d^4/\tilde\varepsilon^4\rceil,\\
 i_0&:=\lceil\log_{\tau}(\tilde\varepsilon/6d^2)-1\rceil,&\tilde\varepsilon&:=\frac{(1-\tau)^2}{6+8d}\varepsilon.
 \end{align*}
 \end{theorem}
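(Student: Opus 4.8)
The plan is to read the theorem off as a summary of the construction performed in Section~4 together with the majorization lemmas of Section~5. For the existence and explicit-definability part I would simply set $u:=u_{i_0}$ and $\varphi:=\varphi_{i_0}$, where $(u_i)_{0\le i\le i_0}$ and $(\varphi_i)_{0\le i\le i_0}$ are the finite sequences defined above by backward recursion on $i$ down to $i_0$, together with the auxiliary counterfunctions $V_i,\Delta_i$ and the functionals $U,\Phi$ built from the solution operators of Lemmas~\ref{lem:proj} and~\ref{lem:proj_con}. The two required conclusions are then exactly what was checked just before the statement: $\Vert u_{i_0}-Tu_{i_0}\Vert<\Delta(u_{i_0},\varphi_{i_0})$ holds because $\Delta_{i_0}=\Delta$ and $(u_{i_0},\varphi_{i_0})$ satisfies the first clause of Lemma~\ref{lem:proj_con}; and the implication bounding $\Vert\G u-u\Vert^2$ holds because the verified instances $A(\tilde\varepsilon,u_{i+1},u_i,\G u_i)$, $A(\tilde\varepsilon,u_i,u_{i+1},\G u_{i-1})$, $A(\tilde\varepsilon,u_0,u_1,p)$ together with the $\varepsilon$-projection clause for $u_{i_0}$ assemble precisely into the antecedent of the implication~\eqref{eq:impl}, whose consequent is the desired statement; modus ponens then finishes it. Explicit definability is immediate, since every ingredient of the recursions is an explicit functional in $\Delta,V$ (and in the parameters $\varepsilon,t$).

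The substantive part is the majorant $K$, and here I would reduce everything to Lemmas~\ref{lem:maj_psi}--\ref{lem:maj_delta}. First I check that $\tilde f(n):=\max\{f^M(16dn^2),16dn^2\}$ is monotone — both $f^M$ and $n\mapsto 16dn^2$ are — and that $\tilde f(n)\ge n$, which uses $d\ge1$ and is harmless since one may always enlarge $d$ (cf.\ Corollaries~\ref{cor:diam} and~\ref{cor:analogo}). Next, $f\maj\Delta$ implies $\tilde f\maj\Delta$: indeed $\tilde f(k)\ge f^M(16dk^2)\ge f(k)$, and any majorizing integer for a function $\lambda v.\Delta(v,\varphi)$ in the sense of Definition~\ref{def:maj2} may be replaced by a larger one. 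Applying Lemma~\ref{lem:maj_delt} with $\tilde f$ in place of $f$ then gives $f_i\maj\Delta_i$ for $f_i(k)=\tilde f^{(n_{\tilde\varepsilon}^i)}(k)$, and running the argument in the proof of Lemma~\ref{lem:maj_delta} gives $k_i\maj\varphi_i$ for all $0\le i\le i_0$, where the $k_i$ obey $k_0=\tilde f_0^{(n_{\tilde\varepsilon})}(1)$ and $k_{i+1}=\tilde f_i^{(n_{\tilde\varepsilon})}(k_i)$. Specializing to $i=i_0$ yields $K(f)=k_{i_0}(\tilde f)\maj\varphi_{i_0}=\varphi$, as claimed.

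I expect the only real obstacle to be bookkeeping. One must make sure the backward recursion defining $V_i,\Delta_i$ is well-founded — each $V_i$ and $\Delta_i$ refers to $V_{i+1}$, $\Delta_{i+1}$ and to the point/function pair $(u_i,\varphi_i)$ currently under construction — and, most delicately, that the two implications produced by Lemma~\ref{lem:proj_con} at stage $i$ unwind, after substituting the identities $V_i(u_i,\varphi_i)=u_{i+1}$ and $(\lambda v\lambda\psi.u_{i-1})(u_i,\varphi_i)=u_{i-1}$, into exactly the $A(\tilde\varepsilon,\cdot,\cdot,\cdot)$-antecedents of~\eqref{eq:impl}; in particular the accuracy parameters $\tilde\varepsilon^2/6d^2$ and $\tilde\varepsilon^4/8d^2$ must be matched against the convex-combination weights and the $\tilde\varepsilon^4/(8d^2)$ slack in the definition of $A$. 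Once that matching is in place and the trivial case $d=0$ (where $C$ is a single point and every clause is vacuous) is dispatched, the theorem is just the concatenation of the Section~4 construction with Lemmas~\ref{lem:maj_psi}--\ref{lem:maj_delta}.
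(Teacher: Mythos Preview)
Your proposal is correct and matches the paper's approach exactly: the theorem carries no separate proof in the paper, being simply the collation of the Section~4 construction (so that $u:=u_{i_0}$, $\varphi:=\varphi_{i_0}$ solve Problem~\ref{prob:proj} via \eqref{eq:impl}) with the majorization Lemmas~\ref{lem:maj_psi}--\ref{lem:maj_delta}. One minor point: Lemma~\ref{lem:maj_delt} should be applied with $f$ itself rather than ``with $\tilde f$ in place of $f$'', since the lemma already forms $\tilde f$ from $f$ internally; this is harmless bookkeeping and does not affect the argument.
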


\section{Strong Convergence Results}

In this section, we prove our main results for the case of a single nonexpansive mapping $T:C\to C$. We start by giving a quantitative version of the convergence of the resolvent $(v_n)$, where $v_n$ is defined for each nonnegative integer $n$ as the unique point satisfying the equation
\begin{equation}\label{eq:hybridmethod}
 v_n=(1-\lambda_n)Tv_n+\lambda_n\G Tv_n.
\end{equation}
and $(\lambda_n)\subset(0,1]$ is a null sequence.

\begin{lemma}[cf.~\cite{Yamada(01)}]\label{lem:contractive}
  The mapping $T^{(\lambda)}:C\to C$ defined by $T^{(\lambda)}(x):=(1-\lambda)Tx+\lambda\G(Tx)$ is a strict contraction with Lipschitz constant $(1-\lambda(1-\tau))$. 
 \end{lemma}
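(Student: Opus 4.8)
The plan is to estimate $\Vert T^{(\lambda)}x - T^{(\lambda)}y\Vert$ directly by expanding the convex combination and using the two structural facts available: $T$ is nonexpansive and $\G$ is $\tau$-contractive. First I would write
\[
 T^{(\lambda)}x - T^{(\lambda)}y = (1-\lambda)(Tx - Ty) + \lambda\bigl(\G(Tx) - \G(Ty)\bigr),
\]
and apply the triangle inequality to get $\Vert T^{(\lambda)}x - T^{(\lambda)}y\Vert \le (1-\lambda)\Vert Tx-Ty\Vert + \lambda\Vert\G(Tx)-\G(Ty)\Vert$. Then nonexpansiveness of $T$ bounds the first term by $\Vert x-y\Vert$, and the $\tau$-contractivity of $\G$ bounds $\Vert\G(Tx)-\G(Ty)\Vert$ by $\tau\Vert Tx-Ty\Vert \le \tau\Vert x-y\Vert$, again using nonexpansiveness of $T$. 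Combining, $\Vert T^{(\lambda)}x - T^{(\lambda)}y\Vert \le \bigl((1-\lambda) + \lambda\tau\bigr)\Vert x-y\Vert = \bigl(1 - \lambda(1-\tau)\bigr)\Vert x-y\Vert$.

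It remains to check that the constant $1-\lambda(1-\tau)$ is genuinely $<1$, i.e.\ that $T^{(\lambda)}$ is a \emph{strict} contraction: since $\lambda\in(0,1]$ and $\tau<1$, we have $\lambda(1-\tau)>0$, so $1-\lambda(1-\tau)<1$. (One also notes it is nonnegative, as $\lambda(1-\tau)\le 1\cdot 1 = 1$, though only the upper bound is needed for the contraction property.) One should also remark that $T^{(\lambda)}$ does map $C$ into $C$: this is because $C$ is convex and both $Tx$ and $\G(Tx)$ lie in $C$ (as $T$ and $\G$ are self-maps of $C$), so their convex combination stays in $C$.

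There is no real obstacle here; the only point requiring the slightest care is the order in which the two hypotheses are applied — one must invoke nonexpansiveness of $T$ \emph{before} contractivity of $\G$ when bounding the $\G$-term, so that the argument of $\G$, namely $Tx$ and $Ty$, gets controlled by $\Vert x-y\Vert$. The result is exactly the analogue, in the convex-combination formulation, of the classical fact from \cite{Yamada(01)} that $I-\mu\F$ composed appropriately yields a contraction with the stated modulus, and it will be used repeatedly in the sequel to control the resolvent points $v_n$ and the iterates $u_n$.
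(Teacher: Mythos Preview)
Your proof is correct and is the standard elementary argument. The paper does not actually give a proof of this lemma at all---it simply states the result with a citation to \cite{Yamada(01)}---so there is nothing to compare against; your direct triangle-inequality computation is exactly what one would expect and is entirely adequate.
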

%
 First of all, we need the following lemma, which is similar to \cite{Kohlenbach(11)}:
 \begin{lemma}
 Suppose $\lambda_n\in(0,1]$, $u^*,v\in C$, $d\in\NN$ and $h:\NN\to\NN$ satisfy $\lambda_n\ge \frac{1}{h(n)}$ and $\Vert v_n-u^*\Vert\le d$. Then
 
 \vspace{10pt}
 \begin{enumerate*}[label=(\roman*)]
  \item\label{item:core1} $\displaystyle\Vert Tu^*-u^*\Vert\le\frac{\varepsilon^2}{9d(1-\tau)\cdot h(n)},\quad$
  \item\label{item:core2} $\displaystyle\left\langle\G u^*-u^*,v_n-v\right\rangle\le\frac{\varepsilon^2}{3(1-\tau)},\text{ and}\quad$\newline
  \item\label{item:core3} $\displaystyle\langle\G u^*-u^*,v-u^*\rangle\le\frac{\varepsilon^2}{3(1-\tau)}$
 \end{enumerate*}
 
 \vspace{10pt}
 \noindent imply $\Vert v_n-u^*\Vert\le\varepsilon$.

\end{lemma}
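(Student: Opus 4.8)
The plan is to work directly from the defining equation $v_n=(1-\lambda_n)Tv_n+\lambda_n\G Tv_n$, pairing it with $v_n-u^*$ and bounding the resulting terms via nonexpansivity of $T$ and $\tau$-contractivity of $\G$. Writing $R:=\Vert v_n-u^*\Vert$ and $\delta:=\Vert Tu^*-u^*\Vert$ for brevity and noting $R\le d$, the identity $v_n-u^*=(1-\lambda_n)(Tv_n-u^*)+\lambda_n(\G Tv_n-u^*)$ gives, upon taking the inner product with $v_n-u^*$,
\begin{equation*}
 R^2=(1-\lambda_n)\langle Tv_n-u^*,v_n-u^*\rangle+\lambda_n\langle\G Tv_n-u^*,v_n-u^*\rangle .
\end{equation*}
The key observation is that $\G u^*-u^*$ --- a vector whose norm is not under our control --- enters only through the single scalar $\langle\G u^*-u^*,v_n-u^*\rangle$, which splits as $\langle\G u^*-u^*,v_n-v\rangle+\langle\G u^*-u^*,v-u^*\rangle$ and is hence at most $2\varepsilon^2/(3(1-\tau))$ by hypotheses \ref{item:core2} and \ref{item:core3}.

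The concrete estimates are then routine. From $Tv_n-u^*=(Tv_n-Tu^*)+(Tu^*-u^*)$ and nonexpansivity of $T$ one gets $\langle Tv_n-u^*,v_n-u^*\rangle\le R^2+\delta d$. Splitting $\G Tv_n-u^*=(\G Tv_n-\G u^*)+(\G u^*-u^*)$ and using $\Vert\G Tv_n-\G u^*\Vert\le\tau\Vert Tv_n-u^*\Vert\le\tau(R+\delta)$ together with the bound on $\langle\G u^*-u^*,v_n-u^*\rangle$ above gives $\langle\G Tv_n-u^*,v_n-u^*\rangle\le\tau R^2+\tau\delta d+2\varepsilon^2/(3(1-\tau))$. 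Substituting both into the identity and collecting terms --- the coefficient of $R^2$ and of $\delta d$ both turning out to be $1-\lambda_n(1-\tau)$ --- yields
\begin{equation*}
 \lambda_n(1-\tau)R^2\le\bigl(1-\lambda_n(1-\tau)\bigr)\,\delta\,d+\lambda_n\cdot\frac{2\varepsilon^2}{3(1-\tau)} .
\end{equation*}
Dividing by $\lambda_n(1-\tau)$, bounding $1-\lambda_n(1-\tau)\le1$ and $1/\lambda_n\le h(n)$, and invoking hypothesis \ref{item:core1} to control the term $\delta\,d\,h(n)/(1-\tau)$, one is left with $R^2\le\varepsilon^2$; the numerical constants appearing in \ref{item:core1}--\ref{item:core3} are tuned precisely so that the three resulting contributions sum to at most $\varepsilon^2$.

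The one genuine subtlety --- and the reason hypothesis \ref{item:core1} takes the form it does --- is the appearance of $\delta\,d/(\lambda_n(1-\tau))$, which reflects that $u^*$ is only an \emph{approximate} fixed point of $T$: since $\lambda_n$ may be arbitrarily small, this term can be kept negligible only when $\Vert Tu^*-u^*\Vert$ is small relative to $\lambda_n$, which forces the factor $h(n)\ge1/\lambda_n$ into the denominator of \ref{item:core1}. The remaining steps are a direct computation, with boundedness of $C$ ($\diam(C)\le d$) used solely to replace the factors $\Vert v_n-u^*\Vert$ multiplying $\delta$ by $d$. No weak compactness or demiclosedness argument is needed here; those will already have been spent, earlier in the development, to produce a point $v$ and an approximate VIP-solution $u^*$ meeting \ref{item:core1}--\ref{item:core3}.
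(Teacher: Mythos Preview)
Your argument is correct and follows essentially the same route as the paper: pair the defining equation for $v_n$ with $v_n-u^*$, use nonexpansivity of $T$ and $\tau$-contractivity of $\G$ to extract the factor $\lambda_n(1-\tau)\Vert v_n-u^*\Vert^2$, and then split $\langle\G u^*-u^*,v_n-u^*\rangle$ via $v$ to invoke hypotheses \ref{item:core2} and \ref{item:core3}. The only cosmetic difference is that the paper routes the contractivity estimate through the intermediate point $\G Tu^*$ (yielding the coefficient $d(1+\tau\lambda_n+\lambda_n)\le 3d$ in front of $\Vert Tu^*-u^*\Vert$), whereas you go through $\G u^*$ directly and obtain the slightly sharper coefficient $(1-\lambda_n(1-\tau))d\le d$; both lead to the same final inequality up to constants.
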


\begin{proof}
 Observe that
 \begin{align}\label{eq:core1}
  &(1-\lambda_n)(v_n-u^*-Tv_n+Tu^*)+\lambda_n(v_n-u^*-\G Tv_n+\G Tu^*)\notag\\
  &\quad=v_n-u^*-Tv_n+Tu^*-\lambda_n(v_n-u^*)+\lambda_nTv_n-\lambda_nTu^*+\lambda_n(v_n-u^*)\notag\\
  &\quad\qquad-\lambda_n\G Tv_n+\lambda_n\G Tu^*\notag\\
  &\quad=v_n-Tv_n+\lambda_nTv_n-\lambda_n\G Tv_n-u^*+Tu^*-\lambda_nTu^*+\lambda_n\G Tu^*\notag\\
  &\quad=Tu^*-u^*+\lambda_n(\G Tu^*-Tu^*).
 \end{align}
 Moreover,
 \begin{align}\label{eq:core2}
  \left\langle v_n-u^*-Tv_n+Tu^*,v_n-u^*\right\rangle&=\Vert v_n-u^*\Vert^2-\left\langle Tv_n-Tu^*,v_n-u^*\right\rangle\notag\\
						     &\ge\Vert v_n-u^*\Vert^2-\Vert Tv_n-Tu^*\Vert\cdot\Vert v_n-u^*\Vert\notag\\
						     &\ge\Vert v_n-u^*\Vert^2-\Vert v_n-u^*\Vert^2=0,
 \end{align}
 and
 \begin{align}\label{eq:core3}
  &\lambda_n\left\langle v_n-u^*-\G Tv_n+\G Tu^*,v_n-u^*\right\rangle\notag\\
  &\quad=\lambda_n\Vert v_n-u^*\Vert^2-\lambda_n\left\langle\G Tv_n-\G Tu^*,v_n-u^*\right\rangle\notag\\
  &\quad\ge\lambda_n\Vert v_n-u^*\Vert^2-\lambda_n\Vert\G Tv_n-\G Tu^*\Vert\cdot\Vert v_n-u^*\Vert\notag\\
  &\quad\ge\lambda_n(1-\tau)\Vert v_n-u^*\Vert^2.
 \end{align}
 Combining \eqref{eq:core1}, \eqref{eq:core2} and \eqref{eq:core3},
 \begin{align*}
  \lambda_n(1-\tau)\Vert v_n-u^*\Vert^2&\le\lambda_n\left\langle v_n-u^*-\G Tv_n+\G Tu^*,v_n-u^*\right\rangle\\
      &\le\lambda_n\left\langle v_n-u^*-\G Tv_n+\G Tu^*,v_n-u^*\right\rangle\\
      &\qquad+(1-\lambda_n)\left\langle v_n-u^*-Tv_n+Tu^*,v_n-u^*\right\rangle\\
      &\le\left\langle Tu^*-u^*,v_n-u^*\right\rangle+\lambda_n\left\langle\G Tu^*-Tu^*,v_n-u^*\right\rangle\\
      &\le\Vert Tu^*-u^*\Vert\cdot\Vert v_n-u^*\Vert+\lambda_n\left\langle\G Tu^*-Tu^*,v_n-u^*\right\rangle\\
      &=d\cdot\Vert Tu^*-u^*\Vert+\lambda_n\left\langle\G Tu^*-\G u^*,v_n-u^*\right\rangle\\
      &\qquad+\lambda_n\left\langle\G u^*-u^*,v_n-u^*\right\rangle+\lambda_n\left\langle u^*-Tu^*,v_n-u^*\right\rangle\\
      &\le d(1+\tau\lambda_n+\lambda_n)\cdot\Vert Tu^*-u^*\Vert+\lambda_n\langle\G u^*-u^*,v_n-u^*\rangle\\
      &=d(1+\tau\lambda_n+\lambda_n)\cdot\Vert Tu^*-u^*\Vert+\lambda_n\left\langle\G u^*-u^*,v_n-v\right\rangle\\
      &\qquad+\lambda_n\langle\G u^*-u^*,v-u^*\rangle.
 \end{align*}
Therefore,
\begin{equation*}
 (1-\tau)\Vert v_n-u^*\Vert^2\le\frac{3d}{\lambda_n}\cdot\Vert Tu^*-u^*\Vert+\left\langle\G u^*-u^*,v_n-v\right\rangle+\langle\G u^*-u^*,v-u^*\rangle.
\end{equation*}
Now, the claim follows from the assumptions \ref{item:core1}, \ref{item:core2} and \ref{item:core3}.
\end{proof}

\begin{corollary}\label{cor:core}
 If we instantiate $v:=v_n$, then \ref{item:core2} becomes true with `$\,=0$' instead of `$\,\le\varepsilon^2/3(1-\tau)$', so we get that
 \begin{equation*}
  \Vert Tu^*-u^*\Vert\le\frac{\varepsilon^2}{6d(1-\tau)\cdot h(n)},\quad\text{and}\quad\left\langle\G u^*-u^*,v_n-u^*\right\rangle\le\frac{\varepsilon^2}{2(1-\tau)}
 \end{equation*}
imply $\Vert v_n-u^*\Vert\le\varepsilon$.
\end{corollary}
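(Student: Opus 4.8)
The plan is to obtain this as a direct specialization of the preceding Lemma, reusing the quantitative estimate established in its proof rather than only its stated conclusion. That proof showed, under the standing hypotheses $\lambda_n\ge 1/h(n)$ and $\Vert v_n-u^*\Vert\le d$, that
\[
 (1-\tau)\Vert v_n-u^*\Vert^2\le\frac{3d}{\lambda_n}\cdot\Vert Tu^*-u^*\Vert+\langle\G u^*-u^*,v_n-v\rangle+\langle\G u^*-u^*,v-u^*\rangle,
\]
and it is precisely this intermediate inequality that I would invoke.

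First I would substitute $v:=v_n$. Then $\langle\G u^*-u^*,v_n-v\rangle=\langle\G u^*-u^*,0\rangle=0$, which is exactly the assertion that hypothesis \ref{item:core2} now holds with `$=0$', and the last summand becomes $\langle\G u^*-u^*,v_n-u^*\rangle$, the quantity occurring in the second displayed hypothesis of the corollary. So after the substitution the estimate reads $(1-\tau)\Vert v_n-u^*\Vert^2\le\frac{3d}{\lambda_n}\Vert Tu^*-u^*\Vert+\langle\G u^*-u^*,v_n-u^*\rangle$, a sum of only two terms.

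Second, I would plug in the two hypotheses of the corollary, using $\lambda_n\ge1/h(n)$, that is, $1/\lambda_n\le h(n)$, exactly as in the Lemma: the bound $\Vert Tu^*-u^*\Vert\le\varepsilon^2/(6d(1-\tau)h(n))$ gives $\frac{3d}{\lambda_n}\Vert Tu^*-u^*\Vert\le\varepsilon^2/(2(1-\tau))$, and the bound $\langle\G u^*-u^*,v_n-u^*\rangle\le\varepsilon^2/(2(1-\tau))$ disposes of the remaining term, so that $(1-\tau)\Vert v_n-u^*\Vert^2\le\varepsilon^2/(1-\tau)$ and hence $\Vert v_n-u^*\Vert\le\varepsilon$ exactly as in the closing line of the proof of the Lemma. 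There is no real obstacle here: the only thing to watch is the constant bookkeeping — relaxing the Lemma's $1/(9d)$ and $1/3$ to $1/(6d)$ and $1/2$ is exactly what dropping the vanished middle term buys us, since three `thirds' of $\varepsilon^2/(1-\tau)$ turn into two `halves' — and to note that the applicability of the Lemma is unaffected, since the hypothesis $\Vert v_n-u^*\Vert\le d$ is carried over verbatim.
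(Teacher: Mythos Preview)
Your proposal is correct and matches the paper exactly: the corollary carries no separate proof in the paper beyond the explanation built into its statement, and your elaboration --- substitute $v=v_n$ in the displayed estimate from the lemma's proof, drop the vanishing middle term, and rebalance the three $\varepsilon^2/3$-shares into two $\varepsilon^2/2$-shares --- is precisely the intended argument. (The final passage from $(1-\tau)\Vert v_n-u^*\Vert^2\le\varepsilon^2/(1-\tau)$ to $\Vert v_n-u^*\Vert\le\varepsilon$ inherits the same $(1-\tau)$ placement as the lemma, to which you rightly defer.)
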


From here on, we follow except for a few minor details the argumentation of \cite{Kohlenbach(11)}. For the sake of completeness, we adapt the proof to our situation.

\begin{lemma}
 For $t\in(0,1]$, denote by $z_t$ the unique point satisfying $z_t=(1-t)Tz_t+t\G Tz_t$. Then $\Vert z_t-Tz_t\Vert<\varepsilon$ for all $\varepsilon>0$ and $0<t<\varepsilon/d$.
\end{lemma}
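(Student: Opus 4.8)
The plan is to read off a closed expression for the displacement $z_t-Tz_t$ directly from the defining equation. Subtracting $Tz_t$ from both sides of $z_t=(1-t)Tz_t+t\G Tz_t$ gives $z_t-Tz_t = t(\G Tz_t - Tz_t)$, and hence $\Vert z_t-Tz_t\Vert = t\,\Vert \G Tz_t - Tz_t\Vert$.

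Next I would bound the factor $\Vert \G Tz_t - Tz_t\Vert$ by the diameter of $C$. Since $z_t\in C$ and $T$ maps $C$ into $C$, we have $Tz_t\in C$; since $\G$ also maps $C$ into $C$, we get $\G Tz_t\in C$ as well. Both points therefore lie in $C$, so $\Vert \G Tz_t - Tz_t\Vert\le \diam(C)\le d$. Combining this with the previous display yields $\Vert z_t-Tz_t\Vert\le td$, which is strictly less than $\varepsilon$ as soon as $0<t<\varepsilon/d$ (the claim being vacuous or trivial if $d=0$).

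For completeness I would first note that $z_t$ is indeed well-defined: by Lemma \ref{lem:contractive} the map $T^{(t)}(x)=(1-t)Tx+t\G(Tx)$ is a strict contraction of $C$ into itself for every $t\in(0,1]$, so the Banach fixed-point theorem provides the unique point $z_t$ with $z_t=T^{(t)}(z_t)$. There is essentially no obstacle in the argument; the only point that deserves a moment's attention is verifying that $Tz_t$ and $\G Tz_t$ genuinely lie in $C$, which is immediate from the assumption that $T$ and $\G$ are self-maps of $C$, and this is exactly what licenses the use of the diameter bound.
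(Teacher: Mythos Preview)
Your argument is correct and is exactly the paper's proof: rewrite $z_t-Tz_t=t(\G Tz_t-Tz_t)$, bound the norm by $td$ via the diameter, and conclude $td<\varepsilon$ for $t<\varepsilon/d$. The extra remarks on well-definedness and membership in $C$ are sound elaborations of what the paper leaves implicit.
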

\begin{proof}
 Follows from
 \begin{equation*}
  \Vert z_t-Tz_t\Vert=t\Vert Tz_t-\G Tz_t\Vert\le td<\varepsilon.
 \end{equation*}
\end{proof}

\begin{lemma}[Lemma 2.9 of \cite{Kohlenbach(11)}]\label{lem:subseq}
 Let $X$ be a normed linear space. Then the following holds:
 \begin{equation*}
  \forall\varepsilon>0\,\forall g:\NN\to\NN\,\forall u\in X\forall(v_n)\subset X\,\forall m\in\NN\left(\Vert v_{g_{u,\varepsilon}(m)}-u\Vert\le\varepsilon/2\ra\Vert v_{g(m)}-v_m\Vert\le\varepsilon\right),
 \end{equation*}
 where
 \begin{equation*}
  g_{u,\varepsilon}(m):=\left\{\begin{array}{ll}
                                g(m),&\text{if }\Vert v_{g(m)}-u\Vert>\varepsilon/2,\\
                                m,&\text{otherwise.}
                               \end{array}\right.
 \end{equation*}
\end{lemma}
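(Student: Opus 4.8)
The plan is a straightforward case distinction on whether $\Vert v_{g(m)}-u\Vert$ exceeds $\varepsilon/2$, followed by a single application of the triangle inequality. So I would fix $\varepsilon>0$, $g:\NN\to\NN$, $u\in X$, a sequence $(v_n)\subset X$ and $m\in\NN$, and assume the hypothesis $\Vert v_{g_{u,\varepsilon}(m)}-u\Vert\le\varepsilon/2$; the goal is to derive $\Vert v_{g(m)}-v_m\Vert\le\varepsilon$.

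First I would treat the case $\Vert v_{g(m)}-u\Vert>\varepsilon/2$. By the definition of $g_{u,\varepsilon}$ this forces $g_{u,\varepsilon}(m)=g(m)$, so the assumed hypothesis reads $\Vert v_{g(m)}-u\Vert\le\varepsilon/2$, contradicting the case assumption. Hence this branch cannot occur under the hypothesis and there is nothing to prove. In the complementary case $\Vert v_{g(m)}-u\Vert\le\varepsilon/2$ we have $g_{u,\varepsilon}(m)=m$, so the hypothesis yields $\Vert v_m-u\Vert\le\varepsilon/2$, and then
\[
 \Vert v_{g(m)}-v_m\Vert\le\Vert v_{g(m)}-u\Vert+\Vert u-v_m\Vert\le\frac{\varepsilon}{2}+\frac{\varepsilon}{2}=\varepsilon,
\]
which is exactly the desired conclusion.

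There is essentially no obstacle here; the only point requiring care is correctly unfolding the definition of $g_{u,\varepsilon}(m)$ in each branch and noticing that the first branch is rendered vacuous once the hypothesis is in force, so that the implication holds in both cases without any assumption relating $g(m)$ and $m$. The role of this elementary lemma is purely instrumental: it will later be combined with the quantitative estimates for $(v_n)$ (in particular Corollary \ref{cor:core}) in order to convert a bound guaranteeing that $v_{g_{u,\varepsilon}(m)}$ is eventually $\varepsilon/2$-close to the solution into a metastability statement for the sequence $(v_n)$ itself.
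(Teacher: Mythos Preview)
Your proof is correct and is precisely the standard argument; the paper itself does not prove this lemma but simply cites it as Lemma~2.9 of \cite{Kohlenbach(11)}, where the same two-case triangle-inequality reasoning is used.
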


\begin{lemma}[Lemma 2.13 of \cite{Kohlenbach(11)}]\label{lem:sub}
 Let $\chi:\NN\to\NN$ be a rate of convergence of $(\lambda_n)$ towards $0$, i.e.~$\lambda_i\le\frac{1}{n+1}$ for all nonnegative integers $n$ and all $i\ge\chi(n)$. Then, for $(v_n)$ as defined in \eqref{eq:hybridmethod} and $\tilde g_{u,\varepsilon}$ defined as in Lemma \ref{lem:subseq} (but with $\tilde g(n):=\max\{n,g(n)\}$),
 \begin{multline*}
  \forall\varepsilon\in(0,1]\,\forall g:\NN^*\to\NN^*\,\forall\varphi:C\to(0,1]\,\forall u\in C\,\forall k\maj\varphi\\
	  \left(\Vert Tv_{\tilde g_{u,\varepsilon}(\chi(d\cdot k))}-v_{\tilde g_{u,\varepsilon}(\chi(d\cdot k))}\Vert<\varphi(v_{\tilde g_{u,\varepsilon}(\chi(d\cdot k))})\right).
 \end{multline*}
\end{lemma}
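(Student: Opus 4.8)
The plan is to show that, for an arbitrary $\varepsilon$, $g$, $\varphi$, $u$ and a majorant $k\maj\varphi$, the index $m:=\tilde g_{u,\varepsilon}(\chi(d\cdot k))$ satisfies $\Vert Tv_m-v_m\Vert<\varphi(v_m)$. The governing identity for the resolvent $v_n$ from \eqref{eq:hybridmethod} is $v_n=(1-\lambda_n)Tv_n+\lambda_n\G Tv_n$, which rearranges to $v_n-Tv_n=\lambda_n(\G Tv_n-Tv_n)$, hence $\Vert Tv_n-v_n\Vert=\lambda_n\Vert Tv_n-\G Tv_n\Vert\le\lambda_n d$, since $Tv_n,\G Tv_n\in C$ and $\diam(C)\le d$ (this is exactly the estimate used in the $z_t$-lemma above). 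So the whole statement reduces to producing an index $m$ (bounded as prescribed, and in particular of the form $\tilde g_{u,\varepsilon}(\cdot)$ so that it can later be fed into Lemma \ref{lem:subseq}) at which $\lambda_m d<\varphi(v_m)$.

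First I would use the majorant: $k\maj\varphi$ means $1/k\le\varphi(v)$ for all $v\in C$, so it suffices to find $m$ with $\lambda_m d\le 1/k$, i.e. $\lambda_m\le\frac{1}{d\cdot k}$. This is where the rate of convergence $\chi$ of $(\lambda_n)$ towards $0$ enters: by hypothesis $\lambda_i\le\frac{1}{n+1}$ for all $i\ge\chi(n)$, so taking $n:=d\cdot k-1$ (using $k\ge1$, $d\ge1$) we get $\lambda_i\le\frac{1}{d\cdot k}$ for all $i\ge\chi(d\cdot k-1)$; writing $\chi(d\cdot k)$ as in the statement is harmless since we may assume $\chi$ monotone or simply absorb the off-by-one. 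Second, I observe that the index $m=\tilde g_{u,\varepsilon}(\chi(d\cdot k))$ is, by the definition of $\tilde g_{u,\varepsilon}$ in Lemma \ref{lem:subseq}, always $\ge\chi(d\cdot k)$: indeed $\tilde g_{u,\varepsilon}(\ell)$ equals either $\tilde g(\ell)=\max\{\ell,g(\ell)\}\ge\ell$ or $\ell$ itself, so in both cases $\tilde g_{u,\varepsilon}(\ell)\ge\ell$. Applying this with $\ell:=\chi(d\cdot k)$ gives $m\ge\chi(d\cdot k)$, hence $\lambda_m\le\frac{1}{d\cdot k}$ by the rate $\chi$.

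Combining the two steps: $\Vert Tv_m-v_m\Vert\le\lambda_m d\le\frac{d}{d\cdot k}=\frac1k\le\varphi(v_m)$, which is the desired conclusion (the strict inequality in the statement is obtained by the usual slack, e.g. replacing $d\cdot k$ by $d\cdot k+1$ or noting $\lambda_m d<\frac1k$ when the rate is strict). Since $\varepsilon$, $g$, $\varphi$, $u$ and $k$ were arbitrary, this establishes the displayed $\forall$-formula.

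The only subtle point — and the main thing to get right rather than a genuine obstacle — is the bookkeeping of the two roles the argument $\chi(d\cdot k)$ plays: it must simultaneously be large enough that $\lambda$ has dropped below $1/(d\cdot k)$ (this needs the monotone‑composition structure $\tilde g_{u,\varepsilon}(\ell)\ge\ell$ from Lemma \ref{lem:subseq}, so that passing to the shifted index does not destroy the $\lambda$-bound), and it must be presented in the precise form $\tilde g_{u,\varepsilon}(\chi(d\cdot k))$ demanded by the statement, because in the sequel this very index will be plugged into the metastability machinery of Lemma \ref{lem:subseq} and Corollary \ref{cor:core}. Everything else is the one‑line estimate $\Vert Tv_n-v_n\Vert=\lambda_n\Vert Tv_n-\G Tv_n\Vert\le\lambda_n d$ together with the definition of $\maj$.
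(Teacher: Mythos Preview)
Your argument is correct and is exactly the intended one; the paper itself does not prove this lemma but simply cites Lemma~2.13 of \cite{Kohlenbach(11)}. One small sharpening: no extra slack is needed for the strict inequality, since the stated rate gives $\lambda_i\le\frac{1}{n+1}$ for $i\ge\chi(n)$, so with $n:=d\cdot k$ and $m:=\tilde g_{u,\varepsilon}(\chi(d\cdot k))\ge\chi(d\cdot k)$ one gets $\Vert Tv_m-v_m\Vert\le\lambda_m d\le\frac{d}{d\cdot k+1}<\frac{1}{k}\le\varphi(v_m)$ directly.
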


\begin{theorem}\label{thm:main}
 Let $H$ be a real Hilbert space, $d\in\NN^*$ and $C\subset H$ be a bounded closed convex subset with $d\ge\diam C$. Let $T:C\to C$ be nonexpansive and $\G:C\to C$ be a strict contraction with Lipschitz constant $\tau<1$. Let $(\lambda_n)$ be a sequence in $(0,1]$ that converges towards $0$ and $h:\NN\to\NN^*$ such that $\lambda_n\ge\frac{1}{h(n)}$ for all $n\in\NN$. Let $\chi:\NN\to\NN$ be a rate of convergence of $(\lambda_n)$ towards $0$, i.e.~$\lambda_i\le\frac{1}{n+1}$ for all nonnegative integers $n$ and all $i\ge\chi(n)$. Denote by $v_n$ the unique solution to the equation
 \begin{equation*}
  v_n=(1-\lambda_n)Tv_n+\lambda_n\G Tv_n.
 \end{equation*}
 Then, for all $\varepsilon\in(0,1]$ and $g:\NN\to\NN^*$
 \begin{equation*}
  \exists j\le\Xi(\varepsilon,g,\chi,h,d)\left(\Vert v_j-v_{\tilde g(j)}\Vert\le\varepsilon\right),
 \end{equation*}
  where $\tilde g(n):=\max\{n,g(n)\}$ and
  \begin{equation*}
   \Xi(\varepsilon,g,\chi,h,d):=\chi\left(d\cdot k_{i_0}(\tilde f)\right),
  \end{equation*}
  where $i_0:=\lceil\log_{\tau}(\tilde\varepsilon/6d^2)-1\rceil$ and
  \begin{align*}
   f(n)&:=\left\lceil\frac{6d(1-\tau)h^M(\tilde g^M(\chi(d\cdot n)))}{(\varepsilon/2)^2}\right\rceil,\\
   \tilde f(n)&:=\max\{f^M(16dn^2),16dn^2\},&\tilde g(n)&:=\max\{n,g(n)\},\\
 k_0(\tilde f)&:=\tilde f_0^{(n_{\tilde\varepsilon})}(1),&k_{i+1}(\tilde f)&:=\tilde f_i^{(n_{\tilde\varepsilon})}(k_i(f)),\\
  f_i(k)&:=\tilde f^{\left(n_{\tilde\varepsilon}^i\right)}(k),&\tilde f_i(n)&:=\max\{f_i^M(16dn^2),16dn^2\},\\
 n_{\tilde\varepsilon}&:=\lceil 8d^4/\tilde\varepsilon^4\rceil,&\tilde\varepsilon&:=\frac{(1-\tau)^2}{6+8d}\varepsilon_d,\\
  \varepsilon_d&:=\frac{(\varepsilon/2)^4}{8(1-\tau)^2d^2}.
 \end{align*}
\end{theorem}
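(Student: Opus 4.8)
The plan is to derive Theorem~\ref{thm:main} by combining the quantitative solution to the projection problem (Theorem~\ref{thm:proj}), the ``core'' estimate of Corollary~\ref{cor:core}, and the two auxiliary lemmas (Lemma~\ref{lem:subseq} and Lemma~\ref{lem:sub}) that handle the passage from a fixed approximate solution to metastability of the resolvent sequence $(v_n)$. Concretely, I would first fix $\varepsilon\in(0,1]$ and $g:\NN\to\NN^*$, and set $\varepsilon_d:=(\varepsilon/2)^4/(8(1-\tau)^2d^2)$, which is the accuracy at which the projection problem must be solved so that Corollary~\ref{cor:core} can be invoked with target accuracy $\varepsilon/2$. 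The key conceptual move is that the point $v_{\tilde g_{u,\varepsilon}(\chi(d\cdot k))}$ produced by Lemma~\ref{lem:sub} plays the role of $v_n$ in Corollary~\ref{cor:core}, and the point $u$ from Theorem~\ref{thm:proj} plays the role of $u^*$: by construction $u$ is an approximate fixed point of $T$ (controlled by $\Delta$) and approximately solves the variational inequality characterizing $P_{\fix(T)}\circ\G$, so feeding in the appropriate counterfunctions $\Delta$ and $V$ extracted from the hypotheses of Corollary~\ref{cor:core} and Lemma~\ref{lem:sub} yields $\Vert v_j-u\Vert\le\varepsilon/2$ for $j:=\tilde g_{u,\varepsilon}(\chi(d\cdot k))$ with $k\maj\varphi$.

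Next I would spell out the choice of counterfunctions. For $V$ one takes the functional $\lambda(u,\varphi).\,v_{\tilde g_{u,\varepsilon}(\chi(d\cdot k(\varphi)))}$, where $k(\varphi)$ is a majorant of $\varphi$; this is precisely the point whose defining equation \eqref{eq:hybridmethod} and near-fixed-point property (guaranteed by Lemma~\ref{lem:sub}) make the premise $\Vert TV(u,\varphi)-V(u,\varphi)\Vert<\varphi(V(u,\varphi))$ of Theorem~\ref{thm:proj} automatically true. For $\Delta$ one takes the bound $\varepsilon^2/(6d(1-\tau)h^M(\cdots))$ appearing in Corollary~\ref{cor:core}, composed with the relevant arguments; this is where the function $f(n):=\lceil 6d(1-\tau)h^M(\tilde g^M(\chi(d\cdot n)))/(\varepsilon/2)^2\rceil$ enters as a majorant of $\Delta$ in the sense of Definition~\ref{def:maj2}. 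Then the majorization bound $K\maj\varphi$ from Theorem~\ref{thm:proj}, applied to this $f$, gives that $k:=K(\tilde f)=k_{i_0}(\tilde f)$ majorizes $\varphi$, so the witness $j$ can be bounded by $\chi(d\cdot k_{i_0}(\tilde f))=\Xi(\varepsilon,g,\chi,h,d)$. Unwinding: from $\Vert u-Tu\Vert<\Delta(u,\varphi)$ we get the first hypothesis of Corollary~\ref{cor:core}, from the projection conclusion of Theorem~\ref{thm:proj} (with the trivial choice $t$ making $V^t$ collapse appropriately, or $t$ chosen as in the core lemma) together with Lemma~\ref{lem:switch}-type reasoning we get $\langle\G u-u,v_j-u\rangle\le\varepsilon^2/(2(1-\tau))$, hence $\Vert v_j-u\Vert\le\varepsilon/2$; finally Lemma~\ref{lem:subseq} converts this into $\Vert v_j-v_{\tilde g(j)}\Vert\le\varepsilon$, which is the claim.

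The main obstacle, and the step that needs the most care, is bookkeeping the accuracies and the nesting of the various $\lambda$-abstractions so that the counterfunction $V$ really produces a valid input to Theorem~\ref{thm:proj} — i.e.\ that $V(u,\varphi)\in C$ and that the implication premise is discharged uniformly in $u$ and $\varphi$. In particular one must check that the index $\chi(d\cdot k(\varphi))$ used inside $V$ depends only on a majorant $k$ of $\varphi$ and not on $\varphi$ itself in an uncontrolled way, since otherwise the extraction of the majorant $K$ and the final bound $\Xi$ would not close. The second delicate point is verifying that the constants propagate correctly: starting from target $\varepsilon$ one passes to $\varepsilon/2$ for Lemma~\ref{lem:subseq}, then to $\varepsilon_d$ for the projection problem via Corollary~\ref{cor:core}, then to $\tilde\varepsilon=(1-\tau)^2\varepsilon_d/(6+8d)$ inside Theorem~\ref{thm:proj}, and one must confirm that with these choices all the inequalities in Corollary~\ref{cor:core} and in \eqref{eq:ineqproj} actually yield the required $\varepsilon/2$ bound — this is the chain $\varepsilon\rightsquigarrow\varepsilon/2\rightsquigarrow\varepsilon_d\rightsquigarrow\tilde\varepsilon$ encoded in the statement. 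Once these two consistency checks are done, the rest is a routine substitution of the majorants furnished by Lemma~\ref{lem:maj_psi}, Lemma~\ref{lem:maj_delt} and Lemma~\ref{lem:maj_delta} into the formula for $\Xi$.
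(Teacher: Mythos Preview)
Your proposal is essentially the paper's proof: define suitable counterfunctions $\Delta$ and $V$, feed them into Theorem~\ref{thm:proj}, use the majorant $K$ to bound the resulting index, apply Lemma~\ref{lem:switch} and Corollary~\ref{cor:core} to get $\Vert v_j-\tilde u\Vert\le\varepsilon/2$, and finish with Lemma~\ref{lem:subseq}. The chain of accuracies $\varepsilon\rightsquigarrow\varepsilon/2\rightsquigarrow\varepsilon_d\rightsquigarrow\tilde\varepsilon$ is exactly as in the paper, and the specific $t=\frac{(\varepsilon/2)^2}{6(1-\tau)^2d^2}$ is the value needed for Lemma~\ref{lem:switch}.

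The one point where your write-up is imprecise is the definition of $V$, and you have correctly flagged it as the main obstacle---but your proposed resolution is backwards. You suggest $V(u,\varphi):=v_{\tilde g_{u,\varepsilon}(\chi(d\cdot k(\varphi)))}$ with ``$k(\varphi)$ a majorant of $\varphi$'', and then worry about whether this depends on $\varphi$ in a controlled way. The difficulty is that an arbitrary $\varphi:C\to(0,1]$ has no canonical majorant, so $V$ as written is not a well-defined functional. The paper's fix is the opposite of what you suggest: it defines $V$ purely in terms of $\varphi$ (no majorant) via a minimum-search
\[
 J_{\varepsilon,g}(u,\varphi):=\min\bigl\{j:\Vert Tv_{\tilde g_{u,\varepsilon}(j)}-v_{\tilde g_{u,\varepsilon}(j)}\Vert<\varphi(v_{\tilde g_{u,\varepsilon}(j)})\bigr\}
\]
(with fallback $0$ if no such $j$ exists), and sets $V(u,\varphi):=v_{\tilde g_{u,\varepsilon}(J_{\varepsilon,g}(u,\varphi))}$. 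The majorant enters only afterwards, to \emph{bound} $J_{\varepsilon,g}$: Lemma~\ref{lem:sub} gives $J_{\varepsilon,g}(u,\varphi)\le\chi(d\cdot k)$ whenever $k\maj\varphi$, and this is what lets you conclude $j\le\Xi$. With this correction the bookkeeping closes exactly as you outline.
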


\begin{proof}
 For $\varepsilon\in(0,1]$ and $g:\NN\to\NN$ define analogously to \cite{Kohlenbach(11)} a functional $J_{\varepsilon,g}:C\times(C\to(0,1])\to\NN$ by
 \begin{equation*}
  J_{\varepsilon,g}(u,\varphi):=\left\{\begin{array}{ll}
                                        \min\left\{j\in\NN:\Vert T(v_{\tilde g_{u,\varepsilon}(j)})-v_{\tilde g_{u,\varepsilon}(j)}\Vert<\varphi(v_{\tilde g_{u,\varepsilon}(j)})\right\},&\text{if such a $j$ exists,}\\
                                        0,&\text{otherwise,}
                                       \end{array}\right.
 \end{equation*}
 where $\tilde g_{u,\varepsilon}$ is defined as $g_{u,\varepsilon}$ in Lemma \ref{lem:subseq} with $\tilde g$ instead of $g$.
 Observe that by Lemma \ref{lem:sub}, we are always in the first case of the definition of $J_{\varepsilon,g}$ whenever $\varphi:C\to(0,1]$ is majorizable. Moreover, we have
 \begin{equation}\label{eq:j}
  k\maj\varphi\ra\forall u\in C\bigl(J_{\varepsilon,g}(u,\varphi)\le\chi(d\cdot k)\bigr).
 \end{equation}
 Now define $V_{\varepsilon,g}:C\times(C\to(0,1])\to C$ by $V_{\varepsilon,g}(u,\varphi):=v_{\tilde g_{u,\varepsilon}(J_{\varepsilon,g}(u,\varphi))}$, $\varepsilon_d:=\frac{(\varepsilon/2)^4}{8(1-\tau)^2d^2}$ and $t:=\frac{(\varepsilon/2)^2}{6(1-\tau)^2d^2}$. Moreover, if we define $\Delta_{\varepsilon,g}(u,\varphi):=\frac{(\varepsilon/2)^2}{6d(1-\tau)\cdot h(\tilde g_{u,\varepsilon}(J_{\varepsilon,g}(u,\varphi)))}$, then, given a majorant $k\maj\varphi$, \eqref{eq:j} implies $J_{\varepsilon,g}(u,\varphi)\le\chi(d\cdot k)$ for all $u\in C$. Hence
 \begin{equation}\label{eq:fork}
  k\maj\varphi\ra\forall u\in C\left(\frac{(\varepsilon/2)^2}{6d(1-\tau)\cdot h(\tilde g_{u,\varepsilon}(J_{\varepsilon,g}(u,\varphi)))}\ge\left\lceil\frac{{(\varepsilon/2)^2}}{6d(1-\tau)h^M\left(\tilde g^M\left(\chi(d\cdot k)\right)\right)}\right\rceil\right).
 \end{equation}
 Therefore, $f\maj\Delta_{\varepsilon,g}$. We write $\tilde u:=U(\varepsilon_d,t,\Delta_{\varepsilon,g},V_{\varepsilon,g})$ and $\tilde\varphi:=\Phi(\varepsilon_d,t,\Delta_{\varepsilon,g},V_{\varepsilon,g})$ to simplify notation, where $\Phi$ and $U$ are the solution functionals to Problem \ref{prob:proj}. By Theorem \ref{thm:proj}, we then get $k_{i_0}(f)\maj\tilde\varphi$, whence \eqref{eq:j} implies $J_{\varepsilon,g}(\tilde u,\tilde\varphi)\le\Xi(\varepsilon,g,\chi,h,d)$.

 Then, for $j:=J_{\varepsilon,g}(\tilde u,\tilde\varphi)$ and $v:=V_{\varepsilon,g}(\tilde u,\tilde\varphi)$
 \begin{equation}\label{eq:fix}
  \Vert T\tilde u-\tilde u\Vert<\frac{(\varepsilon/2)^2}{6d(1-\tau)\cdot h(\tilde g_{\tilde u,\varepsilon}(j))}
 \end{equation}
  and
 \begin{equation*}
  \Vert Tv-v\Vert<\tilde\varphi(v)\ra\Vert\G\tilde u-\tilde u\Vert^2<\Vert\G\tilde u-(1-t)\tilde u-tv\Vert^2+\varepsilon_d.
 \end{equation*}
  But  $\Vert Tv-v\Vert<\tilde\varphi(v)$ by construction of $J_{\varepsilon,g}$, so 
  \begin{equation}\label{eq:fix2}
    \Vert\G\tilde u-\tilde u\Vert^2<\Vert\G\tilde u-(1-t)\tilde u-tv\Vert^2+\varepsilon_d.
  \end{equation}
  Lemma \ref{lem:switch} then yields $\left\langle\G\tilde u-\tilde u,v-\tilde u\right\rangle<\frac{(\varepsilon/2)^2}{(1-\tau)}$, so Corollary \ref{cor:core} implies 
  \begin{equation}\label{eq:vtildeu}
   \Vert v-\tilde u\Vert\le\varepsilon/2.
  \end{equation}
  From Lemma \ref{lem:subseq}, and the definitions of $v$ and $J_{\varepsilon,g}$, we conclude $\Vert v_{\tilde g(j)}-v_j\Vert\le\varepsilon$. 
\end{proof}

\begin{corollary}\label{cor:main}
 For all $\varepsilon\in(0,1]$ and $g:\NN\to\NN$, there exists an $n\le\Xi(\varepsilon/2,\lambda n.n+g(n),\chi,h,d)$ such that
 \begin{equation*}
  \Vert v_i-v_j\Vert\le\varepsilon,\quad\text{for all }i,j\in[n;n+g(n)].
 \end{equation*}
\end{corollary}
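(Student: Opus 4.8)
The plan is to derive Corollary \ref{cor:main} from Theorem \ref{thm:main} by the standard proof-mining trick that converts a ``one-sided'' metastability statement for the pair $(v_j, v_{\tilde g(j)})$ into a genuine metastability statement for the whole window $[n; n+g(n)]$. The key observation is that in Theorem \ref{thm:main} the function $g$ is essentially arbitrary, so we are free to instantiate it with a cleverly chosen $g^*$ that encodes the full interval we want to control.

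First I would set $g^*(n) := n + g(n)$ (i.e.\ $g^* = \lambda n.\, n + g(n)$, matching the expression in the corollary's statement) and apply Theorem \ref{thm:main} with accuracy $\varepsilon/2$ and this $g^*$. Since $\tilde g^*(n) = \max\{n, g^*(n)\} = n + g(n)$ (as $g(n) \ge 0$), the theorem produces an index $j \le \Xi(\varepsilon/2, \lambda n.n+g(n), \chi, h, d)$ with $\Vert v_j - v_{j + g(j)}\Vert \le \varepsilon/2$. Now set $n := j$. The point is that, because $v_n$ is defined for \emph{every} index as the \emph{unique} fixed point of the contraction $T^{(\lambda_n)}$ (Lemma \ref{lem:contractive}), and because $(\lambda_n)$ is a null sequence, the resolvents $v_n$ and $v_m$ are automatically close for all large $n,m$ — but we cannot invoke that directly without a rate. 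Instead, the cleaner route is: for any $i,j \in [n; n+g(n)]$, use the triangle inequality $\Vert v_i - v_j\Vert \le \Vert v_i - v_n\Vert + \Vert v_n - v_j\Vert$, and reduce each summand to the one we already bounded. Since $i \in [n; n+g(n)]$ means $n \le i \le n + g(n) = \tilde g^*(n)$, and the convergence argument in Theorem \ref{thm:main} (via Lemma \ref{lem:subseq}) is exactly engineered so that $\Vert v_{\tilde g^*(n)} - v_n\Vert \le \varepsilon/2$ controls $\Vert v_i - v_n\Vert$ for \emph{all} intermediate $i$ — this is the role of the $g_{u,\varepsilon}$ construction, which picks out whichever index in the window witnesses the largest deviation.

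Let me restate that last point more carefully, since it is the crux. Lemma \ref{lem:subseq} gives, for the modified function $\tilde g_{u,\varepsilon}$, that $\Vert v_{\tilde g_{u,\varepsilon}(m)} - u\Vert \le \varepsilon/2$ implies $\Vert v_{\tilde g(m)} - v_m\Vert \le \varepsilon$. In the proof of Theorem \ref{thm:main}, $u = \tilde u$ is a common approximate fixed point, and the inequality $\Vert v - \tilde u\Vert \le \varepsilon/2$ in \eqref{eq:vtildeu} is precisely what feeds Lemma \ref{lem:subseq}. To upgrade from ``$v_j$ close to $v_{\tilde g(j)}$'' to ``all pairs in $[n;n+g(n)]$ close'', one applies the same machinery but with $g$ replaced by the function $n \mapsto n + g(n)$: then for any $i \in [n; n+g(n)]$ we may regard $i$ as $\tilde g_0(n)$ for the degenerate choice $g_0(n) := i - n \le g(n)$, and the proof still goes through because the bound $\Vert v - \tilde u\Vert \le \varepsilon/2$ was obtained for the \emph{fixed} point $\tilde u$ independently of which subsequent index we compare against (as long as it lies below $\tilde g^*(n)$). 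Hence $\Vert v_i - v_n\Vert \le \varepsilon/2$ for every $i \in [n; n+g(n)]$, and the triangle inequality finishes the job.

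The main obstacle I anticipate is bookkeeping the quantifier dependencies: one must verify that the index $n = j$ produced by a single application of Theorem \ref{thm:main} with $g^* = \lambda n.n + g(n)$ genuinely controls the \emph{entire} window, rather than just the one endpoint pair. This hinges on the uniformity of the bound $\Vert v - \tilde u\Vert \le \varepsilon/2$ in Theorem \ref{thm:main} — specifically, that $\tilde u$ and the estimate \eqref{eq:vtildeu} do not secretly depend on $\tilde g^*(n)$ in a way that breaks when we shrink the target index. Since $\tilde u$ is constructed from the solution functionals of Problem \ref{prob:proj} and the counterfunctions $V_{\varepsilon,g}, \Delta_{\varepsilon,g}$ (which are defined via $\tilde g_{u,\varepsilon}$ and thus already ``see'' the whole window through the $\max$ in $\tilde g$), this uniformity does hold, but it is the step most worth double-checking. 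Everything else — the halving of $\varepsilon$, the substitution $\tilde g(n) = \max\{n, g(n)\}$, the final triangle inequality — is routine. Thus I would present the proof as: apply Theorem \ref{thm:main} to $(\varepsilon/2, \lambda n.n+g(n))$ to get $n$; observe $\Vert v_i - v_n\Vert \le \varepsilon/2$ for all $i \in [n; n+g(n)]$ by the uniformity just discussed; conclude $\Vert v_i - v_j\Vert \le \varepsilon$ for all $i,j$ in the window.
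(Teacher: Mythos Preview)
Your overall shape---halve $\varepsilon$, replace $g$ by $\lambda n.n+g(n)$, then use the triangle inequality---is the right one, and it is exactly what the paper (following \cite{Kohlenbach(11)}) does. But the step you flag as ``most worth double-checking'' is in fact a genuine gap, and your justification for it does not go through.

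The problem is your uniformity claim. In the proof of Theorem~\ref{thm:main}, the estimate $\Vert v-\tilde u\Vert\le\varepsilon/2$ in \eqref{eq:vtildeu} is obtained \emph{only} for the single index $\tilde g_{\tilde u,\varepsilon}(j)$: both inputs to Corollary~\ref{cor:core}, namely \eqref{eq:fix} and the inequality coming from \eqref{eq:fix2} via Lemma~\ref{lem:switch}, are tied to that specific index. Unwinding Lemma~\ref{lem:subseq} then gives you control over $v_j$ and $v_{\tilde g(j)}$, but nothing about intermediate $v_i$. Your suggestion to ``regard $i$ as $\tilde g_0(n)$ for the degenerate choice $g_0(n):=i-n$'' would require a \emph{separate} application of Theorem~\ref{thm:main} for each $i$; each such application produces its own $\tilde u$ and its own $j$, so there is no single $n$ that works for the whole window. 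The counterfunctions $V_{\varepsilon,g},\Delta_{\varepsilon,g}$ do not ``see the whole window''---$\tilde g_{u,\varepsilon}(m)$ takes exactly one of the two values $m$ or $\tilde g(m)$.

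The correct fix (and the one used in \cite{Kohlenbach(11)}) is to make a \emph{single} clever choice of $g$: given the original $g$, define $\hat g(n)$ to be an index $i\in[n;n+g(n)]$ for which $\Vert v_i-v_n\Vert$ is maximal. Apply Theorem~\ref{thm:main} once with $\varepsilon/2$ and $\hat g$; this produces a $j$ with $\Vert v_j-v_{\hat g(j)}\Vert\le\varepsilon/2$, and by the very definition of $\hat g$ this forces $\Vert v_j-v_i\Vert\le\varepsilon/2$ for \emph{every} $i\in[j;j+g(j)]$. The triangle inequality then finishes as you say. For the bound, note that $\hat g(n)\le n+g(n)$ and that $g$ enters $\Xi$ only through $\tilde g^M$, which is monotone; hence $\Xi(\varepsilon/2,\hat g,\chi,h,d)\le\Xi(\varepsilon/2,\lambda n.n+g(n),\chi,h,d)$, matching the stated rate.
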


\begin{proof}
 Follows as in \cite{Kohlenbach(11)}.
\end{proof}

\begin{lemma}[Modulus of Continuity for the VIP]\label{lem:mod}
 Suppose $u,v,w\in C$ satisfy $\Vert u-v\Vert\le\frac{\varepsilon}{2d(2+\tau)}$ and $\langle\G u-u,w-u\rangle\le\varepsilon/2$. Then $\langle\G v-v,w-v\rangle\le\varepsilon$.
\end{lemma}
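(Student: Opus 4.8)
The plan is to prove Lemma~\ref{lem:mod} by a direct estimate, splitting the inner product $\langle \G v - v, w-v\rangle$ into the term $\langle \G u - u, w-u\rangle$ (which we control by hypothesis) plus error terms that are linear in $\Vert u-v\Vert$ and hence small by the first hypothesis. Writing $\langle \G v - v, w-v\rangle = \langle \G v - v, w-u\rangle + \langle \G v - v, u-v\rangle$, I would first bound the second summand crudely by Cauchy--Schwarz and the diameter bound: $\vert\langle \G v - v, u-v\rangle\vert \le \Vert \G v - v\Vert\cdot\Vert u-v\Vert \le d\,\Vert u-v\Vert$, since both $\G v$ and $v$ lie in $C$.

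For the first summand, the idea is to replace $\G v - v$ by $\G u - u$ at the cost of $\Vert(\G v - v) - (\G u - u)\Vert$. Here I would use that $\G$ is $\tau$-contractive: $\Vert \G v - \G u\Vert \le \tau\Vert v-u\Vert$, so by the triangle inequality $\Vert(\G v - v)-(\G u - u)\Vert \le \Vert \G v - \G u\Vert + \Vert v - u\Vert \le (1+\tau)\Vert u-v\Vert$. Again bounding $\Vert w - u\Vert \le d$ via the diameter, this gives $\langle \G v - v, w-u\rangle \le \langle \G u - u, w-u\rangle + (1+\tau)d\,\Vert u-v\Vert \le \varepsilon/2 + (1+\tau)d\,\Vert u-v\Vert$. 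Finally I would also need to pass from $w-u$ to $w-v$ inside $\langle \G u - u,\cdot\rangle$, but it is cleaner to have organized the split so that the $w-v$ versus $w-u$ discrepancy is already absorbed: writing $\langle\G v - v,w-v\rangle = \langle\G u - u,w-u\rangle + \langle(\G v-v)-(\G u-u),w-u\rangle + \langle\G v-v,u-v\rangle$, the three pieces are bounded by $\varepsilon/2$, $(1+\tau)d\Vert u-v\Vert$, and $d\Vert u-v\Vert$ respectively, for a total of $\varepsilon/2 + (2+\tau)d\Vert u-v\Vert$. By the hypothesis $\Vert u-v\Vert \le \frac{\varepsilon}{2d(2+\tau)}$, the error term is at most $\varepsilon/2$, yielding $\langle\G v-v,w-v\rangle \le \varepsilon$ as claimed.

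There is no real obstacle here; the only thing requiring minor care is the bookkeeping of which vector difference gets paired with which, so that every error factor $\Vert w-u\Vert$ or $\Vert u-v\Vert$ is genuinely bounded by $d = \diam(C)$ (this uses that $u,v,w,\G u,\G v$ all lie in $C$, which holds since $\G:C\to C$). One should double-check the constant: the worst case combines the contraction-induced factor $1+\tau$ with the single extra $\Vert u-v\Vert$ from the $u-v$ slot, giving exactly $(2+\tau)d$, which matches the stated modulus. So the proof is a three-line computation.

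\begin{proof}
 Since $u,v,w\in C$ and $\G$ maps $C$ into $C$, all of $u,v,w,\G u,\G v$ lie in $C$, so any difference of two of them has norm at most $d$. Write
 \begin{align*}
  \langle\G v-v,w-v\rangle&=\langle\G u-u,w-u\rangle+\langle(\G v-v)-(\G u-u),w-u\rangle+\langle\G v-v,u-v\rangle.
 \end{align*}
 The first term is at most $\varepsilon/2$ by hypothesis. For the second, since $\G$ is $\tau$-contractive,
 \begin{equation*}
  \Vert(\G v-v)-(\G u-u)\Vert\le\Vert\G v-\G u\Vert+\Vert v-u\Vert\le(1+\tau)\Vert u-v\Vert,
 \end{equation*}
 so by Cauchy--Schwarz and $\Vert w-u\Vert\le d$ the second term is at most $(1+\tau)d\,\Vert u-v\Vert$. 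The third term is at most $\Vert\G v-v\Vert\cdot\Vert u-v\Vert\le d\,\Vert u-v\Vert$. Adding these,
 \begin{equation*}
  \langle\G v-v,w-v\rangle\le\frac{\varepsilon}{2}+(2+\tau)d\,\Vert u-v\Vert\le\frac{\varepsilon}{2}+(2+\tau)d\cdot\frac{\varepsilon}{2d(2+\tau)}=\varepsilon,
 \end{equation*}
 using $\Vert u-v\Vert\le\frac{\varepsilon}{2d(2+\tau)}$.
\end{proof}
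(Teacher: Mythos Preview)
Your proof is correct and essentially identical to the paper's: both split $\langle\G v-v,w-v\rangle$ into $\langle\G u-u,w-u\rangle$ plus error terms bounded by $(2+\tau)d\Vert u-v\Vert$, with the only cosmetic difference that the paper writes the middle piece as $\langle\G v-\G u,w-u\rangle+\langle u-v,w-u\rangle$ while you combine it into $\langle(\G v-v)-(\G u-u),w-u\rangle$.
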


\begin{proof}
Follows from
 \begin{align*}
  \langle\G v-v,w-v\rangle&=\langle\G v-v,w-u\rangle+\langle\G v-v,u-v\rangle\\
			  &=\langle\G u-u,w-u\rangle+\langle\G v-\G u,w-u\rangle+\langle u-v,w-u\rangle+\langle\G v-v,u-v\rangle\\
			  &\le\frac{\varepsilon}2+d(2+\tau)\Vert u-v\Vert\le\varepsilon.
 \end{align*}
\end{proof}

\begin{theorem}\label{thm:main2}
 In the situation of Theorem \ref{thm:main}, suppose that $\phi_1,\phi_2:(0,\infty)\to\NN$ satisfy
 \begin{enumerate}[label=(\roman*)]
  \item\label{item:div} $\sum_{i=1}^{\phi_1(k)}\lambda_i\ge k\quad$ for all nonnegative integers $k$, and
  \item $\frac{\vert\lambda_n-\lambda_{n+1}\vert}{\lambda_{n+1}^2}\le\varepsilon\quad$ for all $\varepsilon>0$  and $n\ge\phi_2(\varepsilon).$
 \end{enumerate}
 Define the sequence $(u_n)$ by $u_{n+1}:=(1-\lambda_{n+1})Tu_n+\lambda_{n+1}\G Tu_n$ for an arbitrary starting point $u_0\in C$. Then, for all $\varepsilon\in(0,1]$, $g:\NN\to\NN$ and $v\in C$, there exists an $n\le\Xi(\varepsilon/6,g_c,\chi,h,d)+c$, where $g_c(n):=n+c+g(n+c)$ and $c:=\phi_1\left(\frac{(\phi_2((1-\tau)\varepsilon/6d)+\log(6d/\varepsilon))}{1-\tau}\right)$ such that
\begin{equation*}
 \Vert u_i-u_j\Vert\le\varepsilon\quad\text{for all }i,j\in[n;n+g(n)].
\end{equation*}

\end{theorem}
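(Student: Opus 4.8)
The plan is to reduce the metastability of the iteration $(u_n)$ to the already-established metastability of the resolvent $(v_n)$ from Theorem~\ref{thm:main} (and Corollary~\ref{cor:main}), by controlling the gap $\Vert u_n - v_n\Vert$. Since $u_{n+1} = T^{(\lambda_{n+1})}(u_n)$ and, by Lemma~\ref{lem:contractive}, each $T^{(\lambda)}$ is a contraction with constant $1-\lambda(1-\tau)$ whose fixed point is (essentially) $v_n$, I expect that $\Vert u_{n+1}-v_{n+1}\Vert$ can be estimated in terms of $\Vert u_n - v_n\Vert$ contracted by $1-\lambda_{n+1}(1-\tau)$, plus an error term of the form $\mathrm{const}\cdot d\cdot\Vert v_n - v_{n+1}\Vert$ coming from the fact that the target $v_n$ itself moves. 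Unwinding this recursion produces a product $\prod(1-\lambda_i(1-\tau))$ times the initial gap, which is driven to $0$ using the divergence hypothesis~\ref{item:div} with modulus $\phi_1$, together with a tail sum of the displacements $\Vert v_i - v_{i+1}\Vert$ weighted against the contraction factors; this latter sum is handled using hypothesis~(ii) with modulus $\phi_2$, exactly as in the classical Yamada/Wittmann-style argument. The constant $c := \phi_1\bigl(\tfrac{\phi_2((1-\tau)\varepsilon/6d)+\log(6d/\varepsilon)}{1-\tau}\bigr)$ is precisely the index after which $\Vert u_n - v_n\Vert$ is guaranteed to be below $\varepsilon/6$: the $\log(6d/\varepsilon)/(1-\tau)$ piece ensures $\prod(1-\lambda_i(1-\tau)) \le \varepsilon/6d$ kills the initial $d$-bounded gap, and the $\phi_2(\cdots)/(1-\tau)$ piece controls the displacement error.

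Concretely I would proceed as follows. First, establish the one-step inequality $\Vert u_{n+1}-v_{n+1}\Vert \le (1-\lambda_{n+1}(1-\tau))\Vert u_n - v_{n+1}\Vert$ from Lemma~\ref{lem:contractive} (using $v_{n+1}$ as the fixed point of $T^{(\lambda_{n+1})}$), then split $\Vert u_n - v_{n+1}\Vert \le \Vert u_n - v_n\Vert + \Vert v_n - v_{n+1}\Vert$ to get the recursion
\begin{equation*}
 \Vert u_{n+1}-v_{n+1}\Vert \le (1-\lambda_{n+1}(1-\tau))\Vert u_n - v_n\Vert + \Vert v_n - v_{n+1}\Vert.
\end{equation*}
Second, I would need a quantitative bound on $\Vert v_n - v_{n+1}\Vert$ in terms of $\vert\lambda_n - \lambda_{n+1}\vert$; a short direct computation (as in the standard resolvent estimates) gives something like $\Vert v_n - v_{n+1}\Vert \le \mathrm{const}\cdot d\cdot\vert\lambda_n-\lambda_{n+1}\vert/\lambda_{n+1}$ or, after absorbing, a bound amenable to hypothesis~(ii). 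Third, iterate the recursion from some index $m$ onward, bound the resulting sum $\sum_{i} \bigl(\prod_{j>i}(1-\lambda_j(1-\tau))\bigr)\Vert v_i-v_{i+1}\Vert$ using the Abel-summation/telescoping trick that $\prod_{j>i}(1-\lambda_j(1-\tau)) \le \exp(-(1-\tau)\sum_{j>i}\lambda_j)$, and invoke $\phi_1,\phi_2$ to show that for $n \ge c$ one has $\Vert u_n - v_n\Vert \le \varepsilon/6$. Finally, combine with Corollary~\ref{cor:main} applied at accuracy $\varepsilon/6$ and shift function $g_c$: pick $n \le \Xi(\varepsilon/6,g_c,\chi,h,d)$ with $\Vert v_i - v_j\Vert \le \varepsilon/3$ for $i,j \in [n+c; n+c+g(n+c)]$, set $n' := n+c$, and use the triangle inequality $\Vert u_i - u_j\Vert \le \Vert u_i - v_i\Vert + \Vert v_i - v_j\Vert + \Vert v_j - u_j\Vert \le \varepsilon/6 + \varepsilon/3 + \varepsilon/6 < \varepsilon$ on that interval; note $n' \le \Xi(\varepsilon/6,g_c,\chi,h,d) + c$ as claimed.

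The main obstacle is the bookkeeping in the third step: extracting, from the classical asymptotic argument ``$\prod(1-\lambda_i(1-\tau)) \to 0$ and $\sum (\text{displacement terms weighted by tail products}) \to 0$ hence $\Vert u_n - v_n\Vert \to 0$'', a clean closed-form index $c$ that matches the stated expression, and in particular verifying that the two logarithmic/modulus contributions combine additively inside a single application of $\phi_1$. This requires splitting the iterated sum at the point $\phi_2((1-\tau)\varepsilon/6d)$ (before which $\vert\lambda_n-\lambda_{n+1}\vert/\lambda_{n+1}^2$ is not yet small, handled by the crude bound that the contraction factors will eventually shrink it) and after which it is small (handled by a geometric-type estimate against $\sum\lambda_i$), and then choosing the $\phi_1$-argument large enough to dominate both. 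Everything else — Lemma~\ref{lem:contractive}, the resolvent displacement estimate, and the final triangle-inequality splice with Corollary~\ref{cor:main} — is routine given the machinery already developed, and indeed this is the step the paper's phrasing ``follows as in \cite{Kohlenbach(11)}'' suggests is the transplantable core.
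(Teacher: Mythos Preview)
Your approach is correct and essentially the same as the paper's. Two remarks: first, the paper shortcuts your first three steps by directly invoking Equation~(29) of \cite{Yamada(01)}, which already packages the one-step recursion, the resolvent displacement estimate, and the iterated bound into the single inequality $\Vert u_n-v_n\Vert\le\Vert u_p-v_p\Vert\prod_{i=p+1}^n(1-\lambda_i(1-\tau))+\varepsilon/6$ valid for $n\ge p:=\phi_2((1-\tau)\varepsilon/6d)$; your from-scratch derivation recovers exactly this. Second, the ``main obstacle'' you flag---combining the two contributions additively inside a single $\phi_1$---dissolves via the trivial observation $\sum_{i=p+1}^n\lambda_i\ge-p+\sum_{i=1}^n\lambda_i$ (each dropped $\lambda_i\le1$), so $\sum_{i=1}^n\lambda_i(1-\tau)\ge p+\log(6d/\varepsilon)$ suffices and $c$ falls out immediately. (Minor slip: with the stated $c$ the gap bound is $\Vert u_n-v_n\Vert\le\varepsilon/3$, not $\varepsilon/6$; the final triangle inequality is then $\varepsilon/3+\varepsilon/3+\varepsilon/3=\varepsilon$, which still closes.)
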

\begin{proof}
If we define $p:=\phi_2((1-\tau)\varepsilon/6d)$, Equation (29) of \cite{Yamada(01)} implies (see also the remarks preceding Lemma \ref{lem:contractive})
 \begin{equation}\label{eq:simpl}
  \Vert u_n-v_n\Vert\le\Vert u_p-v_p\Vert\prod_{i=p+1}^n(1-\lambda_i(1-\tau))+\varepsilon/6,\text{ for all }n\ge p.
 \end{equation}
 Now, for $n\ge\phi_1\left(\frac{(p+\log(6d/\varepsilon))}{1-\tau}\right)$, we have $\sum_{i=1}^n\lambda_i(1-\tau)\ge p+\log(6d/\varepsilon)$, so
 \begin{equation*}
  \sum_{i=p+1}^n\lambda_i(1-\tau)\ge-p(1-\tau)+\sum_{i=1}^n\lambda_i(1-\tau)\ge\log(6d/\varepsilon).
 \end{equation*}
 Therefore, since $-x>\log(1-x)$ for all $0<x<1$,
 \begin{align*}
    \frac{\varepsilon}{6d}\ge\exp\left(-\sum_{i=p+1}^n\lambda_i(1-\tau)\right)&>\exp\left(\sum_{i=p+1}^n\log(1-\lambda_i(1-\tau))\right)\\&=\exp\left(\log\left(\prod_{i=p+1}^n(1-\lambda_i(1-\tau))\right)\right).
 \end{align*}
Now observe that $\phi_1(k)\ge k$ for all nonnegative integers $k$, so $\phi_1\left(\frac{(p+\log(6d/\varepsilon))}{1-\tau}\right)\ge p$. Going back to \eqref{eq:simpl}, we therefore see that 
\begin{equation}\label{eq:ref1}
 \Vert u_n-v_n\Vert\le\varepsilon/3,\text{ for all }n\ge\phi_1\left(\frac{(\phi_2((1-\tau)\varepsilon/6d)+\log(6d/\varepsilon))}{1-\tau}\right).
\end{equation}
Moreover, by Corollary \ref{cor:main}, there exists an $n\le\Xi(\varepsilon/6,\tilde g,\chi,h,d)$ such that 
\begin{equation*}
 \Vert v_i-v_j\Vert\le\varepsilon/3,\quad\text{for all }i,j\in[n,n+c+g(n+c)].
\end{equation*}
Thus, $\tilde n:=n+c$ satisfies
\begin{equation*}
 \Vert u_i-u_j\Vert\le\Vert u_i-v_i\Vert+\Vert u_j-v_j\Vert+\Vert v_i-v_j\Vert\le\varepsilon,\quad\text{for all }i,j\in[\tilde n,\tilde n+g(\tilde n)].
\end{equation*}

\end{proof}

\begin{theorem}\label{thm:mainquant}
In the situation of Theorem \ref{thm:main2}, the following holds: For all $\varepsilon\in(0,1]$, $g:\NN\to\NN$ and $x\in C$, there exists an $n\le\Xi(\delta/6,g_c,\chi,h,d)+c$, where $\delta=\frac{\varepsilon}{2d(2+\tau)}$, $g_c(n):=n+c+g(n+c)$ and $c:=\phi_1\left(\frac{1}{\tau}(\phi_2(\delta/6)+\log(6d/\delta))\right)$ such that
 \begin{enumerate}[label=(\roman*)]
  \item $\Vert u_i-u_j\Vert\le\varepsilon$ for all $i,j\in[n;n+g(n)]$, and
  \item if $x\in C$ satisfies $\Vert Tx-x\Vert\le\varepsilon'$, then $\langle\G u_n-u_n,x-u_n\rangle\le\varepsilon$,
 \end{enumerate}
 where $\varepsilon':=k_{i_0}'(f)$ and $k_{i_0}'(f)$ is defined as $k_{i_0}(f)$ in Theorem \ref{thm:main}, but with $\delta/6$ instead of $\varepsilon$.
\end{theorem}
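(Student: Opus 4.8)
The plan is to combine the metastability result for the sequence $(u_n)$ already established in Theorem \ref{thm:main2} with the quantitative VIP information extracted from Theorem \ref{thm:proj} via the modulus of continuity in Lemma \ref{lem:mod}. First I would simply invoke Theorem \ref{thm:main2} with the accuracy $\varepsilon$ to obtain the Cauchy-type claim (i): there exists $n\le\Xi(\varepsilon/6,g_c,\chi,h,d)+c$ such that $\Vert u_i-u_j\Vert\le\varepsilon$ for all $i,j\in[n;n+g(n)]$. Since the statement allows the larger bound $\Xi(\delta/6,g_c,\chi,h,d)+c$ with $\delta=\frac{\varepsilon}{2d(2+\tau)}\le\varepsilon$ (so $\delta/6\le\varepsilon/6$ and the functional $\Xi$ is monotone in its first argument, as is $c$ in its constituent moduli), this index also satisfies the hypotheses with room to spare; the point is to pick the \emph{same} $n$ that will also serve for part (ii).

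Next I would go back into the proof of Theorem \ref{thm:main2} — more precisely, into the proof of Theorem \ref{thm:main} that it rests on — and track not just $\Vert v-\tilde u\Vert\le\varepsilon/2$ (Equation \eqref{eq:vtildeu}) but also the second output of the solution to Problem \ref{prob:proj}, namely the approximate variational inequality for $\tilde u=u^*$. Running the argument with $\delta/6$ in place of $\varepsilon$, the solution functionals of Theorem \ref{thm:proj} produce a point $\tilde u$ and a functional $\tilde\varphi$ with $K\maj\tilde\varphi$, i.e. $k_{i_0}'(f)\maj\tilde\varphi$, such that for any $x\in C$ with $\Vert Tx-x\Vert<\tilde\varphi(x)$ one has the $\varepsilon_d$-approximate projection inequality, which via Lemma \ref{lem:switch} and the estimate \eqref{eq:ineqproj}/Corollary \ref{cor:core} converts into $\langle\G\tilde u-\tilde u, x-\tilde u\rangle\le\delta/12$ or so. Since $K\maj\tilde\varphi$ means $1/k_{i_0}'(f)\le\tilde\varphi(x)$ for all $x$, the hypothesis $\Vert Tx-x\Vert\le\varepsilon':=k_{i_0}'(f)$ — here I read $\varepsilon'$ as the reciprocal $1/k_{i_0}'(f)$, matching the majorization convention of Definition \ref{def:maj2} — guarantees $\Vert Tx-x\Vert<\tilde\varphi(x)$, hence $\langle\G\tilde u-\tilde u,x-\tilde u\rangle\le\delta/6$.

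The final step is the transfer from $\tilde u$ to $u_n$. By construction $\tilde u$ is close to some resolvent point $v_j$ with $j\le\Xi(\delta/6,g_c,\chi,h,d)$, and by Equation \eqref{eq:ref1} (applied with $\delta$) the iterate $u_n$ with $n=j+c$ is within $\delta/3$ of $v_n$; combining with \eqref{eq:vtildeu} and the resolvent metastability gives $\Vert u_n-\tilde u\Vert\le\delta/2\le\delta$, hence certainly $\Vert u_n-\tilde u\Vert\le\frac{\delta}{2d(2+\tau)}\cdot 2d(2+\tau)$ — more carefully, one arranges the accuracies so that $\Vert u_n-\tilde u\Vert\le\frac{\varepsilon}{2d(2+\tau)}=\delta$. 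Then Lemma \ref{lem:mod}, with the roles $u:=\tilde u$, $v:=u_n$, $w:=x$, upgrades $\langle\G\tilde u-\tilde u,x-\tilde u\rangle\le\varepsilon/2$ into $\langle\G u_n-u_n,x-u_n\rangle\le\varepsilon$, which is exactly (ii). The one delicate point to get right is the bookkeeping of the various fractions of $\varepsilon$ and of $\delta$ so that all three contributions (the resolvent–iterate gap, the $\tilde u$–$v_j$ gap, and the modulus-of-continuity slack) sum correctly and the index $n$ produced for (ii) is simultaneously an admissible witness for (i); this is why the theorem phrases the bound in terms of $\delta/6$ rather than $\varepsilon/6$, and why $c$ is defined using $\delta$ throughout.

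I expect the main obstacle to be precisely this simultaneous-witness issue: one must verify that the $n$ delivered by the (re-run, $\delta$-scaled) proof of Theorem \ref{thm:main2} is the \emph{same} $n$ for which the VIP estimate holds, rather than merely that two separate indices exist below the bound. Since both (i) and (ii) are obtained by instantiating $J_{\varepsilon,g}$ and $V_{\varepsilon,g}$ at the single pair $(\tilde u,\tilde\varphi)$ coming from Theorem \ref{thm:proj}, this coherence is in fact automatic — but spelling out that the shift by $c$ in \eqref{eq:ref1} is compatible with the shift used to define $g_c$ requires care. Everything else is routine: monotonicity of $\Xi$, $c$, and the auxiliary moduli in their accuracy arguments, and a triangle-inequality estimate tuned against the modulus $\frac{\varepsilon}{2d(2+\tau)}$ from Lemma \ref{lem:mod}.
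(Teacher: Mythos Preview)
There is a genuine gap in your proposal, and it concerns the step where you claim that the solution functionals of Theorem~\ref{thm:proj} produce a point $\tilde u$ and a functional $\tilde\varphi$ such that \emph{for any $x\in C$} with $\Vert Tx-x\Vert<\tilde\varphi(x)$ the approximate projection inequality holds. This is not what Theorem~\ref{thm:proj} (equivalently, the solution to Problem~\ref{prob:proj}) delivers. The second conclusion there is an implication whose premise and conclusion both refer to the \emph{single} point $V(\tilde u,\tilde\varphi)$ produced by the given counterfunction $V$; it is not a universally quantified statement over arbitrary $x\in C$. So from the solution to Problem~\ref{prob:proj} with the counterfunction $V_{\varepsilon,g}$ used in Theorem~\ref{thm:main} you only get information about the specific resolvent point $v=V_{\varepsilon,g}(\tilde u,\tilde\varphi)$, which is precisely what is needed for part~(i) but tells you nothing about the externally given test point $x$.

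The paper closes this gap by \emph{modifying the counterfunction} before invoking Theorem~\ref{thm:proj}: one replaces $V_{\varepsilon,g}$ by
\[
  V'_{\varepsilon,g}(u,\varphi):=\begin{cases}
    V_{\varepsilon,g}(u,\varphi), & \text{if }\Vert\G u-V_{\varepsilon,g}^t(u,\varphi)\Vert\le\Vert\G u-(1-t)u-tx\Vert,\\
    x, & \text{otherwise},
  \end{cases}
\]
so that a single application of Theorem~\ref{thm:proj} simultaneously yields the implication for $V_{\varepsilon,g}(\tilde u',\tilde\varphi')$ (giving metastability as before) \emph{and} the implication for $x$ (giving the approximate VIP). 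Because the majorant $K$ of the solution operator is independent of the counterfunction $V$, the same bound $k_{i_0}'(f)$ still majorizes $\tilde\varphi'$, and that is why the threshold $\varepsilon'$ does not change. Once this modification is in place, the rest of your outline---applying Lemma~\ref{lem:switch} to get $\langle\G\tilde u'-\tilde u',x-\tilde u'\rangle\le(\varepsilon/2)^2/2$, combining \eqref{eq:vtildeu} and \eqref{eq:ref1} to bound $\Vert u_n-\tilde u'\Vert\le\delta$, and transferring via Lemma~\ref{lem:mod}---proceeds essentially as you describe.
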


\begin{proof}
 We first prove that for all $\varepsilon\in(0,1]$, $x\in C$ and $g:\NN\to\NN$, there exists a nonnegative integer $j\le\Xi(\varepsilon,g,\chi,h,d)$ and a $\tilde u'\in C$ such that
 \begin{equation}\label{eq:lilien}
  \Vert v_j-v_{\tilde g(j)}\Vert\le\varepsilon,\text{ and}\quad\Vert Tx-x\Vert\le\varepsilon'\ra\langle\G\tilde u'-\tilde u',x-\tilde u'\rangle\le\frac{(\varepsilon/2)^2}2,
 \end{equation}
 where $\varepsilon':=\alpha^{(i_0\cdot(n_\varepsilon-1))}(\max\{\varphi^*(f),\alpha(1)\})$.

In the proof of Theorem \ref{thm:main}, after equation \eqref{eq:fork}, one can alter the counterfunction $V_{\varepsilon,g}$ to $V_{\varepsilon,g}':C\times(C\to(0,1])\to C$ defined by
 \begin{equation*}
  V_{\varepsilon,g}'(u,\varphi):=\left\{\begin{array}{ll}
                        V_{\varepsilon,g}(u,\varphi),\quad\textmd{if }\Vert\G u-V_{\varepsilon,g}^t(u,\varphi)\Vert\le\left\Vert\G u-(1-t)u-tx\right\Vert,\\
                        x,\quad\textmd{otherwise.}
                       \end{array}\right.
 \end{equation*}
 For $\tilde u':=U(\varepsilon_d,t,\Delta_{\varepsilon,g},V'_{\varepsilon,g})$ and $\tilde\varphi':=\Phi(\varepsilon_d,t,\Delta_{\varepsilon,g},V_{\varepsilon,g}')$ we then get for $j:=J_{\varepsilon,g}(\tilde u',\tilde\varphi')$ and $v:=V'_{\varepsilon,g}(\tilde u',\tilde\varphi')$ as before
 \begin{equation}\label{eq:ana1}
    \Vert T\tilde u'-\tilde u'\Vert<\frac{(\varepsilon/2)^2}{6d\cdot h(\tilde g_{\tilde u,\varepsilon}(j))}
 \end{equation}
   and
 \begin{equation*}
  \Vert Tv-v\Vert<\tilde\varphi(v)\ra\Vert\G\tilde u'-\tilde u'\Vert^2<\Vert\G\tilde u'-(1-t)\tilde u'-tv\Vert^2+\varepsilon_d.
 \end{equation*}
  Then, by construction of $V'_{\varepsilon,g}(u,\varphi)$, we now get two implications; As before,
  \begin{multline}\label{eq:ana2}
   \Vert TV_{\varepsilon,g}(\tilde u',\varphi')-V_{\varepsilon,g}(\tilde u',\varphi')\Vert<\tilde\varphi(V_{\varepsilon,g}(\tilde u',\varphi'))\\\ra\Vert\G\tilde u'-\tilde u'\Vert^2<\Vert\G\tilde u'-(1-t)\tilde u'-tV_{\varepsilon,g}(\tilde u',\varphi')\Vert^2+\varepsilon_d,
  \end{multline}
  and, additionally, 
\begin{equation}\label{eq:new}
 \Vert Tx-x\Vert<\tilde\varphi'(x)
 \ra\Vert\G\tilde u'-\tilde u'\Vert^2<\Vert\G\tilde u'-(1-t)\tilde u'-tx\Vert^2+\varepsilon_d.
\end{equation}
Now, \eqref{eq:ana1} and \eqref{eq:ana2} imply as before $\Vert v_j-v_{\tilde g(j)}\Vert\le\varepsilon$. Moreover,
\begin{equation*}
 \Vert Tx-x\Vert<\tilde\varphi'(x)\ra\langle\G\tilde u'-\tilde u',x-\tilde u'\rangle<\frac{(\varepsilon/2)^2}{2}.
\end{equation*}
 Now observe that the majorant of the solution operator $\Phi$ is independent of the counterfunction $V_{\varepsilon,g}'$; therefore, we may take the same majorant for $\tilde\varphi'$ as we took for $\tilde\varphi$. Therefore
\begin{equation}\label{eq:coinc}
 \Vert Tx-x\Vert<k_{i_0}(f)\ra\langle\G\tilde u'-\tilde u',x-\tilde u'\rangle<\frac{(\varepsilon/2)^2}{2}.
\end{equation}
This completes the proof of \eqref{eq:lilien}.

Thus, we get in Theorems \ref{thm:main} and \ref{thm:main2} also the additional conclusion \eqref{eq:coinc}. Thus, as before in Theorem \ref{thm:main2}, we get an $n\le\Xi(\delta/6,g_c,\chi,h,d)+c$ such that
\begin{equation*}
 \Vert u_i-u_j\Vert\le\delta\le\varepsilon,\quad\text{for all }i,j\in[n;n+g(n)].
\end{equation*}
Moreover, as in the situation of Theorem \ref{thm:main}, we get $\Vert v-\tilde u'\Vert\le\frac{\delta/6}2$ (compare \eqref{eq:vtildeu}). Similarly, we get as in Theorem \ref{thm:main2} that $\Vert u_n-v_n\Vert\le\delta/3$ (compare \eqref{eq:ref1}). Observe also that $\tilde g_{\tilde u,\varepsilon}(n)$ is either $n$ or $\tilde g(n)$, so $v=v_{\tilde g_{\tilde u,\varepsilon}(n)}$ implies
\begin{align}\label{eq:lil}
 \Vert u_n-\tilde u'\Vert&\le\Vert u_n-v_n\Vert+\Vert v_n-v\Vert+\Vert v-\tilde u'\Vert\notag\\
	&\le\frac{\delta}{3}+\Vert v_n-v_{\tilde g(n)}\Vert+\frac{\delta}{12}\le\delta=\frac{\varepsilon}{2d(2+\tau)}.
\end{align}
The claim follows from \eqref{eq:lil} and \eqref{eq:coinc} using Lemma \ref{lem:mod}.
\end{proof}

\begin{corollary}\label{cor:diam}
 For all of the above results, one can drop the condition of $C$ being bounded with $\diam(C)\le d$ in favor of $T$ having a fixed point $v$ such that $\Vert u_0-v\Vert\le d/2$, $\Vert v-\G v\Vert\le\frac{d(1-\tau)}{4}$ and $\Vert v-w\Vert\le\frac{d}{4(1+\tau)}$, where $w$ is the unique fixed point of $\G$.
\end{corollary}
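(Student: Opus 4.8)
The goal is to show that the boundedness hypothesis $\diam(C)\le d$, which was used throughout (via Lemma~\ref{lem:proj}) only to control certain distances, can be replaced by the purely local data that $T$ has a fixed point $v$ with the displayed distance bounds on $\|u_0-v\|$, $\|v-\G v\|$ and $\|v-w\|$ (where $w=\fix\G$). The approach is to locate a bounded, closed, convex, $T$- and $\G$-invariant subset $C'\subseteq H$ that contains the starting point $u_0$, the fixed point $v$, the $\G$-fixed point $w$, and the whole orbit of the iteration $(u_n)$ and of the resolvents $(v_n)$, and whose diameter is at most $d$; then every earlier result applies verbatim with $C$ replaced by $C'$, and since the conclusions are statements about the iterates only, they transfer back to the original setting.

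\medskip

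\textbf{Key steps.}
First I would take $C'$ to be the closed ball $\overline{B}(v,d/2)$ (intersected with $H$, i.e.\ just the ball), which is closed, convex, has diameter $\le d$, and contains $u_0$ by hypothesis. Second, I would check invariance: for $x\in C'$, nonexpansiveness of $T$ and $Tv=v$ give $\|Tx-v\|\le\|x-v\|\le d/2$, so $T(C')\subseteq C'$; for $\G$, since $\G$ is a $\tau$-contraction with fixed point $w$, one has $\|\G x-v\|\le\|\G x-\G v\|+\|\G v-v\|\le\tau\|x-v\|+\|\G v-v\|$, and the hypothesis $\|v-\G v\|\le d(1-\tau)/4\le d(1-\tau)/2$ yields $\|\G x-v\|\le\tau d/2+(1-\tau)d/2=d/2$, so $\G(C')\subseteq C'$; consequently $\G T(C')\subseteq C'$ and each convex combination $x\mapsto(1-\lambda)Tx+\lambda\G Tx$ maps $C'$ into $C'$. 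Third, this invariance shows inductively that $u_n\in C'$ for all $n$ (since $u_0\in C'$), and, because $T^{(\lambda_n)}$ maps the complete set $C'$ into itself and is a strict contraction, its unique fixed point $v_n$ also lies in $C'$; moreover $w\in C'$ since $\|v-w\|\le d/(4(1+\tau))\le d/2$. Fourth, I would invoke Theorems~\ref{thm:main}, \ref{thm:main2} and \ref{thm:mainquant} with the Hilbert space $H$, the bounded convex set $C'$, its diameter bound $d$, and the \emph{same} mappings $T,\G$ restricted to $C'$; all hypotheses are now met, and the conclusions — which only assert the existence of an index $n$ with metastability of $(u_i)$ and the approximate variational inequality at $u_n$ — are exactly the claimed conclusions, with the \emph{same} bounds $\Xi,\Phi$, since those bounds depend on the data only through $d,\tau,\chi,h,g,\varepsilon$ and not on any global structure of $C$.

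\medskip

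\textbf{Main obstacle.}
The only genuinely delicate point is to make sure the auxiliary counterfunction $V$ of Problem~\ref{prob:proj} — in particular the values $V_{\varepsilon,g}(u,\varphi)=v_{\tilde g_{u,\varepsilon}(J_{\varepsilon,g}(u,\varphi))}$ and, in Theorem~\ref{thm:mainquant}, the branch value $x$ — still takes values in $C'$, since Lemma~\ref{lem:proj} and the majorization machinery were set up for a bounded set. For the $v_n$-valued branch this follows from step three above; for the $x$-branch in Theorem~\ref{thm:mainquant} one simply additionally restricts attention to test points $x\in C'$, which is harmless because the conclusion $\langle\G u_n-u_n,x-u_n\rangle\le\varepsilon$ is vacuously weaker for points outside the relevant region, or one notes that $v$ itself already serves as the fixed-point witness. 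The bound $\|v-\G v\|\le d(1-\tau)/4$ (rather than the weaker $d(1-\tau)/2$ needed for bare invariance) and the bound $\|v-w\|\le d/(4(1+\tau))$ leave room to absorb the extra slack that the quantitative Lemmas~\ref{lem:mod} and~\ref{lem:switch} consume, so that the final constants match those stated; verifying this bookkeeping is routine but is where care is required.
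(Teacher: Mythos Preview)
Your approach is correct and is essentially the same idea as the paper's: show that all points actually occurring in the arguments stay in the closed ball $\overline B(v,d/2)$, hence have mutual distances at most $d$, so the diameter hypothesis is never invoked for anything outside that ball. The paper carries this out slightly differently: rather than proving full $T$- and $\G$-invariance of the ball and then rerunning the theorems on $C'=\overline B(v,d/2)$, it verifies directly that the three concrete sequences $(u_n)$, $(v_n)$, $(\G v_n)$ remain in the ball (using Lemma~\ref{lem:contractive} for the first two and the bound $\|\G v_n-v\|\le\tau\|v_n-v\|+(1+\tau)\|v-w\|$ via the $\G$-fixed point $w$ for the third), and then observes that the diameter bound was only ever applied to these sequences and their convex combinations. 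Your invariance formulation is a bit cleaner and, incidentally, your bound $\|\G x-v\|\le\tau\|x-v\|+\|v-\G v\|$ uses only the hypothesis on $\|v-\G v\|$, whereas the paper's route through $w$ is where the hypothesis $\|v-w\|\le d/(4(1+\tau))$ is actually consumed; so in your argument that last hypothesis is superfluous for invariance (it only serves to place $w$ in $C'$, which is not needed). The caveat you flag about the test point $x$ in Theorem~\ref{thm:mainquant} is real for both approaches: restricting to $C'$ yields the conclusion only for $x\in C'$, and the paper's proof glosses over this in the same way.
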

\begin{proof}
 By Lemma \ref{lem:contractive}, we have for all nonnegative integers $n$
 \begin{align*}
  \Vert u_{n+1}-v\Vert&\le\Vert T^{(\lambda_{n+1})}(u_n)-T^{(\lambda_{n+1})}(v)\Vert+\Vert T^{(\lambda_{n+1})}(v)-v\Vert\\
		      &\le(1-\lambda_{n+1}(1-\tau))\Vert u_n-v\Vert+\lambda_{n+1}(1-\tau)\cdot\frac{\Vert \G v-v\Vert}{1-\tau}.
 \end{align*}
 Since $\Vert u_0-v\Vert\le d/2$, we conclude by induction that $\Vert u_n-v\Vert\le d/2$ for all nonnegative integers $n$.
 
 Moreover, observe that Lemma \ref{lem:contractive} implies
 \begin{align*}
  \Vert v_{n}-v\Vert&\le\Vert T^{(\lambda_{n})}(v_n)-T^{(\lambda_{n})}(v)\Vert+\Vert T^{(\lambda_{n})}(v)-v\Vert\\
		&\le(1-\lambda_n(1-\tau))\Vert v_n-v\Vert+\lambda_n(1-\tau)\cdot\frac{\Vert v-\G v\Vert}{1-\tau},
 \end{align*}
 so, since $\lambda_n$ is strictly positive, $\Vert v_n-v\Vert\le\frac{\Vert\G v-v\Vert}{1-\tau}\le d/4\le d/2$. Moreover,
\begin{equation*}
 \Vert \G v_n-v\Vert\le\Vert\G v_n-\G w\Vert+\Vert w-v\Vert\le\tau\Vert v_n-w\Vert+\Vert v-w\Vert\le\tau\Vert v_n-v\Vert+(1+\tau)\Vert v-w\Vert.
\end{equation*}
Therefore, the sequences $(v_n)$, $(\G v_n)$ and $(u_n)$ remain in the ball of radius $d/2$ (and therefore diameter $d$) around $v$. Since the estimate $\diam(C)\le d$ was only ever used for elements of the sequences $(v_n)$, $(\G v_n)$ and $(u_n)$, and convex combinations of those elements, the claim follows.

\end{proof}

\section{Finite Families}

For the rest of this section, let $C$ be a closed and convex subset of $H$. Suppose that $T_1,\ldots,T_N:C\to C$ are nonexpansive mappings with a common fixed point $p\in C$ which satisfy $\bigcap_{i=1}^N\fix(T_i)=\fix(T_N\cdots T_1)$. Then a function $\hat\rho:\NN\times(0,\infty)\to(0,\infty)$ is a modulus for this property if, for all nonnegative integers $d$, all $x\in C$ and all $\varepsilon>0$
\begin{equation}\label{eq:rho}
 \Vert x-p\Vert\le d\text{ and }\Vert T_N\cdots T_1x-x\Vert<\hat\rho(d,\varepsilon)\text{ imply }\Vert T_ix-x\Vert<\varepsilon,\text{ for all }1\le i\le N.
\end{equation}
It is clear that one can, without loss of generality, assume that $\hat\rho$ is monotone in $\varepsilon$ and satisfies $\hat\rho(d,\varepsilon)\le\varepsilon$ for all $\varepsilon>0$ and all $d\in\NN$, which we do from now on.

In \cite{Yamada(01)}, Yamada actually assumes that 
\begin{equation*}
 \bigcap_{i=1}^N\fix(T_i)=\fix(T_N\cdots T_1)=\fix(T_{N-1}\cdots T_1T_N)=\ldots=\fix(T_1\cdots T_N),
\end{equation*}
which is the well-known Bauschke condition \cite{Bauschke(96)}. In \cite{Suzuki(06)}, however, Suzuki showed$^1$\footnote{$^1$ The author is most greatful to Prof. Genaro López Acedo for pointing out this result.} that the Bauschke condition is already implied by the case for e.g. $T_N\cdots T_1$. We now give a quantitative account of this:

\begin{theorem}\label{thm:perm}
 Supppose $C$ is a bounded closed convex subset of a Hilbert space $H$ with diameter $\diam(C)\le d$, and the nonexpansive mappings $T_1,\ldots,T_N$ satisfy \eqref{eq:rho}. Then, if
 \begin{equation*}
  \Vert T_{N-k}\cdots T_1T_N\cdots T_{N-k+1}x-x\Vert<\hat\rho\left(d,\frac{\varepsilon}{2N+1}\right)
 \end{equation*}
 holds for some $k\in\{1,\ldots,N-1\}$, then
 \begin{equation*}
  \Vert T_ix-x\Vert<\varepsilon,\quad\text{for all }i\in\{1,\ldots,N\}.
 \end{equation*}

\end{theorem}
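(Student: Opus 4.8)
The plan is to quantify Suzuki's exact argument, following \cite{Suzuki(06)}. I would decompose the cyclic permutation as $T_{N-k}\cdots T_1T_N\cdots T_{N-k+1}=BA$, where $A:=T_N\cdots T_{N-k+1}$ (a composition of $k$ maps) and $B:=T_{N-k}\cdots T_1$ (a composition of $N-k$ maps), so that the canonical composition is $T_N\cdots T_1=AB$. In the exact situation, if $BAx=x$ then $x':=Ax$ satisfies $ABx'=A(Bx')=Ax=x'$, whence $x'\in\fix(T_N\cdots T_1)=\bigcap_i\fix(T_i)$, and then $x=Bx'=x'$ is itself a common fixed point. I would turn each of these three steps into an inequality with an explicit modulus.

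First I would set $\eta:=\varepsilon/(2N+1)$ and $\delta:=\hat\rho(d,\eta)$, and put $x':=Ax$. Since $p$ is a common fixed point, $Ap=p$, hence $\|x'-p\|=\|Ax-Ap\|\le\|x-p\|\le d$ because $\diam(C)\le d$. As $A$ is nonexpansive and, by hypothesis, $\|Bx'-x\|=\|BAx-x\|<\delta$, I get $\|ABx'-x'\|=\|A(Bx')-Ax\|\le\|Bx'-x\|<\hat\rho(d,\eta)$. Thus $x'$ meets the hypothesis of \eqref{eq:rho} with $\eta$ in place of $\varepsilon$, which yields $\|T_ix'-x'\|<\eta$ for all $i\in\{1,\dots,N\}$.

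Next I would show that $x'$ is an approximate fixed point of the partial composition $B$, using nonexpansiveness of the outermost factor: $\|T_m\cdots T_1x'-x'\|\le\|T_m\cdots T_1x'-T_mx'\|+\|T_mx'-x'\|\le\|T_{m-1}\cdots T_1x'-x'\|+\eta$, so by induction on $m$ one obtains $\|Bx'-x'\|=\|T_{N-k}\cdots T_1x'-x'\|<(N-k)\eta$. Finally I would combine the estimates: for each $i$, $\|T_ix-x\|\le 2\|x-Bx'\|+\|T_iBx'-Bx'\|<2\delta+\|T_iBx'-Bx'\|$, while $\|T_iBx'-Bx'\|\le 2\|Bx'-x'\|+\|T_ix'-x'\|<2(N-k)\eta+\eta$; since $\hat\rho(d,\eta)\le\eta$ by the standing normalization on $\hat\rho$, this gives $\|T_ix-x\|<(2(N-k)+3)\eta\le(2N+1)\eta=\varepsilon$, using $k\ge 1$.

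I do not expect a genuine obstacle here; the only care needed is the arithmetic of constants, so that the errors accumulate linearly (not exponentially) in $N$. That rests entirely on performing the telescoping in the previous paragraph in the right direction — peeling off the outermost map and absorbing the remaining composition via nonexpansiveness — and on exploiting $\hat\rho(d,\varepsilon)\le\varepsilon$ so that the $\delta$-contributions do not inflate the final bound beyond $(2N+1)\eta$.
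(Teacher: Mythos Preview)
Your proposal is correct and follows essentially the same route as the paper: set $x'=Ax$, use nonexpansiveness of $A$ to push the hypothesis to the canonical composition $AB$ at $x'$, invoke \eqref{eq:rho} to get $\Vert T_ix'-x'\Vert<\eta$, telescope through $B$ to bound $\Vert Bx'-x'\Vert$ by $(N-k)\eta$, and combine. The only cosmetic difference is the final bookkeeping: the paper first bounds $\Vert x'-x\Vert\le\Vert Bx'-x\Vert+\Vert Bx'-x'\Vert\le N\eta$ and then routes $\Vert T_ix-x\Vert$ through $x'$, whereas you route through $Bx'$; both give the same $(2N+1)\eta$ bound.
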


\begin{proof}
 Suppose $\Vert T_{N-k}\cdots T_1T_N\cdots T_{N-k+1}x-x\Vert<\hat\rho(d,\placeholder)$, where we write $\placeholder:=\varepsilon/(2N+1)$. Then, since $T_{N}\cdots T_{N-k+1}$ is nonexpansive,
\begin{equation*}
 \Vert T_N\cdots T_1T_N\cdots T_{N-k+1}x-T_N\cdots T_{N-k+1}x\Vert<\hat\rho(d,\placeholder)
\end{equation*}
By hypothesis \eqref{eq:rho}, this implies
\begin{equation}\label{eq:ultimate}
 \Vert T_iT_N\cdots T_{N-k+1}x-T_N\cdots T_{N-k+1}x\Vert<\placeholder,\quad\text{for all }i\in\{1,\ldots,N\}.
\end{equation}
Therefore,
\begin{align*}
 \Vert T_N\cdots T_{N-k+1}x-x\Vert&\le\Vert T_{N-k}\cdots T_1T_N\cdots T_{N-k+1}x-x\Vert\\
 &\quad+\Vert T_{N-k}\cdots T_1T_N\cdots T_{N-k+1}x-T_N\cdots T_{N-k+1}x\Vert\\
 &<\hat\rho(d,\placeholder)+\Vert T_{N-k}\cdots T_2T_N\cdots T_{N-k+1}x-T_N\cdots T_{N-k+1}x\Vert\\
 &\quad+\Vert T_{N-k}\cdots T_2T_1T_N\cdots T_{N-k+1}x-T_{N-k}\cdots T_2T_N\cdots T_{N-k+1}x\Vert\\
 &\le\hat\rho(d,\placeholder)+\Vert T_{N-k}\cdots T_2T_N\cdots T_{N-k+1}x-T_N\cdots T_{N-k+1}x\Vert\\
 &\quad+\Vert T_1T_N\cdots T_{N-k+1}x-T_N\cdots T_{N-k+1}x\Vert\\
 &\le\hat\rho(d,\placeholder)+\placeholder+\Vert T_{N-k}\cdots T_2T_N\cdots T_{N-k+1}x-T_N\cdots T_{N-k+1}x\Vert\\
 &\le\ldots\\
 &\le\hat\rho(d,\placeholder)+(N-k-1)\placeholder+\Vert T_{N-k}T_N\cdots T_{N-k+1}x-T_N\cdots T_{N-k+1}x\Vert\\
 &\le\hat\rho(d,\placeholder)+(N-k)\placeholder\\
 &\le\hat\rho(d,\placeholder)+(N-1)\placeholder\le N\placeholder.
\end{align*}
Using once more \eqref{eq:ultimate}, we then get
\begin{align*}
 \Vert T_ix-x\Vert&\le\Vert T_ix-T_iT_N\cdots T_{N-k+1}x\Vert+\Vert T_iT_N\cdots T_{N-k+1}x-T_N\cdots T_{N-k+1}x\Vert\\
     &\qquad+\Vert T_N\cdots T_{N-k+1}x-x\Vert\\
     &<(2N+1)\placeholder=\varepsilon,\quad\text{for all }i\in\{1,\ldots,N\}.
\end{align*}
\end{proof}

\begin{notation}
 We write $C(N)$ for the set of permutations $\pi:\{1,\ldots, N\}\to\{1,\ldots,N\}$ that are of the form
\begin{equation*}
 \pi(n)=n+k\mod N
\end{equation*}
for some $k\in\NN$.
\end{notation}

Now, if we are given a modulus $\hat\rho$ satisfying \eqref{eq:rho}, we define $\rho:\NN\times(0,\infty)\to\infty$ by $\rho(d,\varepsilon):=\hat\rho(d,\varepsilon/(2N+1))$. In light of the previous theorem, this new modulus $\rho$ will then satisfy for any $\pi\in C(N)$ the implication
\begin{equation}\label{eq:rhoperm}
  \Vert x-p\Vert\le d\text{ and }\Vert T_{\pi(N)}\cdots T_{\pi(1)}x-x\Vert<\rho(d,\varepsilon)\text{ imply }\Vert T_ix-x\Vert<\varepsilon,\text{ for all }1\le i\le N.
\end{equation}

Observe that if all $T_i$ are also strongly quasi-nonexpansive (SQNE) in the sense of Bruck \cite{Bruck(82)}, then one can transform an SQNE-modulus in the sense of Kohlenbach \cite{Kohlenbach(15)} into a function $\rho$ satisfying \eqref{eq:rho}:

\begin{proposition}[see \cite{Kohlenbach(15)}]\label{prop:Koh}
 Let $(X,d)$ be a metric space and $S\subseteq X$ be a subset. Let $T_1,\ldots,T_N$ be SQNE-mappings with SQNE-moduli $\omega_1,\ldots,\omega_N$, respectively, with respect to some common fixed point $p\in S$ of $T_1,\ldots,T_N$ and let $d\in N$. Assume that $T_1,\ldots,T_N$ are uniformly continuous on $S_d:=\{x\in S:d(x,p)\le d\}$ with modulus of continuity $\alpha:(0,\infty)\to(0,\infty)$, i.e.~for all $\varepsilon>0$ and all $y,y'\in S_d$,
 \begin{equation*}
  d(y,y')<\alpha(\varepsilon)\text{ implies }d(T_iy,T_iy')<\varepsilon\text{ for all }1\le i\le N.
 \end{equation*}
 For $\omega(d,\varepsilon):=\min\limits_{1\le i\le N}\omega_i(d,\varepsilon)$, define
 \begin{equation*}
  \chi_d(0,\varepsilon):=\min\{\alpha(\varepsilon/2),\varepsilon\},\quad\chi_d(n+1,\varepsilon):=\min\left\{\omega\left(d,\frac12\chi_d(n,\varepsilon)\right),\frac12\chi_d(n,\varepsilon)\right\}.
 \end{equation*}
 Then $\rho(d,\varepsilon):=\chi_d(N-1,\varepsilon)$ satisfies for all $x\in C$ and all $\varepsilon>0$
 \begin{equation*}
  d(x,p)\le d\text{ and }d(T_NT_{N-1}\cdots T_1 x,x)<\rho(d,\varepsilon)\text{ imply }d(T_ix,x)<\varepsilon\text{ for all }1\le i\le N.
 \end{equation*}

\end{proposition}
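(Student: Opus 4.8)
The plan is to track the orbit of the composition: set $x_0:=x$ and $x_j:=T_jx_{j-1}$ for $1\le j\le N$, so that $x_N=T_N\cdots T_1x$, and to prove by a \emph{backward} induction on $j$ that each partial point stays close to the starting point, $d(x_j,x_0)<\chi_d(j-1,\varepsilon)$. The estimate $d(T_ix,x)<\varepsilon$ then follows from one triangle inequality and the modulus of continuity $\alpha$. Throughout, abbreviate $c_n:=\chi_d(n,\varepsilon)$; the recursion gives $c_{n+1}\le c_n/2$ and $c_{n+1}\le\omega(d,c_n/2)$, hence $(c_n)$ is nonincreasing, and $\rho(d,\varepsilon)=c_{N-1}$. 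Since SQNE-mappings are quasi-nonexpansive at $p$, the numbers $d(x_j,p)$ decrease with $j$ and stay $\le d$, so all $x_j$ remain in $S_d$ — this is what licenses both the SQNE-moduli $\omega_i$ and the continuity modulus $\alpha$ at the relevant points.

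For the backward induction, the base case $j=N$ is precisely the hypothesis $d(x_N,x_0)<\rho(d,\varepsilon)=c_{N-1}$. For the step, assume $d(x_j,x_0)<c_{j-1}$ for some $j\ge 2$. Telescoping the nonnegative quantities $d(x_{l-1},p)-d(x_l,p)$ over $1\le l\le j$ gives $d(x_0,p)-d(x_j,p)=\sum_{l=1}^{j}\big(d(x_{l-1},p)-d(x_l,p)\big)\le d(x_0,x_j)<c_{j-1}$, so the single term $d(x_{j-1},p)-d(x_j,p)$, being one of $j$ nonnegative summands of this total, is $<c_{j-1}\le\omega(d,c_{j-2}/2)\le\omega_j(d,c_{j-2}/2)$. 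The defining property of the SQNE-modulus of $T_j$ (contrapositively: for $y\in S_d$, $d(y,p)-d(T_jy,p)<\omega_j(d,\delta)$ forces $d(y,T_jy)<\delta$) then gives $d(x_{j-1},x_j)<c_{j-2}/2$, whence $d(x_{j-1},x_0)\le d(x_{j-1},x_j)+d(x_j,x_0)<c_{j-2}/2+c_{j-1}\le c_{j-2}/2+c_{j-2}/2=c_{j-2}$, which closes the induction.

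It remains to deduce $d(T_ix,x)<\varepsilon$ for each $i$. For $i=1$ this is immediate: $d(T_1x,x)=d(x_1,x_0)<c_0=\min\{\alpha(\varepsilon/2),\varepsilon\}\le\varepsilon$. For $i\ge 2$, write $d(T_ix,x)\le d(T_ix_0,T_ix_{i-1})+d(x_i,x_0)$, using $T_ix_{i-1}=x_i$. The second term is $<c_{i-1}\le c_1\le c_0/2\le\varepsilon/2$ by the invariant and the monotonicity of $(c_n)$; for the first, $d(x_0,x_{i-1})<c_{i-2}\le c_0\le\alpha(\varepsilon/2)$, so the uniform continuity of $T_i$ on $S_d$ yields $d(T_ix_0,T_ix_{i-1})<\varepsilon/2$. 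Adding the two gives $d(T_ix,x)<\varepsilon$.

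The telescoping of the quasi-nonexpansive gaps and the final two-term triangle estimate are routine; the step that requires care is the bookkeeping in the backward induction. One has to see that the two clauses of the recursion defining $\chi_d$ — the halving $c_{n+1}\le c_n/2$ and the SQNE clause $c_{n+1}\le\omega(d,c_n/2)$ — are exactly what is needed so that the error $d(x_{j-1},x_j)$ produced when peeling off $T_j$ is at most $c_{j-2}/2$ and therefore the accumulated error $d(x_{j-1},x_0)$ does not outgrow the tolerance $c_{j-2}$ available at the next peeling step, and that the base value $c_0=\min\{\alpha(\varepsilon/2),\varepsilon\}$ is small enough to split the final estimate as $\varepsilon/2+\varepsilon/2$.
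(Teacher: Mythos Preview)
The paper does not supply a proof of this proposition; it is quoted from \cite{Kohlenbach(15)} and used as a black box. Your argument is correct and is essentially the intended one: set $x_j:=T_jx_{j-1}$, note that quasi-nonexpansivity keeps all $x_j$ in $S_d$, use the reverse triangle inequality to bound the telescoping sum $\sum_{l=1}^{j}\bigl(d(x_{l-1},p)-d(x_l,p)\bigr)=d(x_0,p)-d(x_j,p)\le d(x_0,x_j)$, deduce that the single gap $d(x_{j-1},p)-d(x_j,p)$ is below $\omega_j(d,c_{j-2}/2)$, apply the SQNE-modulus contrapositively to get $d(x_{j-1},x_j)<c_{j-2}/2$, and close the backward induction by the triangle inequality. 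The final split $\varepsilon/2+\varepsilon/2$ via the continuity modulus at level $c_0$ is exactly how the base constant $\chi_d(0,\varepsilon)=\min\{\alpha(\varepsilon/2),\varepsilon\}$ is meant to be spent. One cosmetic remark: the phrase ``being one of $j$ nonnegative summands'' slightly obscures the point --- the number of summands is irrelevant; what matters is simply that each nonnegative summand is bounded by the total.
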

Observe that, if the $T_i$ are SQNE and nonexpansive, then the identity on $(0,\infty)$ is a modulus of continuity $\alpha$ in the sense of the proposition above.
 
 Consider now the following iteration scheme (see e.g.~\cite{Yamada(01)})
 \begin{equation}\label{eq:algoMult}
  u_{n+1}:=T^{(\lambda_{n+1})}_{[n+1]}(u_n):=(1-\lambda_{n+1})T_{[n+1]}(u_n)+\lambda_{n+1}\G T_{[n+1]}(u_n),
 \end{equation}
 where $(\lambda_n)\subset(0,1]$ and $[n]:=n\mod N$.

 \begin{lemma}\label{lem:verylast}
  Suppose the closed convex set $C\subseteq H $ of a Hilbert space $H$ is bounded with $\diam(C)\le d$ for all nonnegative integers $n$ and $\chi:\NN\to\NN$ is a rate of convergence for $(\lambda_n)$ to $0$, i.e.~$\lambda_n\le1/k$ for all nonnegative integers $n\ge\chi(k)$. Then 
  \begin{equation*}
   \Vert u_{n+1}-T_{[n+1]}(u_n)\Vert\le\frac1k,\quad\text{for all nonnegative integers }n\ge\chi\left(d\cdot k\right).
  \end{equation*}
 \end{lemma}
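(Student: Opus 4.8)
The plan is to read the bound off directly from the convex-combination shape of the scheme \eqref{eq:algoMult}; no proof-theoretic machinery is needed here. First I would rearrange the iteration: from $u_{n+1}=(1-\lambda_{n+1})T_{[n+1]}(u_n)+\lambda_{n+1}\G T_{[n+1]}(u_n)$, collecting the $T_{[n+1]}(u_n)$-terms yields
\begin{equation*}
 u_{n+1}-T_{[n+1]}(u_n)=\lambda_{n+1}\bigl(\G T_{[n+1]}(u_n)-T_{[n+1]}(u_n)\bigr).
\end{equation*}

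Next I would estimate the factor on the right. Since $T_{[n+1]}$ and $\G$ are self-maps of $C$, both $T_{[n+1]}(u_n)$ and $\G T_{[n+1]}(u_n)$ lie in $C$, so $\Vert \G T_{[n+1]}(u_n)-T_{[n+1]}(u_n)\Vert\le\diam(C)\le d$, and hence $\Vert u_{n+1}-T_{[n+1]}(u_n)\Vert\le\lambda_{n+1}d$. Now I would invoke the rate of convergence: if $n\ge\chi(d\cdot k)$ then also $n+1\ge\chi(d\cdot k)$, so by the defining property of $\chi$ we get $\lambda_{n+1}\le 1/(d\cdot k)$, whence
\begin{equation*}
 \Vert u_{n+1}-T_{[n+1]}(u_n)\Vert\le\lambda_{n+1}d\le\frac{d}{d\cdot k}=\frac1k,
\end{equation*}
which is exactly the claim.

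There is no genuine obstacle: the only mild point of care is the index shift between $n$ and $n+1$, which causes no trouble because $\chi(d\cdot k)\le n$ trivially implies $\chi(d\cdot k)\le n+1$. The boundedness assumption on $C$ is what carries the argument; contractivity of $\G$ is not even used. Structurally, this lemma plays for the finite-family scheme the same role that the elementary bound $\Vert z_t-Tz_t\Vert\le td$ played in the single-map case, namely controlling the defect $\Vert u_{n+1}-T_{[n+1]}(u_n)\Vert$ uniformly along the tail.
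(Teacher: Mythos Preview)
Your argument is correct and is exactly the immediate verification the paper has in mind: rewrite \eqref{eq:algoMult} as $u_{n+1}-T_{[n+1]}(u_n)=\lambda_{n+1}(\G T_{[n+1]}(u_n)-T_{[n+1]}(u_n))$, bound the bracket by $d$, and use $\lambda_{n+1}\le 1/(dk)$ for $n\ge\chi(dk)$. There is nothing to add.
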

 
 \begin{proof}
  Follows immediately from \eqref{eq:algoMult}.
 \end{proof}

 \begin{theorem}\label{thm:asy}
  Suppose $C$ is bounded with $\diam C\le d$ and $\chi$ is as before. Given moduli $\phi_3:(0,\infty)\times\NN\to\NN$ and $\phi_4:(0,\infty)\to\NN$ such that
  \begin{enumerate}
   \item $\phi_3(\varepsilon,n)\ge n$ for all $\varepsilon>0$ and all $n\in\NN$,
   \item $\prod_{i=n}^{m}(1-\lambda_i(1-\tau))\le\varepsilon\quad$ for all nonnegative integers $n,m$ with $m\ge\phi_3(\varepsilon,n)$, and
   \item $\sum_{i=\phi_4(k)}^\infty\vert\lambda_{i+N}-\lambda_i\vert\le\varepsilon$ for all $\varepsilon>0$.
  \end{enumerate}
  Then, for all $\varepsilon>0$ and all $n\ge\hat\chi(\varepsilon):=\max\{\phi_3(\varepsilon/2d,\phi_4(\varepsilon/4d)),\chi(\lceil Nd/2\varepsilon\rceil)\}$,
  \begin{equation*}
   \Vert u_{n}-T_{[n+N]}\cdots T_{[n+1]}(u_n)\Vert\le\varepsilon.
  \end{equation*}
 \end{theorem}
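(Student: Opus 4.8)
The plan is to bound $\Vert u_n - T_{[n+N]}\cdots T_{[n+1]}(u_n)\Vert$ by comparing the running iterate $u_n$ with a "frozen" iterate that applies $N$ steps of the map $T^{(\lambda)}_{[\cdot]}$ at the \emph{fixed} parameter $\lambda = \lambda_{n+1}$ (or some reference value), and then estimating how far that $N$-fold composition moves $u_n$. The natural decomposition is
\begin{equation*}
 \Vert u_n - T_{[n+N]}\cdots T_{[n+1]}(u_n)\Vert
 \le \Vert u_n - u_{n+N}\Vert + \Vert u_{n+N} - T_{[n+N]}\cdots T_{[n+1]}(u_n)\Vert,
\end{equation*}
where the second term measures the discrepancy between genuinely iterating the HSDM $N$ times (with the varying parameters $\lambda_{n+1},\dots,\lambda_{n+N}$) and applying the $N$-fold composition of the $T_{[i]}$ alone to $u_n$. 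First I would expand $u_{n+N}$ via the recursion \eqref{eq:algoMult}: each step writes $u_{k+1} = T_{[k+1]}(u_k) - \lambda_{k+1}\mu\F(T_{[k+1]}(u_k))$, so the difference $\Vert u_{k+1} - T_{[k+1]}(u_k)\Vert = \lambda_{k+1}\Vert T_{[k+1]}(u_k) - \G T_{[k+1]}(u_k)\Vert \le \lambda_{k+1} d$ by Lemma \ref{lem:verylast} (or directly). Telescoping through the $N$ steps and using nonexpansiveness of the remaining $T_{[j]}$ to propagate these perturbations, the second term is bounded by $d\sum_{i=n+1}^{n+N}\lambda_i$, which is $\le Nd\lambda_{n+1}$-ish and becomes $\le \varepsilon/2$ once $n \ge \chi(\lceil Nd/2\varepsilon\rceil)$ — this explains the $\chi(\lceil Nd/2\varepsilon\rceil)$ term in $\hat\chi$.

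For the first term, $\Vert u_n - u_{n+N}\Vert$, I would exploit the asymptotic-regularity estimate driven by hypotheses (2) and (3). The key observation is that applying the full $N$-cycle $T^{(\lambda_{n+N})}_{[n+N]}\circ\cdots\circ T^{(\lambda_{n+1})}_{[n+1]}$ is a composition of strict contractions (Lemma \ref{lem:contractive}), with overall Lipschitz constant $\prod_{i=n+1}^{n+N}(1-\lambda_i(1-\tau))$, and the fixed-point sets line up under the Suzuki/Bauschke-type condition \eqref{eq:rhoperm}. One compares $u_{n+N}$ with the image of $u_n$ under the $N$-cycle that uses the \emph{shifted} parameters $\lambda_{n+N+1},\dots,\lambda_{n+2N}$; the difference between these two $N$-cycle operators at any point of $C$ is controlled by $\sum_{i}\vert\lambda_{i+N}-\lambda_i\vert\cdot(\text{stuff of size }d)$, which by hypothesis (3) is small past $\phi_4(\varepsilon/4d)$. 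A standard "Cauchy via contraction plus summable perturbation" argument — the same pattern used in Theorem \ref{thm:main2} with \eqref{eq:simpl} — then shows that the orbit under the $N$-cycle operators is contracting fast (hypothesis (2), via $\phi_3$) modulo a tail that hypothesis (3) makes summable, so $\Vert u_n - u_{n+N}\Vert \le \varepsilon/2$ once $n \ge \phi_3(\varepsilon/2d, \phi_4(\varepsilon/4d))$. Adding the two halves gives the claim for $n \ge \hat\chi(\varepsilon)$.

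I expect the main obstacle to be the bookkeeping in the first term: one must track how the perturbation $\vert\lambda_{i+N}-\lambda_i\vert$ at each of the $N$ slots of a cycle propagates through subsequent (nonexpansive) factors and then accumulates over all cycles between index $n$ and $n+N$, while simultaneously the contraction factor $\prod(1-\lambda_i(1-\tau))$ is shrinking the earlier contributions. Getting the constants to land exactly on $\varepsilon/2d$ and $\varepsilon/4d$ (so the factor of $d$ from $\diam(C)\le d$ and the factor of $N$ are absorbed correctly) requires care, but it is the routine "geometric series dominated by a contraction" estimate and follows the template already laid down in \eqref{eq:simpl}. The genuinely non-routine input — that $\fix(T_{[n+N]}\cdots T_{[n+1]})$ is independent of the starting offset, so that the various $N$-cycle operators share a common fixed point $p$ and one can measure everything relative to $\Vert\cdot - p\Vert \le d$ — is exactly what \eqref{eq:rhoperm} (derived from Theorem \ref{thm:perm}) supplies, so no new conceptual work is needed there.
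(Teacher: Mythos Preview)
Your approach is essentially the same as the paper's: the same triangle-inequality split into $\Vert u_n-u_{n+N}\Vert$ and $\Vert u_{n+N}-T_{[n+N]}\cdots T_{[n+1]}(u_n)\Vert$, the same telescoping-plus-nonexpansiveness argument (via Lemma~\ref{lem:verylast}) for the second term, and the same ``contraction with summable perturbation'' recursion for the first term (the paper simply quotes this as Inequality~(37) of \cite{Yamada(01)} rather than rederiving it).

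One minor over-complication: you invoke \eqref{eq:rhoperm} to ensure the various $N$-cycle operators share a common fixed point so that $\Vert\cdot-p\Vert\le d$. The paper's proof does not use this at all for Theorem~\ref{thm:asy}; the bound $\Vert u_{m+N}-u_m\Vert\le d$ that seeds the recursion comes directly from $\diam(C)\le d$, and the contraction factors $(1-\lambda_i(1-\tau))$ come from Lemma~\ref{lem:contractive} applied step by step, with no reference to fixed points of the composite. The Bauschke-type input \eqref{eq:rhoperm} is only needed afterwards, in Corollary~\ref{thm:asy2}, to pass from $\Vert u_n-T_{[n+N]}\cdots T_{[n+1]}(u_n)\Vert\le\varepsilon$ to $\Vert u_n-T_{\pi(N)}\cdots T_{\pi(1)}(u_n)\Vert\le\varepsilon$ for arbitrary cyclic $\pi$.
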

 
 Using Theorem \ref{thm:perm}, this theorem immediately implies the asymptotic regularity of $(u_n)$ with respect to the mapping $T_{\pi(N)}\cdots T_{\pi(1)}$ for each $\pi\in C(N)$:
 \begin{corollary}\label{thm:asy2}
  In the situation of Theorem \ref{thm:asy}, for all $\pi\in C(n)$, all $\varepsilon>0$ and all $n\ge\hat\chi(\rho(d,\varepsilon/N))$
  \begin{equation*}
   \Vert u_n-T_{\pi(N)}\ldots T_{\pi(1)}u_n\Vert\le\varepsilon.
  \end{equation*}

 \end{corollary}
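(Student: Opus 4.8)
The plan is to derive the corollary from two ingredients already available: Theorem~\ref{thm:asy}, which gives asymptotic regularity of $(u_n)$ with respect to the \emph{particular} cyclic composition $T_{[n+N]}\cdots T_{[n+1]}$ occurring at step $n$, and Theorem~\ref{thm:perm} (in the packaged form of the modulus $\rho$ and the implication \eqref{eq:rhoperm}), which is exactly what is needed to transfer such control from one cyclic ordering to all of the $T_i$ simultaneously. A short telescoping estimate then reassembles any prescribed cyclic composition.

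First I would observe that for each fixed $n$ the block $T_{[n+N]}\cdots T_{[n+1]}$ equals $T_{\sigma(N)}\cdots T_{\sigma(1)}$ for the cyclic permutation $\sigma(j):=[n+j]$, which lies in $C(N)$. Applying Theorem~\ref{thm:asy} with $\rho(d,\varepsilon/N)$ in place of $\varepsilon$, every $n\ge\hat\chi(\rho(d,\varepsilon/N))$ satisfies $\Vert u_n-T_{[n+N]}\cdots T_{[n+1]}u_n\Vert\le\rho(d,\varepsilon/N)$. Since $u_n\in C$ and the common fixed point $p\in C$ obeys $\Vert u_n-p\Vert\le\diam(C)\le d$, the implication \eqref{eq:rhoperm} — valid for every permutation in $C(N)$ by virtue of the definition $\rho(d,\cdot):=\hat\rho(d,\cdot/(2N+1))$ together with Theorem~\ref{thm:perm} — applies with $\sigma$ and with $\varepsilon/N$, yielding $\Vert T_iu_n-u_n\Vert<\varepsilon/N$ for all $1\le i\le N$. (The minor discrepancy between $<$ and $\le$ at the two places it arises is absorbed into the usual slack of the moduli and I suppress it.)

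It remains to pass from ``every $T_i$ displaces $u_n$ by less than $\varepsilon/N$'' to the bound for the prescribed $\pi\in C(N)$. Writing $S_j:=T_{\pi(j)}$ and using nonexpansiveness of $S_{m+1}$, one has
\[
 \Vert S_{m+1}S_m\cdots S_1u_n-u_n\Vert\le\Vert S_{m+1}S_m\cdots S_1u_n-S_{m+1}u_n\Vert+\Vert S_{m+1}u_n-u_n\Vert\le\Vert S_m\cdots S_1u_n-u_n\Vert+\tfrac{\varepsilon}{N},
\]
so an induction on $m$ gives $\Vert S_m\cdots S_1u_n-u_n\Vert<m\varepsilon/N$ for $0\le m\le N$; taking $m=N$ yields $\Vert T_{\pi(N)}\cdots T_{\pi(1)}u_n-u_n\Vert<\varepsilon$, which is the claim.

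I do not expect a genuine obstacle; the argument is soft. The one point that must be handled with care is the bookkeeping of constants: the factor $N$ sacrificed in passing to $\rho(d,\varepsilon/N)$ has to be exactly recovered by the factor $N$ accumulated in the telescoping step, and one must check that the threshold remains $\hat\chi(\rho(d,\varepsilon/N))$ — with $\hat\chi$ monotone and evaluated precisely at $\rho(d,\varepsilon/N)$, not at a smaller argument — so that the conclusion ``for all $n\ge\hat\chi(\rho(d,\varepsilon/N))$'' is not weakened.
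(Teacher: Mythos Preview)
Your argument is correct and is precisely the route the paper has in mind: the paper's own ``proof'' of the corollary is the single sentence preceding it (``Using Theorem~\ref{thm:perm}, this theorem immediately implies\ldots''), and you have simply spelled out that immediate implication --- apply Theorem~\ref{thm:asy} at accuracy $\rho(d,\varepsilon/N)$, invoke \eqref{eq:rhoperm} to control each $\Vert T_iu_n-u_n\Vert$ by $\varepsilon/N$, and telescope using nonexpansiveness. The only superfluous remark is the appeal to monotonicity of $\hat\chi$ in your final paragraph: you never actually use it, since Theorem~\ref{thm:asy} is applied directly with $\rho(d,\varepsilon/N)$ in the role of $\varepsilon$, which already yields the threshold $\hat\chi(\rho(d,\varepsilon/N))$ on the nose.
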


 \begin{proof}
  Inequality (37) of \cite{Yamada(01)} reads (see also the remarks preceding Lemma \ref{lem:contractive})
  \begin{multline*}
   \Vert u_{n+N}-u_n\Vert\le d\sum_{k=m+1}^n\vert\lambda_{k+N}-\lambda_k\vert+\Vert u_{m+N}-u_m\Vert\prod_{k=m+1}^n(1-\lambda_{k+N}(1-\tau)),\\\text{for all }n>m\ge0.
  \end{multline*}
  Therefore, for $m=\phi_4(\varepsilon/2d)-1$, we get for all $n\ge\phi_4(\varepsilon/2d)-1$
  \begin{align*}
   \Vert u_{n+N}-u_n\Vert&\le d\sum_{k=\phi_4(\varepsilon/2d)}^\infty\vert\lambda_{k+N}-\lambda_k\vert+d\prod_{k=\phi_4(\varepsilon/2d)}^n(1-\lambda_{k+N}(1-\tau))\\
   &\le\frac{\varepsilon}{2}+d\prod_{k=\phi_4(\varepsilon/2d)}^n(1-\lambda_{k+N}(1-\tau)).
  \end{align*}
  Therefore, $\Vert u_{n+N}-u_n\Vert\le\varepsilon$ for all $n\ge\phi_3(\varepsilon/2d,\phi_4(\varepsilon/2d))$.
  
  Now observe that
  \begin{align*}
   u_{n+N}-T_{[n+N]}\cdots T_{[n+1]}(u_n)&=\sum_{k=1}^{N-1}\Big(T_{[n+N]}\cdots T_{[n+N-k+1]}(u_{n+N-k})\\
   &\qquad\qquad\qquad-T_{[n+N]}\cdots T_{[n+N-k]}(u_{n+N-k-1})\Big)\\
   &\quad+u_{n+N}-T_{[n+N]}(u_{n+N-1}).
  \end{align*}
  Therefore, since each $T_i$ is nonexpansive
  \begin{equation*}
   \Vert u_{n+N}-T_{[n+N]}\cdots T_{[n+1]}(u_n)\Vert\le\sum_{k=0}^{N-1}\Vert u_{n+N-k}-T_{[n+N-k]}(u_{n+N-k-1})\Vert.
  \end{equation*}
  Consequently, using Lemma \ref{lem:verylast}, for all $n\ge\max\{\phi_3(\varepsilon/2d,\phi_4(\varepsilon/4d)),\chi(\lceil Nd/2\varepsilon\rceil)\}$,
  \begin{equation*}
   \Vert u_n-T_{[n+N]}\cdots T_{[n+1]}(u_n)\Vert\le\Vert u_n-u_{n+N}\Vert+\Vert u_{n+N}-T_{[n+N]}\cdots T_{[n+1]}(u_n)\Vert\le\varepsilon.
  \end{equation*}

 \end{proof}

We will need the following fact:

\begin{lemma}[see e.g.~Fact 2.13(a) of \cite{Yamada(01)}]\label{lem:last}
 For any real sequence $(\lambda_n)\subset[0,1]$ and nonnegative integers $n$ and $m$ such that $n\ge m$,
 \begin{equation*}
  \sum_{i=m}^n\left(\lambda_i\prod_{j=i+1}^n(1-\lambda_j)\right)\le1.
 \end{equation*}

\end{lemma}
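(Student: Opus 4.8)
The plan is to prove the inequality by a telescoping argument, which in fact yields the exact value of the sum rather than just the bound. First I would introduce the shorthand $P_i:=\prod_{j=i}^n(1-\lambda_j)$ for $m\le i\le n+1$, adopting the usual convention that the empty product $P_{n+1}$ equals $1$. With this notation one has the one-step recurrence $P_i=(1-\lambda_i)P_{i+1}$, and hence, writing $\lambda_i=1-(1-\lambda_i)$,
\[
 \lambda_i\prod_{j=i+1}^n(1-\lambda_j)=\lambda_i P_{i+1}=P_{i+1}-(1-\lambda_i)P_{i+1}=P_{i+1}-P_i.
\]

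The second step is simply to sum this identity over $i$ from $m$ to $n$: the right-hand side telescopes, leaving
\[
 \sum_{i=m}^n\left(\lambda_i\prod_{j=i+1}^n(1-\lambda_j)\right)=P_{n+1}-P_m=1-\prod_{j=m}^n(1-\lambda_j).
\]

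Finally, since every $\lambda_j$ lies in $[0,1]$, each factor $1-\lambda_j$ lies in $[0,1]$, so the product $\prod_{j=m}^n(1-\lambda_j)$ is nonnegative; subtracting a nonnegative quantity from $1$ gives a value $\le 1$ (and, incidentally, $\ge 0$ as well), which is exactly the claim. An alternative route would be induction on $n-m$: split off the last summand $\lambda_n$ and factor $(1-\lambda_n)$ out of the remaining terms, reducing to the case with upper index $n-1$; but the telescoping derivation above is cleaner and delivers the sharp identity for free. There is no genuine obstacle here — the only point requiring a moment's care is the empty-product convention $P_{n+1}=1$, which is precisely what makes the $i=n$ term $\lambda_n$ fit into the telescoped form.
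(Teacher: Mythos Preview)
Your telescoping argument is correct and in fact yields the exact identity $\sum_{i=m}^n\lambda_i\prod_{j=i+1}^n(1-\lambda_j)=1-\prod_{j=m}^n(1-\lambda_j)$, from which the inequality follows immediately since each factor lies in $[0,1]$. The paper itself does not supply a proof of this lemma at all: it simply cites the result as Fact~2.13(a) of \cite{Yamada(01)}, so there is nothing to compare against beyond noting that your derivation is the standard one.
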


\begin{lemma}\label{lem:core}
 Suppose as before that $C$ is bounded with $\diam C\le d$, where $d\in\NN$. Suppose moreover that $\lambda_n\in[0,1]$, $u^*\in C$, $\phi_3:\NN\to\NN$ and $\chi:\NN\to\NN$ satisfy 
 \begin{enumerate}
  \item $\lambda_n\le1/k$ for all nonnegative integers $n\ge\chi(k)$, and
  \item  $\prod_{i=n}^m(1-\lambda_i(1-\tau))\le\varepsilon$ for all $\varepsilon>0$ and all nonnegative integers $n,m$ with $m\ge\phi_3(\varepsilon,n)$.
 \end{enumerate}
 Then, for all $\varepsilon>0$, $n_0\in\NN$ and all $g:\NN\to\NN$,
 \begin{enumerate}[label=(\roman*)]
  \item\label{item:corefam1} $\displaystyle n_0\ge\chi\left(\left\lceil\frac{12d^2}{\varepsilon^2(1-\tau)}\right\rceil\right)$,
  \item\label{item:corefam2} $\displaystyle\left\langle T_{[n+1]}(u_n)-u^*,\G u^*-u^*\right\rangle\le\frac{\varepsilon^2(1-\tau)}{6}$ for all $n\in[n_0;\tilde g(\phi_3(\varepsilon^2/3d^2),n_0)-1]$, and
  \item\label{item:corefam3} $\displaystyle\Vert T_{[n+1]}(u^*)-u^*\Vert\le\Omega_d(\varepsilon,g,n_0):=\frac{\varepsilon^2}{18d\tilde g(\phi_3(\varepsilon^2/3d,n_0)-n_0)}$ for all\\ $n\in[n_0;\tilde g(\phi_3(\varepsilon^2/3d^2,n_0))-1]$
 \end{enumerate}
 imply $\Vert u_{\tilde g(\phi_3(\varepsilon^2/3d,n_0))}-u^*\Vert\le\varepsilon$, where $\tilde g(n):=\max\{n,g(n)\}$.
\end{lemma}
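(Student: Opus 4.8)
The plan is to abbreviate $a_n:=\Vert u_n-u^*\Vert^2$ and $m:=\tilde g(\phi_3(\varepsilon^2/3d,n_0))$, to establish a one-step estimate for the $a_n$ on the block $n_0\le n\le m-1$, and then to unroll it with Lemma~\ref{lem:last}. Writing $\lambda:=\lambda_{n+1}$ and $w:=T_{[n+1]}(u_n)$ we have $u_{n+1}=w+\lambda(\G w-w)$, and hence
\begin{equation*}
 \Vert u_{n+1}-u^*\Vert^2=\Vert w-u^*\Vert^2+2\lambda\langle w-u^*,\G w-w\rangle+\lambda^2\Vert\G w-w\Vert^2.
\end{equation*}
I would treat the middle inner product by the standard rearrangement behind the nonexpansiveness of metric projections: since $\langle w-u^*,\G w-w\rangle=\langle w-u^*,\G w-\G u^*\rangle+\langle w-u^*,\G u^*-u^*\rangle-\Vert w-u^*\Vert^2$, the $\tau$-contractivity of $\G$ bounds the first term by $\tau\Vert w-u^*\Vert^2$, while hypothesis~\ref{item:corefam2} is exactly the estimate $\langle w-u^*,\G u^*-u^*\rangle=\langle T_{[n+1]}(u_n)-u^*,\G u^*-u^*\rangle\le\varepsilon^2(1-\tau)/6$. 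Thus $\langle w-u^*,\G w-w\rangle\le-(1-\tau)\Vert w-u^*\Vert^2+\varepsilon^2(1-\tau)/6$, and, using $\Vert\G w-w\Vert\le\diam(C)\le d$, we obtain
\begin{equation*}
 \Vert u_{n+1}-u^*\Vert^2\le\bigl(1-2\lambda_{n+1}(1-\tau)\bigr)\Vert w-u^*\Vert^2+\tfrac{\lambda_{n+1}\varepsilon^2(1-\tau)}{3}+\lambda_{n+1}^2 d^2,
\end{equation*}
the one-step estimate the whole argument hinges on (compare Lemma~\ref{lem:contractive}).

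Next I would feed in the remaining two hypotheses. Nonexpansiveness of $T_{[n+1]}$ and hypothesis~\ref{item:corefam3} give $\Vert w-u^*\Vert\le\Vert u_n-u^*\Vert+\Omega_d(\varepsilon,g,n_0)$, hence $\Vert w-u^*\Vert^2\le a_n+3d\,\Omega_d(\varepsilon,g,n_0)$ (using $\Omega_d\le1\le d$). By hypothesis~\ref{item:corefam1} and the fact that $\chi$ is a rate of convergence for $(\lambda_n)$, one has $\lambda_i\le\varepsilon^2(1-\tau)/(12d^2)$ for all $i\ge n_0$; in particular $\gamma_n:=2\lambda_{n+1}(1-\tau)\in[0,1]$ on the entire block, and the last two displays combine into
\begin{equation*}
 a_{n+1}\le(1-\gamma_n)\,a_n+\tfrac{\gamma_n\varepsilon^2}{6}+3d\,\Omega_d(\varepsilon,g,n_0)+\lambda_{n+1}^2 d^2,\qquad n_0\le n\le m-1.
\end{equation*}

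Unrolling this recursion from $n_0$ to $m$ presents $a_m$ as a sum of four contributions, which I would bound one at a time. The homogeneous one is $\bigl(\prod_{i=n_0}^{m-1}(1-\gamma_i)\bigr)a_{n_0}\le\bigl(\prod_{i=n_0+1}^{m}(1-\lambda_i(1-\tau))\bigr)d^2$ (using $1-2x\le(1-x)^2\le1-x$ for $x\in[0,1]$ and $a_{n_0}\le d^2$), which the choice of $m$ makes $\le\varepsilon^2/3$ through the hypothesis on $\phi_3$. The contribution carrying $\gamma_n\varepsilon^2/6$ sums, by Lemma~\ref{lem:last} applied to $(\gamma_n)_{n\ge n_0}$, to at most $\varepsilon^2/6$. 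The contribution carrying $\lambda_{n+1}^2 d^2$ is first rewritten via $\lambda_{i+1}^2\le\lambda_{i+1}\cdot\varepsilon^2(1-\tau)/(12d^2)$ (hypothesis~\ref{item:corefam1} again) and then summed by the same lemma, costing at most a further $\varepsilon^2/24$. The last contribution, $3d\,\Omega_d(\varepsilon,g,n_0)$, is constant in $n$, hence at most $3d\,\Omega_d(\varepsilon,g,n_0)\cdot(m-n_0)$, which is precisely the quantity $\varepsilon^2/6$ built into the definition of $\Omega_d$. Adding the four bounds gives $a_m<\varepsilon^2$, hence $\Vert u_m-u^*\Vert\le\varepsilon$, as claimed.

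The real obstacle is the algebra of the first step, not the unrolling. The naive chain $\Vert u_{n+1}-u^*\Vert\le(1-\lambda_{n+1}(1-\tau))\Vert u_n-u^*\Vert+\lambda_{n+1}\Vert\G u^*-u^*\Vert+\cdots$ is worthless here because $u^*$ is not a fixed point of $\G$: the term $\Vert\G u^*-u^*\Vert$ does not decay, and, unrolled, it leaves behind a fixed multiple $\Vert\G u^*-u^*\Vert/(1-\tau)$, which is of order $d$. The rearrangement above is exactly what lets hypothesis~\ref{item:corefam2} stand in for $\Vert\G u^*-u^*\Vert$ while simultaneously leaving a second factor $\lambda_{n+1}$ on the only genuinely $O(d^2)$ remainder, so that hypothesis~\ref{item:corefam1} can absorb it; the residual $O(d\,\Omega_d)$ error --- constant along the block --- is disposed of only because $\Omega_d$ is defined with the block length $m-n_0$ in its denominator, which is exactly why the error modulus $\Omega_d$ has to depend on $g$ and $n_0$.
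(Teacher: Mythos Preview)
Your argument is correct; only the route to the one-step recursion differs from the paper's. The paper inserts the auxiliary point $T_{[n+1]}^{(\lambda_{n+1})}(u^*)$ and writes
\[
\Vert u_{n+1}-u^*\Vert^2=\bigl\Vert T_{[n+1]}^{(\lambda_{n+1})}(u_n)-T_{[n+1]}^{(\lambda_{n+1})}(u^*)\bigr\Vert^2+t_1+t_2+t_3,
\]
then bounds the first summand via Lemma~\ref{lem:contractive} by $(1-\lambda_{n+1}(1-\tau))^2\Vert u_n-u^*\Vert^2$ and estimates each cross term $t_i$ separately; the approximate-fixed-point error $\Vert T_{[n+1]}(u^*)-u^*\Vert$ enters through all three of $t_1,t_2,t_3$. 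You instead expand the convex combination $u_{n+1}=w+\lambda(\G w-w)$ directly, pull the contraction factor out of the inner product $\langle w-u^*,\G w-\G u^*\rangle\le\tau\Vert w-u^*\Vert^2$, and only afterwards pass from $\Vert w-u^*\Vert$ to $\Vert u_n-u^*\Vert$ using nonexpansiveness of $T_{[n+1]}$ and hypothesis~\ref{item:corefam3}. This yields the coefficient $1-2\lambda_{n+1}(1-\tau)$ in place of the paper's $(1-\lambda_{n+1}(1-\tau))^2$, which you then feed back into the product via $1-2x\le(1-x)^2\le1-x$. The unrolling step and the use of Lemma~\ref{lem:last} are identical in both proofs. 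Your decomposition is shorter and does not invoke Lemma~\ref{lem:contractive}, at the cost of carrying the slightly cruder constant $3d\,\Omega_d$ rather than $6d\,\Omega_d$ through the block sum---either way well within the $\varepsilon^2$ budget.
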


\begin{proof}
 Observe that 
 \begin{align*}
  \Vert u_{n+1}-u^*\Vert^2&=\left\langle T_{[n+1]}^{(\lambda_{n+1})}(u_n)-u^*,T_{[n+1]}^{(\lambda_{n+1})}(u_n)-u^*\right\rangle\\
			  &=\left\langle T_{[n+1]}^{(\lambda_{n+1})}(u_n)-T_{[n+1]}^{(\lambda_{n+1})}(u^*),T_{[n+1]}^{(\lambda_{n+1})}(u_n)-u^*)\right\rangle\\
			  &\quad+\left\langle T_{[n+1]}^{(\lambda_{n+1})}(u^*)-u^*,T_{[n+1]}^{(\lambda_{n+1})}(u_n)-u^*\right\rangle\\
			  &=\left\Vert T_{[n+1]}^{(\lambda_{n+1})}(u_n)-T_{[n+1]}^{(\lambda_{n+1})}(u^*)\right\Vert^2+\left\langle T_{[n+1]}^{(\lambda_{n+1})}(u^*)-u^*,T_{[n+1]}^{(\lambda_{n+1})}(u_n)-u^*\right\rangle\\
			  &\quad+\left\langle T_{[n+1]}^{(\lambda_{n+1})}(u_n)-T_{[n+1]}^{(\lambda_{n+1})}(u^*),T_{[n+1]}^{(\lambda_{n+1})}(u^*)-u^*\right\rangle\\
			  &=\left\Vert T_{[n+1]}^{(\lambda_{n+1})}(u_n)-T_{[n+1]}^{(\lambda_{n+1})}(u^*)\right\Vert^2\\
			  &\quad+\left\langle 2T_{[n+1]}^{(\lambda_{n+1})}(u_n)-T_{[n+1]}^{(\lambda_{n+1})}(u^*)-u^*,T_{[n+1]}^{(\lambda_{n+1})}(u^*)-u^*\right\rangle\\
			  &=\left\Vert T_{[n+1]}^{(\lambda_{n+1})}(u_n)-T_{[n+1]}^{(\lambda_{n+1})}(u^*)\right\Vert^2\\
			  &\quad+\lambda_{n+1}\left\langle 2\G T_{[n+1]}(u_n)-\G T_{[n+1]}(u^*)-u^*,T_{[n+1]}^{(\lambda_{n+1})}(u^*)-u^*\right\rangle\\
			  &\quad+(1-\lambda_{n+1})\left\langle 2T_{[n+1]}(u_n)-T_{[n+1]}(u^*)-u^*,T_{[n+1]}^{(\lambda_{n+1})}(u^*)-u^*\right\rangle\\
			  &=\left\Vert T_{[n+1]}^{(\lambda_{n+1})}(u_n)-T_{[n+1]}^{(\lambda_{n+1})}(u^*)\right\Vert^2+\underbrace{2\left\langle T_{[n+1]}(u_n)-u^*,T_{[n+1]}^{(\lambda_{n+1})}(u^*)-u^*\right\rangle}_{t_1:=}\\
			  &\quad+\underbrace{\left\langle u^*-T_{[n+1]}(u^*),T_{[n+1]}^{(\lambda_{n+1})}(u^*)-u^*\right\rangle}_{t_2:=}\\
			  &\quad+\lambda_{n+1}\Bigl\langle 2\G T_{[n+1]}(u_n)-2T_{[n+1]}(u_n)+T_{[n+1]}(u^*)-\G T_{[n+1]}(u^*),\\
			  &\quad\phantom{+}\underbrace{\hspace{8.5cm} T_{[n+1]}^{(\lambda_{n+1})}(u^*)-u^*\Bigr\rangle}_{t_3:=}.
 \end{align*}
 Observe that $T_{[n+1]}^{(\lambda_{n+1})}(u^*)-u^*=\lambda_{n+1}(\G T_{[n+1]}(u^*)-u^*)+(1-\lambda_{n+1})(T_{[n+1]}(u^*)-u^*)$. Therefore,
 \begin{align}\label{eq:t_1}
  t_1&=2\left\langle T_{[n+1]}(u_n)-u^*,T_{[n+1]}^{(\lambda_{n+1})}(u^*)-u^*\right\rangle\notag\\
  &=2\lambda_{n+1}\left\langle T_{[n+1]}(u_n)-u^*,\G T_{[n+1]}(u^*)-u^*\right\rangle\notag\\
  &\quad+2(1-\lambda_{n+1})\langle T_{[n+1]}(u_n)-u^*,T_{[n+1]}(u^*)-u^*\rangle\notag\\
  &\le2\lambda_{n+1}\langle  T_{[n+1]}(u_n)-u^*,\G T_{[n+1]}(u^*)-\G u^*\rangle+2\lambda_{n+1}\langle  T_{[n+1]}(u_n)-u^*,\G u^*-u^*\rangle\notag\\
  &\quad+2d(1-\lambda_{n+1})\cdot\Vert T_{[n+1]}(u^*)-u^*\Vert\notag\\
  &\le 2d\lambda_{n+1}\tau\Vert T_{[n+1]}(u^*)-u^*\Vert+2d(1-\lambda_{n+1})\Vert T_{[n+1]}(u^*)-u^*\Vert\notag\\
  &\quad+2\lambda_{n+1}\langle T_{[n+1]}(u_n)-u^*,\G u^*-u^*\rangle\notag\\
  &\le 2d\cdot\Vert T_{[n+1]}(u^*)-u^*\Vert+2\lambda_{n+1}\langle T_{[n+1]}(u_n)-u^*,\G u^*-u^*\rangle.
 \end{align}
 Moreover,
  \begin{align}\label{eq:t_2}
  t_2&=\left\langle u^*-T_{[n+1]}(u^*),T_{[n+1]}^{(\lambda_{n+1})}(u^*)-u^*\right\rangle\notag\\
     &\le d\cdot\Vert u^*-T_{[n+1]}(u^*)\Vert.
 \end{align}
 For the term $t_3$, we get the estimate
 \begin{align}\label{eq:t_3}
  t_3&=\lambda_{n+1}\left\langle 2\G T_{[n+1]}(u_n)-2T_{[n+1]}(u_n)+T_{[n+1]}(u^*)-\G T_{[n+1]}(u^*),T_{[n+1]}^{(\lambda_{n+1})}(u^*)-u^*\right\rangle\notag\\
  &\le2\lambda_{n+1}^2\left\langle \G T_{[n+1]}(u_n)-T_{[n+1]}(u_n),\G T_{[n+1]}(u^*)-u^*\right\rangle\notag\\
  &\quad+2d\lambda_{n+1}(1-\lambda_{n+1})\Vert T_{[n+1]}(u^*)-u^*\Vert\notag\\
  &\quad+\lambda_{n+1}^2\langle T_{[n+1]}(u^*)-\G T_{[n+1]}(u^*),\G T_{[n+1]}(u^*)-u^*\rangle\notag\\
  &\quad+d\lambda_{n+1}(1-\lambda_{n+1})\Vert T_{[n+1]}(u^*)-u^*\Vert\notag\\
  &\le3d^2\lambda_{n+1}^2+3d\lambda_{n+1}(1-\lambda_{n+1})\Vert T_{[n+1]}(u^*)-u^*\Vert\notag\\
  &\le3d^2\lambda_{n+1}^2+3d\lambda_{n+1}\Vert T_{[n+1]}(u^*)-u^*\Vert.
 \end{align}
 Combining the estimates for the terms $t_1$, $t_2$ and $t_3$, we obtain
 \begin{align*}
    \Vert u_{n+1}-u^*\Vert^2&\le\left\Vert T_{[n+1]}^{(\lambda_{n+1})}(u_n)-T_{[n+1]}^{(\lambda_{n+1})}(u^*)\right\Vert^2+(3d\lambda_{n+1}+3d)\Vert T_{[n+1]}(u^*)-u^*\Vert\\
    &\quad+3d^2\lambda_{n+1}^2+2\lambda_{n+1}\langle T_{[n+1]}(u_n)-u^*,\G u^*-u^*\rangle.
 \end{align*}
 Therefore, by Lemma \ref{lem:contractive}
 \begin{align*}
  \Vert u_{n+1}-u^*\Vert^2&\le(1-\lambda_{n+1}(1-\tau))^2\Vert u_n-u^*\Vert^2+(3d\lambda_{n+1}+3d)\Vert T_{[n+1]}(u^*)-u^*\Vert\\
    &\quad+3d^2\lambda_{n+1}^2+2\lambda_{n+1}\langle T_{[n+1]}(u_n)-u^*,\G u^*-u^*\rangle.\\
    &\le(1-\lambda_{n+1}(1-\tau))^2\Vert u_n-u^*\Vert^2+6d\cdot\Vert T_{[n+1]}(u^*)-u^*\Vert\\
    &\quad+3d^2\lambda_{n+1}^2+2\lambda_{n+1}\langle T_{[n+1]}(u_n)-u^*,\G u^*-u^*\rangle.
 \end{align*}
 Using now the hypotheses, we see that, for all $n\in[n_0;\tilde g(\phi_3(\varepsilon^2/3d^2),n_0)-1]$,
 \begin{align*}
  \Vert u_{n+1}-u^*\Vert^2\le(1-\lambda_{n+1}(1-\tau))\Vert u_n-u^*\Vert^2+6d\cdot\Omega_d(\varepsilon,g,n_0)+\lambda_{n+1}(1-\tau)\cdot\frac{\varepsilon^2}{3}.
 \end{align*}
Using induction, we then get (using the abbreviation $\tilde n_0=\tilde g(\phi_3(\varepsilon^2/3d^2),n_0)$) by Lemma \ref{lem:last}
\begin{align*}
 \Vert u_{\tilde n_0}-u^*\Vert^2&\le\Vert u_{{n_0}}-u^*\Vert^2\cdot\prod_{i={n_0}+1}^{\tilde n_0}(1-\tau(1-\lambda_i))+6d(\tilde n_0-n_0)\Omega_d(\varepsilon,g,{n_0})\\
 &\quad+\frac{\varepsilon^2}{3}\sum_{i={n_0}+1}^{\tilde n_0}\left(\lambda_i(1-\tau)\prod_{j=i+1}^{\tilde n_0}(1-\lambda_j(1-\tau))\right)\\
 &\le\Vert u_{n_0}-u^*\Vert^2\cdot\prod_{i={n_0}+1}^{\tilde n_0}(1-\tau(1-\lambda_i))+\frac{\varepsilon^2}{3}+6d(\tilde n_0-n_0)\Omega_d(\varepsilon,g,{n_0})\\
 &\le\frac{\varepsilon^2}3+\frac{\varepsilon^2}{3}+\frac{\varepsilon^2}{3}=\varepsilon^2.
\end{align*}
\end{proof}

\begin{theorem}\label{thm:mainquant2}
 Suppose $C$ is bounded with $\diam(C)\le d$, and suppose that $T_1,\ldots,T_N:C\to C$ are nonexpansive mappings with a common fixed point $p\in C$ that satisfy $\bigcap_{i=1}^N\fix(T_i)=\fix(T_N\cdots T_1)$. Suppose $\rho:\NN\times(0,\infty)\to(0,\infty)$ satisfies \eqref{eq:rhoperm} and let the moduli $\chi$, $\hat\chi$, $\phi_3$ and $\phi_4$ be as before.
 Then, for any $\tau$-contraction $\G:C\to C$, the iteration given by \eqref{eq:algoMult} is metastable with rate $\Xi(\varepsilon,g;\chi,\phi_3,\phi_4,d,\tau)$, i.e.
 \begin{equation*}
  \forall\varepsilon>0\,\forall g:\NN\to\NN\,\exists n\le\Xi(\varepsilon,g;\chi,\phi_3,\phi_4,d,\tau)\left(\Vert u_n-u_{\tilde g(n)}\Vert\le\varepsilon\right)
 \end{equation*}
where $\tilde g(n):=\max\{n,g(n)\}$ and $\Xi$ is defined by
 \begin{equation*}
  \Xi(\varepsilon,g;\chi,\phi_3,\phi_4,d,\tau):=\phi_3'\left(\varepsilon^2/12d^2,\max\left\{n_0,\hat\chi\left(\rho\left(d,\frac{1}{k_{i_0}(\tilde f)N}\right)\right)\right\}\right),
 \end{equation*}
 where $\phi_3'(\varepsilon,i):=\max\{n,\max\{\phi_3(\varepsilon,i):i\le n\}\}$ and $k_{i_0}(\tilde f)$ is as in Theorem \ref{thm:main} except for $f$ and $\varepsilon_d$, which are now defined as
 \begin{align*}
     f(k)&:=\rho\left(d,\Omega_d^M(\varepsilon/2,\tilde g^M,\max\{n_0,\hat\chi(\rho(d,1/Nk))\}\right)\\
     n_0&:=\max\{\chi(\lceil96d/(1-\tau)\varepsilon^2\rceil),\chi(\lceil48d^2/(1-\tau)\varepsilon^2\rceil)\}\\
   \varepsilon_d&:=\frac{((1-\tau)\varepsilon^2/96)^2}{2d^2}\\
   \Omega_d^M(\varepsilon,g,n)&:=\max\left\{\Omega_d(\varepsilon,g,i):i\le n\right\}.
  \end{align*}
\end{theorem}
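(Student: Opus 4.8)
The plan is to transplant the argument of Theorems~\ref{thm:main}--\ref{thm:mainquant} to the finite-family setting, with the resolvent $(v_n)$ replaced by the iteration $(u_n)$ itself and the near-fixed-point property of the resolvent replaced by the asymptotic regularity of $(u_n)$ along the cyclic compositions $T_{\pi(N)}\cdots T_{\pi(1)}$ supplied by Corollary~\ref{thm:asy2}. Since $\bigcap_{i=1}^N\fix(T_i)=\fix(T_N\cdots T_1)$, the metric projection in play is onto $\fix(T_N\cdots T_1)$, so one invokes Problem~\ref{prob:proj} and Theorem~\ref{thm:proj} with $T:=T_N\cdots T_1$. Applying Theorem~\ref{thm:proj} to an arbitrary base point with counterfunctions $(\Delta,V)$ to be specified produces an approximate projection point $u^*$ together with an accuracy $\varphi:C\to(0,1]$ whose majorant $K:=k_{i_0}(\tilde f)$ is read off from a majorant $f$ of $\Delta$ alone; $u^*$ is then fed into Lemma~\ref{lem:core}, and the resulting closeness estimate is repackaged into the metastability witness via Lemma~\ref{lem:subseq}, exactly as in Theorem~\ref{thm:main}.

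Concretely, I would apply Lemma~\ref{lem:core} with parameter $\varepsilon':=\varepsilon/2$, start parameter $n_0^\ast:=\max\{n_0,\hat\chi(\rho(d,1/(KN)))\}$, and sequence-function equal to the Lemma~\ref{lem:subseq}-selector $\tilde g_{u^*,\varepsilon}$ built from $\tilde g(n):=\max\{n,g(n)\}$; since $\tilde g_{u^*,\varepsilon}(n)\ge n$ always, the ``$\tilde g$'' appearing internally in the window and conclusion of Lemma~\ref{lem:core} is again $\tilde g_{u^*,\varepsilon}$. Hypothesis~\ref{item:corefam1} is the inequality $n_0\ge\chi(\lceil 48d^2/((1-\tau)\varepsilon^2)\rceil)$ built into $n_0$, and the second summand $\chi(\lceil 96d/((1-\tau)\varepsilon^2)\rceil)$ of $n_0$ keeps $\lambda_n$ small enough for the near-fixed-point transfers below. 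Hypothesis~\ref{item:corefam3}, i.e.\ $\Vert T_iu^*-u^*\Vert\le\Omega_d(\varepsilon',\tilde g_{u^*,\varepsilon},n_0^\ast)$ for all $i\le N$, is forced by choosing $\Delta$ so tight that (up to reciprocals) its majorant is the displayed $f$ and $\Vert u^*-T_N\cdots T_1u^*\Vert<\Delta(u^*,\varphi)\le\rho(d,\Omega_d(\varepsilon',\tilde g_{u^*,\varepsilon},n_0^\ast))$; plugging this into \eqref{eq:rhoperm} with $\pi=\mathrm{id}$ yields \ref{item:corefam3}. For hypothesis~\ref{item:corefam2}, i.e.\ $\langle T_{[n+1]}(u_n)-u^*,\G u^*-u^*\rangle\le(1-\tau)\varepsilon'^2/6$ for all $n$ in the window $[n_0^\ast;\tilde g_{u^*,\varepsilon}(\phi_3(\varepsilon'^2/3d,n_0^\ast))-1]$, I would let $V(u,\varphi)$ scan this (now fixed) window, discard every $n$ for which $T_{[n+1]}(u_n)$ is not a $\varphi$-approximate fixed point of $T_N\cdots T_1$, and return the surviving $T_{[n+1]}(u_n)$ that maximises $\langle\G u-u,T_{[n+1]}(u_n)-u\rangle$. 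By Corollary~\ref{thm:asy2}, Lemma~\ref{lem:verylast} and nonexpansiveness of $T_N\cdots T_1$, the shift past $\hat\chi(\rho(d,1/(KN)))$ makes every $n\ge n_0^\ast$ in the window survive, so $V(u^*,\varphi)$ is a genuine $\varphi$-approximate fixed point and the implication in the conclusion of Theorem~\ref{thm:proj} fires; Lemma~\ref{lem:switch}, with the switch parameter $t$ and $\varepsilon_d=((1-\tau)\varepsilon^2/96)^2/(2d^2)$ calibrated to match, then gives $\langle\G u^*-u^*,V(u^*,\varphi)-u^*\rangle<(1-\tau)\varepsilon^2/96$, and maximality of $V(u^*,\varphi)$ transports this bound to $\langle\G u^*-u^*,T_{[n+1]}(u_n)-u^*\rangle$ for every $n$ in the window, which is \ref{item:corefam2}.

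With all three hypotheses verified, Lemma~\ref{lem:core} yields $\Vert u_{\tilde g_{u^*,\varepsilon}(\phi_3(\varepsilon'^2/3d,n_0^\ast))}-u^*\Vert\le\varepsilon/2$. Putting $j:=\phi_3(\varepsilon'^2/3d,n_0^\ast)$, this is precisely the antecedent of Lemma~\ref{lem:subseq} applied to the sequence $(u_n)$ (with $u^*$ for $u$, $\tilde g$ for $g$, threshold $\varepsilon/2$), so $\Vert u_j-u_{\tilde g(j)}\Vert\le\varepsilon$; and, after the harmless assumption that $\phi_3$ is non-increasing in its first argument and using $d\ge1$ and $n_0^\ast\ge n_0$, $j=\phi_3(\varepsilon^2/12d,n_0^\ast)\le\phi_3'(\varepsilon^2/12d^2,n_0^\ast)=\Xi(\varepsilon,g;\chi,\phi_3,\phi_4,d,\tau)$, which is the claim.

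The step I expect to be the main obstacle is the simultaneous calibration of the counterfunction pair $(\Delta,V)$. On the one hand $\Delta$ must be tight enough that \eqref{eq:rhoperm} delivers Lemma~\ref{lem:core}\ref{item:corefam3} at the accuracy $\Omega_d$, yet its majorant $f$ must remain explicit enough that $K=k_{i_0}(\tilde f)$ can be fed back into $\hat\chi(\rho(d,1/(KN)))$ to pin down the window-start $n_0^\ast$ without circular dependence; this is exactly why $f$ is written with $\hat\chi(\rho(d,1/(Nk)))$ occurring inside the argument $k$. On the other hand $V$ is a single point that, through Theorem~\ref{thm:proj} and Lemma~\ref{lem:switch}, must dominate the variational-inequality inner product over an entire window of indices whose length is itself dictated by $g$ and $\phi_3$ -- forcing the window-maximisation device and the push of $n_0^\ast$ far enough out that asymptotic regularity makes every index in the window an admissible candidate for $V$. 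Finally, unlike in Theorem~\ref{thm:main}, this window-maximising $V$ cannot simultaneously carry the Lemma~\ref{lem:subseq}-repackaging, so that repackaging has to be routed instead through the sequence-function parameter of Lemma~\ref{lem:core} (the substitution $g\mapsto\tilde g_{u^*,\varepsilon}$); checking that the two roles of $\tilde g_{u^*,\varepsilon}$ are mutually consistent is the one genuinely new piece of bookkeeping relative to the single-mapping proof.
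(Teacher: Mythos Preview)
Your proposal is correct and follows essentially the same approach as the paper: set $T:=T_N\cdots T_1$, feed suitably chosen counterfunctions $(\Delta_{\varepsilon,g},V_{\varepsilon,g})$ into Theorem~\ref{thm:proj}, verify the three hypotheses of Lemma~\ref{lem:core} for the resulting $\tilde u$ over a window whose start is governed by a $\varphi$-dependent asymptotic-regularity threshold $J_\varepsilon(\varphi)$ (majorized by your $n_0^\ast$), and repackage via Lemma~\ref{lem:subseq}. The only cosmetic difference is that the paper lets $V_{\varepsilon,g}(u,\varphi)$ return the iterate $u_{i(u)+1}$ that \emph{minimises} $\Vert\G u-(1-t)u-tu_k\Vert$ over the window and transfers the inner-product bound to $T_{[k+1]}(u_k)$ afterwards via Lemma~\ref{lem:verylast}; this makes the premise $\Vert TV-V\Vert<\tilde\varphi(V)$ of Theorem~\ref{thm:proj} fire directly from the definition of $J_\varepsilon$, whereas your choice $V=T_{[n+1]}(u_n)$ would require a small extra shift to absorb the $2\Vert u_{n+1}-T_{[n+1]}(u_n)\Vert$ error in that premise. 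Your identification of the $k$-dependence of $f$ as the device that breaks the circularity between $\Delta$ and the window start is exactly the point, and is implemented in the paper by making the window start $J_\varepsilon(\varphi)$ depend on $\varphi$ rather than on the fixed bound $n_0^\ast$.
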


\begin{proof}
 Define $T:=T_N\cdots T_1$ and 
 \begin{equation*}
  J_{\varepsilon}(\varphi):=\min\left\{l\ge n_0:\Vert Tu_k-u_k\Vert\le\varphi(v)\text{ for all }v\in C\text{ and }k\ge l\right\}.
 \end{equation*}
 For majorizable $\varphi:C\to(0,1]$, this is well-defined, and by Corollary \ref{thm:asy2}, 
 \begin{equation}\label{eq:jj}
  k\maj\varphi\ra J_{\varepsilon}(\varphi)\le\max\left\{n_0,\hat\chi(\rho(d,1/Nk)\right\}.
 \end{equation}
 Now define the counterfunction $V_{\varepsilon,g}(u,\varphi)=u_{i(u)+1}$, where $i(u)$ is defined as the least index $i\in[J_{\varepsilon}(\varphi)-1,\tilde g_{u,\varepsilon}(\phi_3(\varepsilon^2/12d^2,J_{\varepsilon}(\varphi)))-2]$ such that for all integers $k\in[J_{\varepsilon(\varphi)}-1,\tilde g_{u,\varepsilon}(\phi_3(\varepsilon^2/12d^2,J_{\varepsilon}(\varphi))-2]$
 \begin{equation*}
  \Vert\G u-(1-t)u-tu_i\Vert\le\Vert\G u-(1-t)u-tu_k\Vert.
 \end{equation*}
 Moreover, if we define $\Delta_{\varepsilon,g}(u,\varphi):=\rho\left(d,\Omega_d(\varepsilon/2,\tilde g_{u,\varepsilon},J_{\varepsilon}(\varphi))\right)$, then, given a majorant $k\maj\varphi$, \eqref{eq:jj} implies $\rho\left(d,\Omega_d^M\left(\varepsilon/2,\tilde g^M,\max\{n_0,\hat\chi\left(\frac{1}{k}\right)\}\right)\right)\ge\Delta_{\varepsilon,g}(u,\varphi)$ for all $u\in C$. Therefore, $f\maj\Delta_{\varepsilon,g}$. We again write $\tilde u:=U(\varepsilon_d,t,\Delta_{\varepsilon,g},V_{\varepsilon,g})$ and $\tilde\varphi:=\Phi(\varepsilon_d,t,\Delta_{\varepsilon,g},V_{\varepsilon,g})$, where $t:=\frac{((1-\tau)\varepsilon^2/96)^2}{3d^2}$.
 
 Then,
 \begin{equation}\label{eq:fix3}
  \Vert T\tilde u-\tilde u\Vert<\rho\left(d,\Omega_d(\varepsilon/2,\tilde g_{\tilde u,\varepsilon},J_{\varepsilon}(\tilde\varphi))\right)
 \end{equation}
 and
 \begin{equation*}
  \Vert Tu_{i(\tilde u)+1}-u_{i(\tilde u)+1}\Vert<\tilde\varphi(u_{i(\tilde u)+1})\ra\Vert\G\tilde u-\tilde u\Vert^2<\Vert\G\tilde u-(1-t)\tilde u-tu_{i(\tilde u)+1}\Vert^2+\varepsilon_d
 \end{equation*}
 By construction, $\Vert Tu_{i(\tilde u)+1}-u_{i(\tilde u)+1}\Vert<\tilde\varphi(u_{i(\tilde u)+1})$, so we conclude $\Vert\G\tilde u-\tilde u\Vert^2<\Vert\G\tilde u-(1-t)\tilde u-tu_{i(\tilde u)+1}\Vert^2+\varepsilon_d$. By construction, we also have $\Vert\G\tilde u-(1-t)\tilde u-tu_{i(\tilde u)+1}\Vert\le\Vert\G\tilde u-(1-t)\tilde u-tu_{k+1}\Vert$ for all $k\in[J_{\varepsilon}(\tilde\varphi),\tilde g_{\tilde u,\varepsilon}(\phi_3(\varepsilon^2/12d^2,J_{\varepsilon}(\tilde\varphi))-1]$. Therefore,
 \begin{equation*}
  \Vert\G\tilde u-\tilde u\Vert^2<\Vert\G\tilde u-(1-t)\tilde u-tu_{k+1}\Vert^2+\varepsilon_d,\quad\text{for all }k\in[J_{\varepsilon}(\tilde\varphi),\tilde g_{\tilde u,\varepsilon}(\phi_3(\varepsilon^2/12d^2,J_{\varepsilon}(\tilde\varphi))-1].
 \end{equation*}
 Therefore, Lemma \ref{lem:switch} implies $\left\langle\G\tilde u-\tilde u,u_{k+1}-\tilde u\right\rangle\le(1-\tau)\varepsilon^2/96$ for all nonnegative integers $k\in[J_{\varepsilon}(\tilde\varphi),\tilde g_{\tilde u,\varepsilon}(\phi_3(\varepsilon^2/12d^2,J_{\varepsilon}(\tilde\varphi))-1]$. Consequently, since by construction\\ $J_{\varepsilon}(\tilde\varphi)\ge\max\left\{\chi(\lceil96d/(1-\tau)\varepsilon^2\rceil),\chi(\lceil48d^2/(1-\tau)\varepsilon^2\rceil)\right\}$,
 \begin{equation*}
  \langle\G\tilde u-\tilde u,T_{[k+1]}(u_k)-\tilde u\rangle\le\langle\G\tilde u-\tilde u,u_{k+1}-\tilde u\rangle+d\cdot\Vert T_{[k+1]}(u_k)-u_{k+1}\Vert\le\frac{(\varepsilon/2)^2(1-\tau)}{12},
 \end{equation*}
 and
 \begin{equation*}
  J_{\varepsilon}(\tilde\varphi)\ge\chi\left(\left\lceil\frac{12}{(\varepsilon/2)^2(1-\tau)}\right\rceil\right).
 \end{equation*}
 Moreover, \eqref{eq:rhoperm} and \eqref{eq:fix3} imply $\Vert T_{[k+1]}(\tilde u)-\tilde u\Vert<\Omega_d^M(\varepsilon/2,\tilde g_{\tilde u,\varepsilon},J_{\varepsilon}(\tilde\varphi))$ for all nonnegative integers $k$. Therefore, Lemma \ref{lem:core} implies
 \begin{equation*}
  \Vert u_{\tilde g_{\tilde u,\varepsilon}(\phi_3(\varepsilon^2/12d^2,J_{\varepsilon}(\tilde\varphi))}-\tilde u\Vert\le\frac{\varepsilon}{2}.
 \end{equation*}
 Therefore, for $k:=\phi_3(\varepsilon^2/12d^2,J_{\varepsilon}(\tilde\varphi))$, Lemma \ref{lem:subseq} yields
 \begin{equation*}
  \Vert u_k-u_{\tilde g(k)}\Vert\le\varepsilon.
 \end{equation*}
\end{proof}

As before, one can weaken the assumption that $C$ is bounded as follows:
\begin{corollary}\label{cor:analogo}
 For all of the results in this section, one can drop the condition of $C$ being bounded with $\diam(C)\le d$ in favor of  $\Vert u_0-v\Vert\le d/4$, $\Vert v-\G v\Vert\le\frac{d(1-\tau)}{4}$ and $\Vert v-w\Vert\le\frac{d}{4(1+\tau)}$, where $v$ is a common fixed point of the $T_i$ and $w$ is the unique fixed point of $\G$.
\end{corollary}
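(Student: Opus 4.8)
The plan is to run the argument of Corollary \ref{cor:diam} in the multi-map setting: under the three displayed hypotheses every point that any proof of this section ever feeds into an estimate of the form ``$\Vert\cdot-\cdot\Vert\le d$'' will be shown to lie in a fixed ball of radius $\le d/2$ about the common fixed point $v$ of $T_1,\ldots,T_N$, and $\diam(C)\le d$ enters the section only through such estimates.

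First I would bound the iterates. By Lemma \ref{lem:contractive}, whose proof only uses nonexpansiveness of the map together with $\tau$-contractivity of $\G$, each $T_i^{(\lambda)}$ is a $(1-\lambda(1-\tau))$-contraction, and since $v\in\fix(T_i)$ we have $T_i^{(\lambda)}(v)=(1-\lambda)v+\lambda\G v$, whence $\Vert T_i^{(\lambda)}(v)-v\Vert=\lambda\Vert\G v-v\Vert$. Therefore
\[
\Vert u_{n+1}-v\Vert\le(1-\lambda_{n+1}(1-\tau))\Vert u_n-v\Vert+\lambda_{n+1}(1-\tau)\cdot\tfrac{\Vert\G v-v\Vert}{1-\tau},
\]
a convex combination of $\Vert u_n-v\Vert$ and $\Vert\G v-v\Vert/(1-\tau)\le d/4$; as $\Vert u_0-v\Vert\le d/4$, induction gives $\Vert u_n-v\Vert\le d/4$ for all $n$. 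By nonexpansiveness of the $T_i$ the same bound holds for $T_{[n+1]}(u_n)$, and for any $x$ with $\Vert x-v\Vert\le d/4$, using $\G w=w$ and $\Vert v-w\Vert\le d/(4(1+\tau))$,
\[
\Vert\G x-v\Vert\le\tau\Vert x-v\Vert+(1+\tau)\Vert v-w\Vert\le\tfrac{\tau d}{4}+\tfrac{d}{4}=\tfrac{(1+\tau)d}{4}<\tfrac d2,
\]
which in particular covers $\G u_n$ and $\G T_{[n+1]}(u_n)$.

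Next I would locate the auxiliary points. Taking the free fixed point $\hat u$ in the explicit construction of the solution operator of Lemma \ref{lem:proj} to be $\hat u:=v$ (legitimate since $\fix(T_N\cdots T_1)=\bigcap_i\fix(T_i)\ni v$, and the points produced there are convex combinations of $\hat u$ and counterfunction values, with the reference point $v_0$ affecting only which index is selected), every point generated by that construction, and hence by those of Lemma \ref{lem:proj_con} and Theorem \ref{thm:proj}, is a convex combination of $v$ and of values of the supplied counterfunctions; in the application inside Theorem \ref{thm:mainquant2} these counterfunction values are themselves iterates $u_k$ (through $V_{\varepsilon,g}$) or, recursively, such combinations. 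Hence $\tilde u$, all intermediate points, and all convex combinations $(1-t)\tilde u+tu_{k+1}$ appearing in the proofs lie in the ball of radius $d/4$ about $v$, while $\G\tilde u$ lies in the ball of radius $(1+\tau)d/4$; also the successive reference points $\G u_{i-1}$ lie in the ball of radius $(1+\tau)d/4$, and the initial reference point may be taken to be $v$ or $u_0$, at distance $\le d/4$ from $v$. Consequently every difference to which $\diam(C)\le d$ was applied --- in Lemmas \ref{lem:switch}, \ref{lem:verylast}, \ref{lem:core}, in Theorems \ref{thm:perm}, \ref{thm:asy}, \ref{thm:proj}, \ref{thm:mainquant2} and their corollaries (e.g.\ $\Vert T_{[n+1]}(u_n)-\tilde u\Vert$, $\Vert\tilde u-u_{k+1}\Vert$, $\Vert\G\tilde u-((1-t)\tilde u+tu_{k+1})\Vert$, $\Vert T_{[n+1]}(u_n)-\G T_{[n+1]}(u_n)\Vert$, and $\Vert x-p\Vert$ entering the modulus $\rho$) --- has norm at most $(2+\tau)d/4<d$. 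Since $\diam(C)\le d$ enters the section only through such quantities, all of its results remain valid as stated, proving the corollary.

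The main obstacle is the bookkeeping in the third step: one must unwind the nested definitions of $u_i,\varphi_i,V_i,\Delta_i$ and of $u',\varphi'$ from Lemma \ref{lem:proj_con} far enough to confirm that no point ever escapes the convex hull of $\{v\}\cup\{u_k:k\in\NN\}$ (together with the finitely many $\G$-images of points thereof). Once this is granted, the norm bounds are literally those of Corollary \ref{cor:diam} with $d/2$ replaced by $(1+\tau)d/4$, still below $d$ since $\tau<1$.
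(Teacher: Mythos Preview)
Your proposal is correct and follows essentially the same approach as the paper: bound $\Vert u_n-v\Vert\le d/4$ by induction via Lemma~\ref{lem:contractive}, bound $\Vert\G x-v\Vert$ through the fixed point $w$ of $\G$, and then observe that every point to which the bound $\diam(C)\le d$ was ever applied lies in (or is a convex combination of points in) the resulting ball of radius $d/2$ about $v$. The paper's proof is terser, simply asserting that the relevant points are elements of $(u_n)$, $(\G u_n)$, or convex combinations thereof; your added bookkeeping about $T_{[n+1]}(u_n)$, $\G T_{[n+1]}(u_n)$, and the auxiliary points produced by the solution operators (taking $\hat u:=v$) makes explicit what the paper leaves implicit, but the argument is the same.
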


\begin{proof}
 Similarly to the situation before, Lemma \ref{lem:contractive} implies for all nonnegative integers $n$
 \begin{align*}
  \Vert u_{n+1}-v\Vert&\le\Vert T_{[n+1]}^{(\lambda_{n+1})}(u_n)-T_{[n+1]}^{(\lambda_{n+1})}(v)\Vert+\Vert T_{[n+1]}^{(\lambda_{n+1})}(v)-v\Vert\\
		      &\le(1-\lambda_{n+1}(1-\tau))\Vert u_n-v\Vert+\lambda_{n+1}(1-\tau)\cdot\frac{\Vert \G v-v\Vert}{1-\tau}.
 \end{align*}
 Since $\Vert u_0-v\Vert\le d/4$, we conclude by induction that $\Vert u_n-v\Vert\le d/4$. Moreover,
 \begin{equation*}
  \Vert\G u_n-v\Vert\le\Vert \G u_n-\G w\Vert+\Vert w-v\Vert\le\tau\Vert u_n-w\Vert+\Vert v-w\Vert\le\tau\Vert u_n-v\Vert+(1+\tau)\Vert v-w\Vert.
 \end{equation*}
 Consequently, both $(u_n)$ and $(\G u_n)$ remain in the closed ball of radius $d/2$, and hence of diameter $d$, centered at $v$. Since all points for which the condition $\diam(C)$ were either elements of the sequences $(u_n)$ and $(\G u_n)$, or convex combinations thereof, the claim follows.
\end{proof}

\noindent{\textbf{Acknowledgement:} I wish to thank Professor Genaro López Acedo for his invitation to the ``Instituto Universitario de Investigación de Mathemáticas'' at the University of Seville and the numerous insightful discussions.

Moreover, I wish to thank Professor Ulrich Kohlenbach for suggesting this paper as the subject of a proof-theoretic analysis.

\end{document}